\title[Relative property A]{Relative property A and relative amenability for countable groups}
\author{Ronghui Ji}
\address{
		Department of Mathematical Sciences\\
		Indiana University-Purdue University, Indianapolis\\ 
		402 N. Blackford Street\\
		Indianapolis, IN 46202\\
		}
\email{ronji@math.iupui.edu}
\author{Crichton Ogle}
\address{
		Department of Mathematics\\
		The Ohio State University\\
        Columbus, OH 43210\\
        }
\email{ogle@math.ohio-state.edu}
\author{Bobby W. Ramsey}
\address{
		Department of Mathematics\\
		The Ohio State University\\
        Columbus, OH 43210\\
        }
\email{ramsey.313@math.osu.edu}
\newtheorem*{thm*}{Theorem}
\newtheorem*{cor*}{Corollary}
\newtheorem{thm}{Theorem}[section]
\newtheorem{cor}[thm]{Corollary}
\newtheorem{prop}[thm]{Proposition}
\newtheorem{lem}[thm]{Lemma}
\theoremstyle{definition}
\newtheorem*{rmk*}{Remark}
\newtheorem{defn}[thm]{Definition}
\newcommand{\N}{{\mathbb{N}}}
\newcommand{\BG}{{\beta G}}
\newcommand{\CG}{{\mathbb{C}G}}
\newcommand{\isom}{{\,\cong\,}}
\newcommand{\E}{\mathcal{E}}
\newcommand{\V}{\mathcal{V}}
\newcommand{\im}{\operatorname{im}}
\newcommand{\mH}{\mathcal{H}}
\newcommand{\I}{\mathcal{I}}
\newcommand{\Ext}{\operatorname{Ext}}
\newcommand{\Hom}{\operatorname{Hom}}
\newcommand{\bExt}{\operatorname{bExt}}
\newcommand{\bHom}{\operatorname{bHom}}
\newcommand{\Prob}{\operatorname{Prob}}
\newcommand{\normal}{\triangleleft}
\begin{document}

\begin{abstract}
We define a relative property A for a countable group with respect to
a finite family of subgroups. Many characterizations for relative property A  are given. In particular a relative bounded cohomological 
characterization shows that  if $G$ has property A relative to a family of subgroups $\mH$ and if each $H\in \mH$ has property A, 
then $G$ has property A. This result leads to new classes of groups that have property A. In particular, groups are of property A 
if they act cocompactly on locally finite property A spaces of  bounded geometry with any stabilizer of property A. Specializing the 
definition of relative property A, an analogue definition of relative amenability for discrete groups are introduced and similar results are obtained.
\end{abstract}   

\maketitle

\tableofcontents

\section{Introduction}
Property A is a geometric condition on metric spaces introduced by Yu \cite{yu2} for studying 
coarse embeddability into Hilbert spaces.  It amounts to a non-equivariant generalization of amenability.  Among many important consequences of property A are the validity of the Coarse Baum-Connes Conjecture (which, in turn, implies the Strong Novikov Conjecture), the Gromov-Lawson-Rosenberg Conjecture on the existence of positive scalar curvature, and Gromov's zero-in-the-spectrum conjecture. It is the work of Higson and Roe \cite{HR} that characterized property A in terms of analytical conditions, which are similar to classical conditions characterizing amenable groups. Since amenable groups are of property A, and they can also be characterized by vanishing of certain bounded cohomology groups, Higson proposed to characterize property A groups by cohomological conditions. Several related results in this direction have recently appeared in \cite{BNNW}, \cite{BNW}, and \cite{DN}. In \cite{BNW}  Brodzki, Niblo, and Wright give a characterization of property A for metric spaces
via the construction of an asymptotically invariant cohomology theory, while in \cite{DN} Douglas and Nowak give a partial characterization of property A in terms of bounded cohomology. The program in \cite{DN} was completed by Brodzki, Niblo, Nowak, and Wright in \cite{BNNW} using bounded cohomology with certain coefficients.  In particular, they show that 
$G$ has property A if and only if for every appropriate coefficient module $\E$, $H^q_{b}(G;\E^*) = 0$ 
for $q \geq 1$.  Monod has obtained a similar characterization in \cite{Mo2}, where property A is further shown to be
equivalent to the relative injectivity of a class of Banach modules associated to the group.

In the current paper, we expand upon the methods of \cite{BNNW} by examining the relative bounded cohomology of the 
countable discrete group $G$ with respect to a finite family of 
subgroups, $\mH$, denoted by $H^*_{b}(G, \mH; \V)$.  For each choice of coefficient module, there is
a long-exact sequence relating $H^*_{b}(G;\V)$, $H^*_{b}(G,\mH;\V)$, and the bounded cohomology of
the subgroups \cite{JOR}.

We then define relative property A for $G$ equipped with a left-invariant proper metric
relative to a finite family of subgroups $\mH$.  This is given as a relative version of Yu's original
definition.  Several equivalent formulations of relative property A are developed, including analogues of
amenable actions on a compact space and Reiter's condition.  Most importantly, $G$ has relative property A with respect
to $\mH$, if and only if for any $\ell^1$-geometric $G$-$C(X)$ module $\E$, the map
$H^0_{b}(\mH;\E^*) \to  H^1_{b}(G,\mH;\E^*)$ is surjective.  In conjunction with the results of \cite{BNNW}, 
we arrive at the following theorem.
\begin{thm*}
Suppose that the countable discrete group $G$ has relative property A with respect to the finite family of subgroups $\mH$.
Then $G$ has property A if and only if each of the subgroups $H \in \mH$ has property A.
\end{thm*}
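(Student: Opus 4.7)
The plan is to treat the two implications separately. For the direction ``$G$ has property A $\Rightarrow$ each $H \in \mH$ has property A'', I would invoke the standard fact that property A descends to subgroups of a countable group equipped with the induced proper left-invariant metric, since such subgroups are coarsely embedded and property A passes to coarsely embedded subspaces of bounded geometry. No use of the relative property A hypothesis is required here.

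For the converse, assume each $H \in \mH$ has property A. By the \cite{BNNW} characterization applied to each subgroup, $H^q_b(H;\E^*) = 0$ for every $\ell^1$-geometric coefficient module $\E$ and every $q \geq 1$; combining over the finite family gives $H^q_b(\mH;\E^*) = 0$ for $q \geq 1$. The plan is then to feed this vanishing into the long exact sequence for the pair $(G,\mH)$ from \cite{JOR},
\[
\cdots \to H^{q-1}_b(\mH;\E^*) \to H^q_b(G,\mH;\E^*) \to H^q_b(G;\E^*) \to H^q_b(\mH;\E^*) \to \cdots,
\]
and combine it with the degree-one characterization of relative property A stated earlier in the introduction.

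In degree $q = 1$, surjectivity of the connecting map $H^0_b(\mH;\E^*) \to H^1_b(G,\mH;\E^*)$ forces the next map $H^1_b(G,\mH;\E^*) \to H^1_b(G;\E^*)$ to be zero by exactness; coupled with $H^1_b(\mH;\E^*) = 0$, a short diagram chase yields $H^1_b(G;\E^*) = 0$. For $q \geq 2$ both flanking terms contributed by the subgroups vanish, so the exact sequence identifies $H^q_b(G;\E^*)$ with $H^q_b(G,\mH;\E^*)$, and the task reduces to showing the relative bounded cohomology vanishes in degrees $\geq 2$. Here I would use a dimension-shifting argument: embed $\E^*$ in a relatively injective Banach $G$-module $I$, form the short exact sequence $0 \to \E^* \to I \to I/\E^* \to 0$, and use the induced long exact sequence to reduce $H^q_b(G,\mH;\E^*)$ to $H^{q-1}_b(G,\mH;I/\E^*)$, iterating down to the degree-one case already handled.

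The main obstacle is precisely this last reduction: verifying that the class of admissible coefficient modules (duals of $\ell^1$-geometric $G$-$C(X)$ modules) is closed under the quotients arising from embedding into a relatively injective resolution, so that the degree-one vanishing propagates to all higher degrees. Once this is in place, $H^q_b(G;\E^*) = 0$ for all $q \geq 1$ and all admissible $\E$, and \cite{BNNW} delivers property A for $G$.
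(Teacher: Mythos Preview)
Your degree-one argument is exactly the paper's proof: use the long exact sequence for $(G,\mH)$, combine the surjectivity of $H^0_b(\mH;\E^*)\to H^1_b(G,\mH;\E^*)$ coming from relative property~A with the vanishing $H^1_b(\mH;\E^*)=0$ coming from property~A of each $H\in\mH$, and conclude $H^1_b(G;\E^*)=0$ for every $\ell^1$-geometric $G$-$C(X)$ module $\E$. The paper then invokes Theorem~\ref{thm:BNNW} and stops.

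The extra work you propose for $q\geq 2$ is unnecessary, and the ``main obstacle'' you flag is a phantom. Look again at the statement of the \cite{BNNW} theorem: its three conditions are \emph{equivalent}, and condition~(2) is purely a degree-one condition---the vanishing of the single class $[J]\in H^1_b(G;N_0(G,X)^{**})$. Since $N_0(G,X)^{**}$ is the dual of an $\ell^1$-geometric $G$-$C(X)$ module, your degree-one vanishing already kills $[J]$, so (2) holds, hence (1) holds (the action is topologically amenable), hence $G$ has property~A. Equivalently, (2)$\Rightarrow$(3) in \cite{BNNW} already contains the dimension-shifting you were trying to redo by hand, and does so without any hypothesis on closure of the coefficient class under quotients. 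So drop everything after the $q=1$ paragraph and you have the paper's proof verbatim.
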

In particular, this generalizes the theorems of Ozawa \cite{Oz} and Dadarlat-Guentner \cite{DG} for relatively hyperbolic groups,
and is similar in spirit to Osin's results on finite asymptotic dimension \cite{Os}.

The following theorem describes conditions that ensure relative property A.

\begin{thm*}
Suppose that a finitely generated discrete group $G$ acts cocompactly on a uniformly discrete metric space
with bounded geometry, $(X, d_X)$.  Fix a family of representatives $\mathcal{R}$ of orbits of $G$ in
$X$, and let $\mH$ be the family of subgroups occurring as the stabilizers of elements of $\mathcal{R}$.
If $(X,d_X)$ has property A, then $G$ has relative property A with respect to $\mH$.
\end{thm*}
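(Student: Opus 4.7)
The plan is to invoke a Yu/Reiter-type formulation of relative property A and to transfer Yu's approximating functions from $(X, d_X)$ to $G$ via sections of the orbit maps. Since $G$ acts cocompactly on a uniformly discrete space, $\mathcal{R} = \{r_1, \ldots, r_n\}$ is finite, so $\mH = \{H_1, \ldots, H_n\}$ is a finite family as required. Each orbit map $\pi_i \colon G \to G\cdot r_i \subseteq X$, $g \mapsto g\cdot r_i$, descends to an injection $G/H_i \hookrightarrow X$; a short check using a finite generating set of $G$ together with the $G$-invariance of $d_X$ shows that every $\pi_i$ is coarsely Lipschitz, with a constant $C$ that may be chosen uniformly in $i$.

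Given $\varepsilon > 0$ and $R > 0$, I would apply property A of $(X, d_X)$ with parameters $\varepsilon$ and $CR$ to obtain $S' > 0$ and a map $\eta \colon X \to \ell^1(X)_{+,1}$ satisfying the usual variation and support conditions. Fix a global set-theoretic section $\sigma \colon X \to G$ (so $\sigma(x) \cdot r_{i(x)} = x$, where $r_{i(x)}$ is the representative of the orbit containing $x$), which is injective and hence induces an $\ell^1$-isometry $\sigma_\ast$. Define
\[
\xi_g \;=\; \frac{1}{n}\sum_{i=1}^n \sigma_\ast \eta_{g\cdot r_i} \;\in\; \ell^1(G)_{+,1}.
\]
The variation estimate then transfers directly:
\[
\|\xi_g - \xi_{g'}\|_1 \;\leq\; \frac{1}{n}\sum_{i=1}^n \|\eta_{g\cdot r_i} - \eta_{g'\cdot r_i}\|_1 \;<\; \varepsilon
\]
whenever $d_G(g, g') \leq R$, using the coarse Lipschitz bound $d_X(g\cdot r_i, g'\cdot r_i) \leq CR$ and the variation property of $\eta$.

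The principal obstacle is the relative support condition, which amounts to showing $\mathrm{supp}(\xi_g) \subseteq \bigcup_{j} g H_j \cdot B_G(e, S)$ for some $S = S(S')$. Unpacking the push-forward, if $h \in \mathrm{supp}(\sigma_\ast \eta_{g\cdot r_i})$ then $h = \sigma(y)$ with $d_X(y, g\cdot r_i) \leq S'$, and $y = h \cdot r_j$ for some $j$; so one must show that $d_X(h\cdot r_j, g\cdot r_i) \leq S'$ forces $h \in g H_j \cdot B_G(e, S)$. This is a \v{S}varc--Milnor-type statement: the finitely many cosets $G/H_i$, equipped with their quotient pseudometrics, are coarsely equivalent to the orbits $G\cdot r_i \subseteq X$, with constants uniform across the finite family of orbits, precisely because $G$ acts cocompactly on the bounded-geometry discrete space $X$. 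Once this uniform coarse equivalence is in hand, the required $S$ is produced and $\xi_g$ satisfies the Yu-type definition of relative property A with respect to $\mH$. Equivalence of this definition with the other formulations developed earlier in the paper, in particular the Reiter-type condition, then completes the proof.
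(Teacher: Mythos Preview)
Your instinct—transport the property~A functions from $X$ to $G$ and check a Reiter-type condition—is sound, but both the route and the details differ from the paper, and there is a genuine gap in how you connect your construction to any of the characterizations actually proved.

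The paper does not use the Reiter formulation (Proposition~\ref{prop:ReitersConditionMany}) here at all. It takes $K=X$ itself as the cofinite $G$-set and verifies the Ozawa-type criterion of Proposition~\ref{prop:OzawaRelPropAMany}. The key device is the twist $\mu_n(g)=g\,\xi_n(g^{-1}x_0)\in\Prob(X)$ for a single basepoint $x_0$: this forces the support of every $\mu_n(g)$ into the \emph{fixed} finite ball $B_{S_n}(x_0)$, so extension to $\beta G$ is automatic, and the near-equivariance estimate reduces to $d_X(w^{-1}x_0,w^{-1}g^{-1}x_0)=d_X(x_0,g^{-1}x_0)\le C\,\ell_G(g)$, which is just the elementary Lipschitz bound. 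No \v{S}varc--Milnor is needed, and no averaging over $\mathcal{R}$.

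In your argument the push-forward $\sigma_\ast$ is a detour that creates the gap. Since $\sigma$ is injective, $\sigma_\ast$ merely relabels $\Prob(X)$ by a transversal in $G$; your $\xi_g$ therefore lives in $\ell^1(G)$, not in $\Prob(K)$ for any cofinite $G$-set with the prescribed stabilizers, and none of Definitions~\ref{defn:YuRelPropA}--\ref{def:BNNWRelAmenabAction} speak of measures on $G$ with support near coset-thickenings. Moreover the condition you aim for, $h\in gH_jB_G(e,S)$, is not the translation of $\rho_{G,K}(g,\,\cdot\,)<S$: for $K=X$ one computes $\rho_{G,X}(g,hr_j)=\min\{\ell_G(a):a\in g^{-1}hH_j\}$, which is the bound $h\in gB_G(e,S)H_j$ (opposite order), and it is this left-coset form that the \v{S}varc--Milnor comparison between $G/H_j$ and the orbit $G\cdot r_j$ actually yields. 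The clean fix is to drop $\sigma_\ast$ and the averaging entirely, set $f_n(g)=\eta_{g\cdot r_1}\in\Prob(X)$, and verify Proposition~\ref{prop:ReitersConditionMany} directly with $K=X$; then the support check is exactly the \v{S}varc--Milnor statement you allude to. That works, but the paper's twisted construction via Proposition~\ref{prop:OzawaRelPropAMany} is both shorter and avoids the quasi-isometry input altogether.
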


A corollary to the theorem is the following result on a complex of groups.
\begin{cor*}
Suppose the finitely generated discrete group $G$ is the fundamental group of a developable finite dimensional complex of groups
whose development is a locally finite complex with property A.  Then $G$ has relative property A with
respect to the vertex groups.  In particular, if each vertex group has property A, then $G$ has property A. 
\end{cor*}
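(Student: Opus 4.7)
The plan is to deduce the corollary directly from the preceding theorem together with the main theorem quoted above. Let $Y$ denote the development of the given complex of groups, so that by hypothesis $Y$ is a locally finite finite-dimensional simplicial (or polyhedral) complex with property~A, and $G$ acts on $Y$ with quotient the underlying complex, which is compact in each dimension.

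First I would reduce from the complex $Y$ to its vertex set. Let $X = Y^{(0)}$ equipped with the restriction of the geodesic metric $d_Y$ (equivalently, the edge-path metric in the 1-skeleton). Since $Y$ is locally finite and finite-dimensional, the inclusion $X \hookrightarrow Y$ is a quasi-isometry, so $(X, d_X)$ is a uniformly discrete metric space of bounded geometry with property~A (property~A being a quasi-isometry invariant of bounded geometry spaces). The action of $G$ on $Y$ restricts to an isometric action on $X$, and cocompactness of the $G$-action on $Y$ together with local finiteness yields a finite set of $G$-orbits in $X$. Choose a family $\mathcal{R}$ of orbit representatives among the vertices, and let $\mH$ be the collection of stabilizers of elements of $\mathcal{R}$.

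Next I would identify $\mH$ with the vertex groups of the complex of groups. This is essentially the content of the developability assumption: a developable complex of groups is recovered from its development precisely as the quotient equipped with the stabilizer data at preimages of vertices, so up to conjugation the stabilizers of vertices in $Y$ coincide with the vertex groups. With this identification in hand, the hypotheses of the preceding theorem are satisfied for $G$ acting on $(X, d_X)$, and we conclude that $G$ has relative property~A with respect to $\mH$, which is the first assertion. The final statement then follows by invoking the first theorem: relative property~A of $G$ with respect to $\mH$, combined with property~A of each $H \in \mH$, implies property~A of $G$.

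The main obstacle I anticipate is the reduction from $Y$ to its vertex set: one must verify carefully that $(X, d_X)$ is of bounded geometry (finite balls of uniformly bounded cardinality, which uses local finiteness plus cocompactness) and that property~A descends from $Y$ to $X$, i.e.\ that the inclusion is a coarse equivalence preserving property~A. The rest is bookkeeping about developments of complexes of groups and a direct appeal to the two theorems stated in the introduction.
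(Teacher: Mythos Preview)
Your proposal is correct and follows exactly the approach the paper intends: the corollary is stated immediately after Theorem~\ref{thm:PropAComplexRelPropA} without proof, and is meant to be a direct application of that theorem (to the vertex set of the development, with the vertex groups arising as stabilizers) together with Corollary~\ref{cor:RelativePropAGivesPropA} for the final clause. You have simply made explicit the reduction to the vertex set and the identification of stabilizers with vertex groups that the paper leaves to the reader.
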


This also gives an extension of a Theorem of \cite{BCGNW}, where it is shown that a finite dimensional
CAT($0$) cube complex has property A.  We have the following corollary which generalizes a theorem of Bell \cite{B} and Guentner-Tessera-Yu \cite{GTY} for property A groups.
\begin{cor*}
Suppose that the finitely generated group $G$ acts cocompactly on a locally finite, finite dimensional CAT($0$) cube complex.
If any vertex stabilizer has property A, then $G$ has property A.
\end{cor*}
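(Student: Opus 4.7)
The plan is to apply the two theorems quoted above in succession. Let $Y$ denote the CAT($0$) cube complex on which $G$ acts cocompactly, and let $X = Y^{(0)}$ be its $0$-skeleton equipped with the combinatorial edge-path metric $d_X$. First I would verify that $(X,d_X)$ is a uniformly discrete metric space of bounded geometry: uniform discreteness is automatic since $d_X$ is integer-valued, and bounded geometry follows from local finiteness together with the uniform bound on the dimension of $Y$. The $G$-action on $Y$ is by cube-complex automorphisms, so it restricts to an action on $X$; cocompactness on $Y$ (which gives finitely many $G$-orbits of cells in all dimensions) yields finitely many $G$-orbits of vertices, so the restricted action on $X$ is again cocompact.

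Next I would invoke the theorem of \cite{BCGNW} that every finite-dimensional CAT($0$) cube complex has property A, applied to $Y$. Since the inclusion $X \hookrightarrow Y$ is a quasi-isometry when $Y$ is locally finite and finite-dimensional, and since property A is a coarse invariant for uniformly discrete spaces of bounded geometry, $(X,d_X)$ itself has property A. The hypotheses of the second theorem quoted above are then in force with this choice of $X$, and I conclude that $G$ has relative property A with respect to the family $\mH$ of stabilizers of a set of orbit representatives of vertices. Since each such $H \in \mH$ is a vertex stabilizer and therefore has property A by hypothesis, the first theorem quoted above immediately yields that $G$ has property A.

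The only genuinely delicate point I foresee is justifying that property A descends from the cube complex $Y$ to its vertex set $X$. I would handle this through the standard observation that the inclusion of the $0$-skeleton into a locally finite, finite-dimensional cube complex is a quasi-isometry, combined with the fact that property A is preserved under coarse equivalence among uniformly discrete spaces of bounded geometry. Everything else amounts to checking hypotheses and chaining the two theorems together.
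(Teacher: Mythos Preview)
Your proposal is correct and follows essentially the same route as the paper, which states this corollary without an explicit proof and intends it to follow directly from the BCGNW result that finite-dimensional CAT(0) cube complexes have property~A together with the two theorems you invoke. Your added care about passing from the complex $Y$ to its vertex set $X$ via a quasi-isometry, and verifying bounded geometry, simply fills in details the paper leaves implicit; one small sharpening available from the paper's Corollary~\ref{cor:ActionPropA} is that you only need \emph{one} vertex stabilizer with property~A (by restricting to a single orbit, which is coarsely equivalent to $X$), so you need not assume it for all of $\mH$.
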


By specializing the definition of relative property A, we formulate a notion of relative amenability.
A countable discrete group $G$ is relatively amenable to a finite family of subgroups $\mH$ if there is a
$G$-invariant mean on $\ell^{\infty}(G/\mH)$, where $G/\mH$ is the disjoint of all $G/H$ for $H \in \mH$.
Our characterization of relative property A gives a cohomological characterization of relative amenability
as well.  This allows one to determine amenability for a group in terms of group actions on metric spaces, via a
corollary to Theorem \ref{thm:RelAmenActionOnSpace} which states the following:
\begin{cor*}
Suppose the countable discrete group $G$ acts cocompactly on a uniformly discrete metric space with bounded geometry.
If the space is amenable in the sense of Block and Weinberger \cite{BW}, and any point stabilizer is amenable, then $G$ is amenable.
\end{cor*}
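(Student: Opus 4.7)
The plan is to mirror the proof of the property A corollaries above, substituting amenability for property A throughout. The hypothesis of the corollary is exactly the input for the amenability analogue of the second main theorem in the excerpt, namely Theorem \ref{thm:RelAmenActionOnSpace}: the group $G$ acts cocompactly on the uniformly discrete bounded geometry space $X$, the space is amenable (this time in the Block--Weinberger sense, which plays the role that property A plays for the second main theorem), and one takes $\mH$ to be the family of stabilizers of a set of orbit representatives $\mathcal{R}$. Applying that theorem immediately yields that $G$ is relatively amenable with respect to $\mH$.

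To conclude, I would invoke the amenability analogue of the first main theorem of the excerpt: since $G$ is relatively amenable to $\mH$ and each $H\in\mH$ is amenable by hypothesis, $G$ itself is amenable. Concretely, relative amenability is characterized (by specializing the cohomological characterization of relative property A) by surjectivity of $H^0_b(\mH;\V^*)\to H^1_b(G,\mH;\V^*)$ on the appropriate class of coefficients. Feeding this into the long exact sequence of \cite{JOR} relating $H^*_b(G;\V^*)$, $H^*_b(G,\mH;\V^*)$, and $\bigoplus_{H\in\mH}H^*_b(H;\V^*)$, together with the classical vanishing $H^1_b(H;\V^*)=0$ for each amenable $H$, forces $H^1_b(G;\V^*)=0$ for every such $\V$, which is the Johnson-type characterization of amenability for $G$.

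The main obstacle is purely bookkeeping: one must verify that the coefficient modules appearing in the cohomological characterization of relative amenability are exactly the ones for which vanishing on the $H$'s implies amenability of $H$, and then check that the maps in the long exact sequence are the ones induced by inclusion, so that the diagram chase goes through. Once Theorem \ref{thm:RelAmenActionOnSpace} and the amenability specialization of the first main theorem are in hand, the corollary is just the composition of these two implications and requires no further geometric input beyond what is already assumed.
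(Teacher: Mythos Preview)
Your overall strategy matches the paper's: apply Theorem \ref{thm:RelAmenActionOnSpace} to obtain relative amenability with respect to the family $\mH$ of stabilizers, then invoke Corollary \ref{cor:RelAmenabAndAmenabGivesAmenab} (whose proof is precisely the long-exact-sequence argument you sketch) to conclude that $G$ is amenable.

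There is, however, one step in the paper that you omit. The hypothesis ``any point stabilizer is amenable'' is intended (as the Section 6 restatement makes explicit) to mean that \emph{some} point $x_0$ has amenable stabilizer, not that every stabilizer is amenable. Your argument, applied directly to $X$, requires each $H\in\mH$ to be amenable. The paper deals with this by first replacing $X$ with the single orbit $Gx_0$: cocompactness of the action forces $Gx_0$ to be coarsely equivalent to $X$, and since Block--Weinberger amenability is a coarse invariant, $Gx_0$ is again amenable. Applying Theorem \ref{thm:RelAmenActionOnSpace} to this transitive action yields relative amenability of $G$ with respect to the single subgroup $\mathrm{Stab}(x_0)$, after which your second step finishes the proof. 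If one reads ``any'' as ``every,'' your argument is already complete; the orbit-reduction trick is what upgrades the statement to require only one amenable stabilizer.
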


We anticipate that this will provide many new examples of amenable groups. 

We would also like to point out that from the works of Guentner-Kaminker \cite{GK} and Ozawa \cite{Oz2} Property A for countable discrete groups is equivalent to the exactness  of the group \cite{KW}. We will discuss a relative version of the exactness in a future publication.

We now outline what will follow.

In section 2 we recall the definition of property A for uniformly discrete metric spaces of bounded geometry.
We then review the bounded cohomology interpretation of property A given in \cite{BNNW}.  
In section 3 we review relative bounded cohomology and the Bieri-Eckmann type long-exact sequence relating 
relative and absolute bounded cohomologies \cite{JOR}.

In section 4 we introduce the notion of relative property A.  This is based on the existence of a countable set $K$
on which the group acts cofinitely, with the elements of $\mH$ and their conjugates appearing as the
point stabilizers.  We then give several equivalent descriptions of relative property A.  Our notion of relative property A
has the benefit that if $\mH$ consists of only the trivial subgroup, our definition reduces to
the statement that $G$ itself has property A.  From the work of Ozawa \cite{Oz}, groups acting cofinitely on
uniformly fine hyperbolic graphs provide examples of groups satisfying relative property A with respect to the vertex stabilizers.  
(In particular, this includes relatively hyperbolic groups, amalgamated free products, and HNN extensions.) 

In section 5 we give a cohomological characterization for relative property A.  In the case
that each of the subgroups are of property A, a long-exact sequence argument implies property A for the group.  We 
show that a finitely generated group acting cocompactly on a property A metric space has relative property A with 
respect to the point stabilizers.  It is known by \cite{C,BCGNW} that locally finite, finite dimensional CAT($0$) cube
complexes have property A, so that a group acting cocompactly on such a complex has relative property A with respect
to the vertex stabilizers.  Consequently, if each of the vertex stabilizers have property A then the group itself has property A.

Finally, in section 6 we propose a definition of relative amenability.  Our notion of relative amenability, in the case of a single 
subgroup, reduces to Monod and Popa's notion of co-amenability \cite{MoP}.  It is also equivalent to the amenability of
the quasi-regular representation of $G$ on $\ell^2(G/\mH)$ \cite{Be}.  A cohomological characterization for
relative amenability is obtained by specializing that for relative property A.  It is then shown that if a
group is relatively amenable with respect to a family of amenable subgroups, then the group is amenable.
We also show that a finitely generated group acting cocompactly on an amenable metric space with amenable
stabilizers is amenable.  

The second and third authors would like to thank the Department of Mathematical Sciences at IUPUI for their
support during our visit.  We would like to thank Nicolas Monod for bringing the papers \cite{Mo2, MoP} to our attention
and suggesting Proposition \ref{prop:higherDegrees}.  We would like to thank Thomas Sinclair for bring the paper \cite{Be}
to our attention.

\section{Bounded cohomology and Property A}

Suppose $X$ is a compact Hausdorff space, and denote by $C(X)$ the space of real-valued continuous functions
on $X$.   Let $K$ be a fixed countable set admitting a cofinite $G$ action with stabilizers the conjugates of 
elements of $\mH$.  Denote by $V$ be the space of all functions $f : K \to C(X)$ endowed with the norm
\[ \| f \|_{V} = \sup_{x \in X} \sum_{k \in K} |f_k(x)| \]
where $f_k \in C(X)$ is the function obtained by evaluating $f$ at $k \in K$.

Let $W_{00}(K,X)$ be the subspace of $V$ which contains all functions $f : K \to C(X)$ which have finite
support and such that for some $c \in \R$, $c = c(f)$, $\sum_{k \in K} f_k = c 1_X$, where
$1_X$ denotes the constant function with value $1$ on $X$.  Denote the closure of this subspace, in the $V$-norm,
by $W_{0}(K,X)$.  Let $\pi : W_{00}(K,X) \to \R$ be defined so that $\sum_{k \in K} f_k = \pi(f)1_X$.  By
continuity $\pi$ extends to all of $W_{0}(K,X)$. Denote the kernel of this extension by $N_{0}(K,X)$.

\begin{defn}
A Banach space $\E$ is a $C(X)$-module if it is equipped with a contractive unital representation
of $C(X)$.  If in addition $X$ admits a $G$-action, $\E$ is a $G$-$C(X)$ module if $G$ acts on
$\E$ by isometries and the representation of $C(X)$ is $G$-equivariant.
\end{defn}

\begin{defn}
Let $\E$ be a $G$-$C(X)$ module.  $\phi^1, \phi^2 \in \mathcal{E}$ are disjointly supported if there
exist $f_1, f_2 \in C(X)$ with disjoint support in $X$, with $f_1 \phi^1 = \phi^1$ and $f_2 \phi^2 = \phi^2$.

$\E$ is $\ell^\infty$-geometric if, whenever $\phi^1$ and $\phi^2$ are disjointly supported, 
$\| \phi^1 + \phi^2 \|_{\E} = \sup \left\{ \| \phi^1 \|_{\E}, \| \phi^2 \|_{\E} \right\}$.

$\E$ is $\ell^1$-geometric if, whenever $\phi^1$ and $\phi^2$ are disjointly supported, 
$\| \phi^1 + \phi^2 \|_{\E} = \| \phi^1 \|_{\E} + \| \phi^2 \|_{\E}$.
\end{defn}

\begin{lem}
The Banach space $V$ is a $G$-$C(X)$ module.  The subspace $N_0(K,X)$ is an $\ell^\infty$-geometric $G$-$C(X)$ module.
\end{lem}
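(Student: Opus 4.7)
The plan is to verify the Banach-space and module structures on $V$ first, then restrict them to $N_0(K,X)$, and finally establish the $\ell^\infty$-geometric identity. Completeness of $V$ follows from completeness of $C(X)$ together with Fatou-type arguments on the defining sum. The $C(X)$-action is defined pointwise by $(g\cdot f)_k(x) = g(x)f_k(x)$; the inequality $\sum_k|g(x)f_k(x)| \leq \|g\|_\infty \sum_k|f_k(x)|$ gives both that $g\cdot f \in V$ and contractivity, while $1_X \cdot f = f$ is unitality. The $G$-action $(\gamma\cdot f)_k(x) := f_{\gamma^{-1}k}(\gamma^{-1}x)$ is isometric by reindexing $k$ and changing variables in $x$, and equivariance of the $C(X)$-representation (with $(\gamma\cdot g)(x) = g(\gamma^{-1}x)$ on $C(X)$) is a direct computation.

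For $N_0(K,X)$, closedness in $V$ is immediate from its definition as the kernel of the continuous extension of $\pi$. If $f \in W_{00}(K,X)$ has $\sum_k f_k = c\,1_X$, reindexing yields $\sum_k (\gamma\cdot f)_k = c\,1_X$ as well, so $\pi$ is $G$-invariant and $N_0(K,X)$ is $G$-stable. For the $C(X)$-action, the identity $\sum_k (g\cdot f)_k(x) = g(x)\sum_k f_k(x)$ vanishes whenever $\pi(f) = 0$; hence the action preserves the finitely supported elements of $\ker\pi$, and the general case follows from density of these elements in $N_0(K,X)$ (approximate $f \in N_0$ by a sequence in $W_{00}$ and correct by a fixed element of $W_{00}$ with $\pi$-value $1$) combined with contractivity of the $C(X)$-action.

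The substantive step is the $\ell^\infty$-geometric identity. Suppose $\phi^1,\phi^2 \in N_0(K,X)$ are disjointly supported via $f_1,f_2 \in C(X)$ with disjoint supports in $X$. The relation $f_i\phi^i = \phi^i$ forces $\phi^i_k(x) = 0$ whenever $f_i(x) \neq 1$; in particular each $\phi^i_k$ vanishes outside $\operatorname{supp}(f_i)$. Since $\operatorname{supp}(f_1)$ and $\operatorname{supp}(f_2)$ are disjoint, for every fixed $x \in X$ at most one of $\sum_k|\phi^1_k(x)|$, $\sum_k|\phi^2_k(x)|$ is nonzero, so
\[
\sum_k \bigl|\phi^1_k(x) + \phi^2_k(x)\bigr| = \sum_k|\phi^1_k(x)| + \sum_k|\phi^2_k(x)| = \max\Bigl(\sum_k|\phi^1_k(x)|,\; \sum_k|\phi^2_k(x)|\Bigr)
\]
pointwise in $x$. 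Taking $\sup_{x\in X}$ commutes with the pointwise $\max$, giving $\|\phi^1+\phi^2\|_V = \max(\|\phi^1\|_V,\|\phi^2\|_V)$, which is the desired identity. The only genuinely delicate observation is the implication ``$f_i\phi^i = \phi^i \Rightarrow \phi^i_k(x) = 0$ wherever $f_i(x)\neq 1$''; once that is in hand, the geometric identity is essentially forced by the $\ell^1$-flavor of the norm on $V$, and all remaining verifications are routine.
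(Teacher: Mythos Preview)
Your proof is correct and follows essentially the same approach as the paper's: both define the $G$- and $C(X)$-actions pointwise and reduce the $\ell^\infty$-geometric identity to the observation that, at each $x \in X$, at most one of $\phi^1$, $\phi^2$ contributes, so the pointwise sum equals the pointwise maximum and taking $\sup_x$ finishes. Your argument is in fact more thorough---verifying completeness of $V$, equivariance, and stability of $N_0(K,X)$ under the $C(X)$-action via the density-and-correction trick---whereas the paper states these without proof and computes the norm identity directly from $\phi^1+\phi^2 = f_1\phi^1 + f_2\phi^2$ rather than via your intermediate implication $f_i(x)\neq 1 \Rightarrow \phi^i_k(x)=0$.
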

\begin{proof}
The $G$ action on $X$ extends to an isometric action on $V$ via 
$(g \cdot f)_k(x) = f_{g^{-1}k}(g^{-1}x) = g \cdot (f_{g^{-1}k}(x))$.  Given a function $\phi \in C(X)$ and $f \in V$,
define $\phi \cdot f$ by $(\phi \cdot f)_k(x) = \phi(x) f_k(x)$.
This action restricts to $N_0(K,X)$.  It remains to show $N_0(K,X)$ is $\ell^\infty$-geometric.  

To that end, suppose $\phi^1$ and $\phi^2$ are disjointly supported elements of $N_0(K,X)$.
Then there exist disjointly supported $f_1$ and $f_2$ in $C(X)$ such that $f_1 \phi^1 = \phi^1$ and $f_2 \phi^2 = \phi^2$.
\begin{align*}
\| \phi^1 + \phi^2 \|_V &= \| f_1 \phi^1 + f_2 \phi^2 \| \\
	&= \sup_{x \in X} \sum_{k \in K} \left| f_1(x) \phi^1_k(x) + f_2(x) \phi^2_k(x) \right| \\
	&= \sup_{x \in X} \left(\sum_{k \in K} | f_1(x) \phi^1_k(x)| + \sum_{k \in K} | f_2(x) \phi^2_k(x)|  \right)\\
	&= \sup \left\{ \| \phi^1\|_V, \| \phi^2 \|_V  \right\} 
\end{align*}
since the two terms in the last sum are disjointly supported in $X$.
\end{proof}

The main result of \cite{BNNW} is the following theorem.
\begin{thm}[\cite{BNNW}(Theorem B)]\label{thm:BNNW}
Let $G$ be a countable discrete group acting by homeomorphisms on a compact Hausdorff space $X$.
Then the following are equivalent.
\begin{enumerate}
	\item The action of $G$ on $X$ is topologically amenable.
	\item The class $[J] \in H^1_b(G; N_0(G,X)^{**})$ is trivial.
	\item $H^p_b(G; \E^*) = 0$ for $p \geq 1$ and every $\ell^1$-geometric $G$-$C(X)$ module $\E$.
\end{enumerate}
\end{thm}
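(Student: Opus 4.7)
The plan is to view the three conditions as progressively stronger cohomological statements and to prove (1) $\Rightarrow$ (2) $\Rightarrow$ (3) $\Rightarrow$ (1). First I would fix a distinguished element in order to produce the cocycle $J$: take $\delta_e \in W_{00}(G,X)$ defined by $(\delta_e)_k = 1_X$ if $k=e$ and $0$ otherwise, so $\pi(\delta_e)=1$, and set $J(g) := g\cdot\delta_e - \delta_e$. Since $\pi$ is $G$-equivariant one has $J(g)\in N_0(G,X)$, and a direct calculation gives the cocycle identity $J(gh) = J(g) + g\cdot J(h)$. The resulting class $[J]\in H^1_b(G;N_0(G,X))$ is independent of the choice of lift of $1\in\mathbb{R}$ to $W_0(G,X)$, and we may regard it as living in $H^1_b(G;N_0(G,X)^{**})$ via the canonical inclusion.

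For (1) $\Rightarrow$ (2), topological amenability of the action supplies a sequence $\mu_n\in W_0(G,X)$ of continuous probability-type elements with $\pi(\mu_n)=1$ and $\|g\cdot\mu_n - \mu_n\|_V\to 0$ uniformly in $x\in X$ on finite subsets of $G$. Setting $b_n := \mu_n - \delta_e \in N_0(G,X)$, we obtain $J(g) - (g\cdot b_n - b_n) = g\cdot\mu_n - \mu_n$, so $J$ is asymptotically a coboundary. Taking a weak-$*$ accumulation point of $\{b_n\}$ in the unit ball of $N_0(G,X)^{**}$ yields an honest $b$ with $J(g) = g\cdot b - b$, i.e. $[J]=0$.

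The hardest step is (2) $\Rightarrow$ (3). Starting from a trivialization $J(g) = g\cdot b - b$ in $N_0(G,X)^{**}$, the element $\mu := \delta_e + b \in W_0(G,X)^{**}$ is $G$-invariant with $\pi^{**}(\mu)=1$, so it behaves like an equivariant family of means on $G$ parametrized by $X$. Dualizing, for every $\ell^1$-geometric $G$-$C(X)$ module $\E$ one obtains a bounded $G$-equivariant averaging map $m: \ell^\infty(G;\E^*)\to \E^*$ which, in the language of Ivanov--Monod relative homological algebra, exhibits $\E^*$ as relatively injective in the category of $G$-modules. A standard averaging argument then yields a bounded contracting homotopy on the inhomogeneous bar resolution computing $H^*_b(G;\E^*)$, forcing the higher bounded cohomology to vanish. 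The technical heart here is to check that the $\ell^1$-geometric hypothesis is exactly what makes the averaging norm-bounded on $\E^*$, and that the $C(X)$-module structure is respected throughout; this compatibility between the module geometry and the trivialization of $[J]$ is where I expect the real work to lie.

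Finally, (3) $\Rightarrow$ (1) is essentially a specialization. The vanishing hypothesis applied to $\E = N_0(G,X)$ (which is $\ell^\infty$-geometric by the preceding lemma, so its dual can be plugged into the statement after suitable identification) kills $[J]$ in $H^1_b(G;N_0(G,X)^{**})$. From a cobounding element one recovers an invariant element of $W_0(G,X)^{**}$ mapping to $1$ under $\pi^{**}$; a Day-type convex-hull argument then approximates this weak-$*$ invariant mean by norm-continuous probability measures varying in $x\in X$ that are almost $G$-invariant, which is precisely topological amenability of the action.
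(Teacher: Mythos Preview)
The paper does not prove this theorem; it is quoted verbatim from \cite{BNNW} as their Theorem~B, and no argument is supplied here beyond the statement. So there is no ``paper's own proof'' to compare your proposal against.

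That said, your outline is a reasonable sketch of how the BNNW argument runs, with one organizational quirk and one slip worth flagging. The standard cycle is $(1)\Rightarrow(3)\Rightarrow(2)\Rightarrow(1)$: topological amenability directly yields the equivariant averaging map $\mu_\E:\ell^\infty(G,\E^*)\to\E^*$ that renders every such $\E^*$ relatively injective, giving (3); condition (3) then specializes to kill $[J]$, giving (2); and from $[J]=0$ one extracts a $G$-invariant element of $W_0(G,X)^{**}$ over $1$, which a Goldstine/Mazur convexity argument converts back into the asymptotically invariant probability measures of (1). Your cycle $(1)\Rightarrow(2)\Rightarrow(3)\Rightarrow(1)$ is not wrong, but your ``hardest step'' $(2)\Rightarrow(3)$ really factors through (1): you first recover the invariant mean $\mu=\delta_e+b$ from the trivialization of $[J]$ (which is essentially $(2)\Rightarrow(1)$ already), and only then use it to average. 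The slip is in your $(3)\Rightarrow(1)$: to place $[J]$ inside a group of the form $H^1_b(G;\E^*)$ you must take $\E = N_0(G,X)^{*}$, not $\E = N_0(G,X)$. The point, as the paper notes later, is that $N_0(G,X)$ is $\ell^\infty$-geometric, so its dual $N_0(G,X)^*$ is $\ell^1$-geometric; with that choice $\E^* = N_0(G,X)^{**}$ and the vanishing in (3) applies.
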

Here $[J]$ is a naturally constructed class in $H^1_b(G; N_0(G,X)^{**})$, and $N_0(G,X)$ is a
module serving as an absolute version of our relative coefficient module $N_0(K,X)$.

The main result of \cite{Mo2} has a similar characterization of topologically amenable actions
of groups.


\section{Relative bounded cohomology}
The cohomology of a group relative to a subgroup was defined by Auslander \cite{Au} for a single
not-necessarily normal subgroup, and by Bieri-Eckmann \cite{BE} for a family of subgroups.  The case of
relative bounded cohomology for a group relative to a family of subgroups was defined by Mineyev-Yaman
\cite{MY} and extended to more general bounding classes in \cite{JOR}.  We review the construction in this
section.

Suppose $\mH$ is a finite family of subgroups of the countable discrete group $G$.  For each 
$H \in \mH$, $G/H$ denotes the collection of left-cosets of $H$ in $G$. Let $G/\mH = \coprod_{H \in \mH} G/H$.
Then $\C G/\mH$ is the complex vector space with basis $G/\mH$.  Equivalently, $\C G/\mH$ is the complex
vector space of all finitely supported functions $G/\mH \to \C$.  Define the augmentation $\varepsilon : \C G/\mH \to \C$
by $\varepsilon( f ) = \sum_{x \in G/\mH} f(x)$, and set $\Delta = \ker \varepsilon$.
For $V$ a $\CG$ module, the relative cohomology of $G$ with respect to $\mH$ with coefficients in $V$ is given by
\[ H^p(G,\mH;V) \isom H^{p-1}(G; \Hom(\Delta, V)) \isom \Ext_{\CG}^{p-1}( \Delta, V ).  \]

The inclusion of coefficients $V \to \Hom( \C G/\mH , V ) \to \Hom(\Delta, V)$ induces a long-exact sequence
\[ \cdots \to H^p(G;V) \to H^p(\mH;V) \to H^{p+1}(G,\mH;V) \to H^{p+1}(G;V) \to \cdots  \]
where $H^p( \mH;V ) = \prod_{H \in \mH} H^p(H;V)$.

Relative bounded cohomology is constructed in a similar manner.  A bounded $\C G$ module is a
$\C G$ module which is a normed complex vector space such that the $G$ action is by uniformly
bounded linear operators.  The morphisms between these modules are the $\C G$ module morphisms
which are bounded with respect to the norms in their domain and range.  For two normed spaces $U$ and $V$, 
$\bHom(U,V)$ will denote the space of all bounded linear maps from $U$ to $V$.  If $U$ and $V$ are bounded
$\C G$ modules, $\bHom_{\C G}(U,V)$ will denote the bounded $\C G$ module maps from $U$ to $V$.  Denote
by $\bExt$ the $\Ext$ functor in this category of bounded $\C G$ modules and bounded $\C G$ morphisms.
Endow $\C G/\mH$ with the $\ell^1$-norm, $\| f \|_{\ell^1} = \sum_{x \in G/\mH} |f(x)|$, and $\Delta$
with the restriction of this norm.

The relative bounded cohomology of $G$ with respect to $\mH$ with coefficients in a bounded $\C G$ module $V$ is
given by \[ H^p_{b}(G,\mH;V) \isom H^{p-1}(G; \bHom(\Delta, V) ) \isom \bExt_{\CG}^{p-1}( \Delta, V ). \]

As shown in \cite{JOR} for more general bounding classes, there is a long-exact sequence in relative bounded
cohomology analogous to that above.
\[ \cdots \to H_b^p(G;V) \to H_b^p(\mH;V) \to H_b^{p+1}(G,\mH;V) \to H_b^{p+1}(G;V) \to \cdots  \]
where $H_b^p(\mH;V) = \prod_{H \in \mH} H_b^p(H;V)$.

When dealing with multiple subgroups, $\mH = \{ H_i \, | \, i \in \I \}$, we often work with a particular resolution.
Let $\I G = \coprod_{i \in \I} G_i$ the disjoint union of $G_i$, where each $G_i$ is a copy of $G$.
Let $S_k(\I G)$ be the collection of all $(k+1)$-tuples $(g_0, \ldots, g_k)$ where each $g_j \in \I G$.  Equip
$S_k( \I G)$ with the diagonal left $G$-action.  Set $St_k( \I G ) = \C S_k( \I G )$.  The sequence
\[ \cdots \to St_2( \I G) \to St_1( \I G ) \to St_0( \I G ) \to \C \to 0 \]
is exact, where the maps $\partial: St_k( \I G ) \to St_{k-1}(\I G)$ are the usual `skipping' boundary maps
\[ \partial( g_0, g_1, \ldots, g_k ) = \sum_{i = 0}^{k} (-1)^i (g_0, g_1, \ldots, \hat{g_i}, \ldots, g_k) \]
and $\epsilon: St_0(\I G) \to \C$ is the augmentation
\[ \epsilon(f) = \sum_{x \in \I G} f(x).\]  
Fix an $i \in \I$, and take $1$ to be the identity element of $G_i$.
Define $s : St_k( \I G ) \to St_{k+1}(\I G)$ by
$s( g_0, \ldots, g_k ) = (1, g_0, \ldots, g_k )$, and $s : \C \to St_0(\I G )$ by $s(z) = z(1)$.
This gives a contracting homotopy for the complex.
Thus $St_*( \I G )$ is a projective resolution of $\C$ over $\C G$.
This will be our standard resolution for calculating cohomologies.  When each $St_k( \I G )$ is endowed with
the $\ell^1$-norm, this also gives a bounded projective resolution of $\C$ over $\C G$, \cite{MY}, so it can
be utilized to calculate bounded cohomology as well.

\section{Relative property A}

Suppose $K$ is a countable cofinite $G$-set. Denote by $\mathcal{R}\subset K$ a set of representatives of the orbits of 
$G$ in $K$, and $\mH$ the set of subgroups of $G$ which occur as the stabilizers of the points in $\mathcal{R}$. 
(note that $\mH$ depends on the choice of $\mathcal{R}$).
\vspace{.2in}

For each $v \in K$, we may write $v$ as $v = g_v r_v$ for a unique $r_v\in \mathcal{R}$, with $g_v$ chosen so as to be of 
minimal length among all such elements translating $r_v$ to $v$. Define $\rho_{G,K}$ by the equality
\begin{align*}
&\rho_{G,K}: G\times K \to \R_+  \\
&\rho_{G,K}(g,v) = d_G(e,g_{g^{-1}v}) = \ell_G(g_{g^{-1}v})
\end{align*}
This map provides a way for measuring the distance between elements in $G$ and elements in $K$. In the particular case $H\normal G$ and $K = G/H$, $\rho_{G,K}(g,aH) = d_{G/H}(gH,aH) = d_{G/H}(\overline{g},\overline{a})$, so the above may be seen as a natural extension of the quotient metric to the case when one has an arbitrary cofinite $G$-set.


\begin{defn}\label{defn:YuRelPropA}
Suppose $G$ is a countable group and $\mH = \left\{ H_i \, | \, i \in \I \right\}$ is a finite family of subgroups of $G$.
$G$ has \underline{relative property A} with respect to $\mH$ if the following are satisfied.
\begin{enumerate}
	\item There exists a countable set $K$ admitting a cofinite $G$ action with point stabilizers precisely the conjugates of the elements of $\mH$.
	\item For every $\epsilon > 0$ and $R>0$ there exists an $S>0$ and a collection, $A_x$, of finite nonempty subsets of $K \times \N$ 
		indexed by $G$ such that the following hold.
		\begin{enumerate}
			\item For each $x \in G$, if $(k,j) \in A_x$ then $\rho_{G,K}(x,k) < S$. 
			\item For each $x, y\in G$ with $d_G(x,y) < R$ then \[ \frac{|A_x \Delta A_y|}{|A_x|} < \epsilon. \]
		\end{enumerate}
\end{enumerate}
\end{defn}

As is the case with property A, relative property A has myriad equivalent descriptions, some of which
we give below.  In each, the countable set $K$ is assumed to admit a cofinite $G$ action with stabilizers
the conjugates of the $H_i \in \mH$.  
The following is an analogue of a characterization of property A in \cite{BCGNW}.
\begin{prop}\label{prop:GBCNW_propADefn}
A discrete group $G$ has relative property A with respect to $\mH$ if and only if the following hold.
\begin{enumerate}
	\item There exists a countable space $K$ admitting a cofinite $G$ action with stabilizers the conjugates of elements of $\mH$.
	\item There exists a sequence of families of finitely supported functions $\xi_{n,x} : K \to \N \cup \{ 0 \}$, indexed by $x \in G$, satisfying the following conditions.
	\begin{enumerate}
		\item For every $n$ there is a constant $S_n$ such that if $\xi_{n,x}(k) \neq 0$, then $\rho_{G,K}(x,k) < S_n$.
		\item For every $R > 0$, \[ \frac{\| \xi_{n,y} - \xi_{n,x} \|_{\ell^1} }{\| \xi_{n,x} \|_{\ell^1}} \to 0\] 
			uniformly on the set $\{ (x,y) \, | \, d_G( x, y ) < R \}$ as $n \to \infty$.
	\end{enumerate}
\end{enumerate}
\end{prop}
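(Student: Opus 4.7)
The plan is to set up a natural correspondence between the two data types and translate the quantitative estimates. To each finite subset $A \subset K \times \N$ associate the finitely supported function $\xi_A : K \to \N \cup \{0\}$ by $\xi_A(k) = |\{ j \in \N : (k,j) \in A\}|$; conversely, to each finitely supported $\xi : K \to \N \cup \{0\}$ associate $A_\xi = \{(k,j) \in K \times \N : 1 \leq j \leq \xi(k)\}$. Both assignments use the $\N$ factor only as a device for recording multiplicities. Under the second assignment one has the exact identities
\[
|A_\xi| = \|\xi\|_{\ell^1}, \qquad |A_\xi \Delta A_{\xi'}| = \|\xi - \xi'\|_{\ell^1},
\]
while under the first one has $\|\xi_A\|_{\ell^1} = |A|$ together with the inequality $\|\xi_A - \xi_{A'}\|_{\ell^1} \leq |A \Delta A'|$, which is enough to pass the combinatorial ratio to the $\ell^1$ ratio.

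For the forward direction, enumerate $(\epsilon_n, R_n) = (1/n, n)$, apply Definition \ref{defn:YuRelPropA} for each $n$ to obtain $S_n$ and a family $\{A^{(n)}_x\}_{x \in G}$, and set $\xi_{n,x} = \xi_{A^{(n)}_x}$. The support condition on $A^{(n)}_x$ under $\rho_{G,K}$ transfers directly, since $\xi_{n,x}(k) \neq 0$ forces some $(k,j)$ to lie in $A^{(n)}_x$. For any fixed $R > 0$ and all $n \geq R$, the hypothesis gives
\[
\frac{\|\xi_{n,y} - \xi_{n,x}\|_{\ell^1}}{\|\xi_{n,x}\|_{\ell^1}} \leq \frac{|A^{(n)}_x \Delta A^{(n)}_y|}{|A^{(n)}_x|} < \frac{1}{n}
\]
uniformly on $\{(x,y) : d_G(x,y) < R\}$, yielding the desired uniform convergence.

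For the converse, fix $\epsilon, R > 0$, choose $n$ so that the $\ell^1$ ratio is less than $\epsilon$ uniformly on $\{d_G(x,y) < R\}$, set $S = S_n$, and take $A_x = A_{\xi_{n,x}}$. Each $A_x$ is finite, nonempty whenever $\xi_{n,x} \not\equiv 0$, and inherits the $\rho_{G,K}$ support condition with the same constant $S_n$. The exact identities above then convert the $\ell^1$ ratio bound directly into $|A_x \Delta A_y|/|A_x| < \epsilon$. I expect no serious obstacle: the proof is essentially a bookkeeping exercise, and the only point requiring mild care is handling the degenerate case $\xi_{n,x} \equiv 0$ (equivalently $A_x = \emptyset$), which can be ruled out by a normalization convention without affecting the uniform estimates.
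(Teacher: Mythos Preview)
Your proposal is correct and follows essentially the same approach as the paper: define $\xi_x(k)$ as the fiber cardinality of $A_x$ over $k$ in one direction, and $A_x = \{(k,j) : 1 \le j \le \xi_x(k)\}$ in the other, then translate the ratio estimates. In fact you are slightly more careful than the paper, which asserts the exact equality $\|\xi_y - \xi_x\|_{\ell^1} = |A_x \Delta A_y|$ in the forward direction whereas only the inequality $\le$ holds for arbitrary $A_x$ (and only the inequality is needed).
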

\begin{proof}
It is clear that the existence of such a sequence is equivalent to the following.

For every $R>0$ and $\epsilon > 0$ there is an $S>0$ and a family of finitely supported functions
$\xi_x : K \to \N \cup \{ 0 \}$ satisfying these two conditions.
	\begin{enumerate}
		\item If $\xi_x(k) \neq 0$ then $\rho_{G,K}(x,k) < S$.
		\item If $d_G(x,y) < R$ then \[ \frac{\| \xi_{y} - \xi_{x} \|_{\ell^1} }{\| \xi_{x} \|_{\ell^1}} < \epsilon. \]
	\end{enumerate}

Assume $G$ has relative property A with respect to $\mH$, and fix $R > 0$ and $\epsilon > 0$.
Take $K$ and $A_x$ as in Definition \ref{defn:YuRelPropA}, and define $\xi_x : K \to \N \cup \{0\}$ by
\[ \xi_x(k) = \left| A_x \cap \left( {k} \times \N \right) \right|. \]
As such, $\| \xi_x \|_{\ell^1} = \left| A_x \right|$ and $\| \xi_y - \xi_x \|_{\ell^1} = \left| A_x \Delta A_y \right|$.
It is clear that the family $\xi_x$ satisfies the above conditions.

Conversely, fix $R>0$ and $\epsilon > 0$ and take $K$ and $\xi_x$ as above.
Set
\[ A_x = \left\{ (k,j) \, | \, 1 \leq j \leq \xi_x(k) \right\}. \]	
Then $\left| A_x \right| = \| \xi_x \|_{\ell^1}$ and $\left| A_x \Delta A_y \right| = \| \xi_y - \xi_x \|_{\ell^1}$.
Therefore $G$ has relative property A with respect to $\mH$.
\end{proof}

The following is a `Reiter's condition' type of characterization.
\begin{prop}\label{prop:ReitersConditionMany}
A discrete group $G$ has relative property A with respect to $\mH$ if and only if the following conditions are satisfied.
\begin{enumerate}
	\item There exists a countable space $K$ admitting a cofinite $G$ action with stabilizers the conjugates of elements of $\mH$.
	\item There exists a sequence of finitely supported functions $f_n : G \to \Prob(K)$ satisfying the following conditions.
	\begin{enumerate}
		\item For every $n$ there is a constant $S_n$ such that if $f_n(x)(k) \neq 0$, then $\rho_{G,K}(x,k) < S_n$.
		\item For every $R > 0$, $\| f_n(y) - f_n(x) \|_{\ell^1} \to 0$ as $n \to \infty$ uniformly on the
			set $\{ (x,y) \, | \, d_G( x, y ) < R \}$.
	\end{enumerate}
\end{enumerate}
\end{prop}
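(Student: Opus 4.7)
The plan is to use Proposition \ref{prop:GBCNW_propADefn} as a bridge, converting between $\N$-valued weight functions and $\Prob(K)$-valued functions by normalization and discretization.

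For the forward direction, assume $G$ has relative property A and let $\xi_{n,x} : K \to \N \cup \{0\}$ be the functions provided by Proposition \ref{prop:GBCNW_propADefn}. Set $f_n(x) = \xi_{n,x}/\|\xi_{n,x}\|_{\ell^1}$; this is a finitely supported function into $\Prob(K)$, and the support condition is inherited with the same $S_n$. The standard estimate
\[ \|f_n(y) - f_n(x)\|_{\ell^1} \leq 2\,\frac{\|\xi_{n,y} - \xi_{n,x}\|_{\ell^1}}{\|\xi_{n,x}\|_{\ell^1}}, \]
obtained by adding and subtracting $\xi_{n,y}/\|\xi_{n,x}\|_{\ell^1}$, converts the normalized convergence of $\xi_{n,\cdot}$ into the desired $\ell^1$-convergence of $f_n$.

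For the reverse direction, I would approximate the probability measures by scaled floor functions. The key preliminary observation is that cofiniteness of the $G$-action on $K$ yields a uniform-in-$x$ bound on support sizes: if $\rho_{G,K}(x,k) < S$, then by the definition one has $k = x \cdot g_{x^{-1}k} \cdot r_{x^{-1}k}$ with $\ell_G(g_{x^{-1}k}) < S$ and $r_{x^{-1}k} \in \mathcal{R}$, so
\[ |\{k \in K : \rho_{G,K}(x,k) < S\}| \leq |\{g \in G : \ell_G(g) < S\}| \cdot |\mathcal{R}|, \]
which is independent of $x$. Let $M_n$ be this bound for $S = S_n$, choose integers $N_n \geq n M_n$, and define $\xi_{n,x}(k) = \lfloor N_n f_n(x)(k) \rfloor$. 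Then $\xi_{n,x}$ is supported inside the support of $f_n(x)$ (so condition (a) persists with $S_n$), $\|\xi_{n,x}\|_{\ell^1} \geq N_n - M_n > 0$, and
\[ \|\xi_{n,y} - \xi_{n,x}\|_{\ell^1} \leq N_n \|f_n(y) - f_n(x)\|_{\ell^1} + 2 M_n. \]
Dividing gives a ratio bounded by $\tfrac{n}{n-1}\|f_n(y) - f_n(x)\|_{\ell^1} + \tfrac{2}{n-1}$, which tends to $0$ uniformly on each set $\{(x,y) : d_G(x,y)<R\}$. Proposition \ref{prop:GBCNW_propADefn} then yields relative property A.

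The main technical point is securing the uniform-in-$x$ support bound $M_n$, which ensures the rounding error remains negligible compared to the normalizing scalar $N_n$ simultaneously for all $x$. Once this observation is in hand, both directions reduce to routine normalization and discretization estimates combined with Proposition \ref{prop:GBCNW_propADefn}.
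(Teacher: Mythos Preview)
Your proof is correct and follows the same strategy as the paper: normalize the $\xi_{n,x}$ to obtain $f_n$ in the forward direction, and discretize $f_n$ to recover integer-valued $\xi_{n,x}$ in the reverse direction, invoking Proposition~\ref{prop:GBCNW_propADefn} in both cases. The forward direction is verbatim the paper's argument, including the factor-of-$2$ estimate via the standard add-and-subtract trick.

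The only noteworthy difference is in the reverse direction. The paper disposes of the discretization in one sentence (``by an approximation argument, we assume the existence of a positive integer $M$ such that for each $x$, $f_n(x)$ takes only the values $0/M, 1/M, \ldots, M/M$'') and then simply sets $\xi_{n,x} = M f_n(x)$. You instead carry out this approximation explicitly: you establish the uniform support bound $M_n$ from cofiniteness of the action (which the paper uses implicitly but does not justify), choose a scaling $N_n$ large relative to $M_n$, apply the floor function, and track the rounding error quantitatively. Your version is thus a rigorous unpacking of the paper's sketch rather than a different argument; it buys precision at the cost of a few more lines, and in particular makes transparent why a single $M$ works uniformly in $x$.
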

\begin{proof}
Suppose $G$ has relative property A with respect to $\mH$. Take $K$ and the sequence $\xi_{n,x} : K \to \N \cup \{0\}$ guaranteed
by Proposition \ref{prop:GBCNW_propADefn}.  Define $f_n : G \to \Prob(K)$ by $f_n(x) = \frac{1}{\| \xi_{n,x} \|_{\ell^1} } \xi_{n,x}$.
If $f_n(x)(k) \neq 0$ then $\xi_{n,x}(k) \neq 0$ so $\rho_{G,K}(x,k) < S_n$.
Moreover, 
\begin{align*}
\| f_n(x) - f_n(y) \|_{\ell^1} &= \| \frac{1}{\| \xi_{n,x} \|_{\ell^1} } \xi_{n,x} - 
		\frac{1}{\| \xi_{n,y} \|_{\ell^1} } \xi_{n,y} \|_{\ell^1}\\
	&=  \| \frac{1}{\| \xi_{n,x} \|_{\ell^1} } \xi_{n,x} - \frac{1}{\| \xi_{n,x} \|_{\ell^1} } \xi_{n,y} +
		 \frac{1}{\| \xi_{n,x} \|_{\ell^1} } \xi_{n,y} - \frac{1}{\| \xi_{n,y} \|_{\ell^1} } \xi_{n,y}\|_{\ell^1} \\
	&\leq \| \frac{1}{\| \xi_{n,x} \|_{\ell^1} } \xi_{n,x} - \frac{1}{\| \xi_{n,x} \|_{\ell^1} } \xi_{n,y} \|_{\ell^1} +
		 \| \frac{1}{\| \xi_{n,x} \|_{\ell^1} } \xi_{n,y} - \frac{1}{\| \xi_{n,y} \|_{\ell^1} } \xi_{n,y}\|_{\ell^1} \\
	&= \frac{ \| \xi_{n,x} - \xi_{n,y} \|_{\ell^1} }{ \| \xi_{n,x} \|_{\ell^1} } + 
		\left| \frac{1}{\| \xi_{n,x} \|_{\ell^1}} - \frac{1}{ \| \xi_{n,y} \|_{\ell^1}} \right| \| \xi_{n,y} \|_{\ell^1}\\
	&= \frac{ \| \xi_{n,x} - \xi_{n,y} \|_{\ell^1} }{ \| \xi_{n,x} \|_{\ell^1} } + 
		\left| \frac{\| \xi_{n,y}\|_{\ell^1} - \| \xi_{n,x} \|_{\ell^1}}{\| \xi_{n,x} \|_{\ell^1}} \right| \\
	&\leq \frac{ \| \xi_{n,x} - \xi_{n,y} \|_{\ell^1} }{ \| \xi_{n,x} \|_{\ell^1} } + 
		\frac{\| \xi_{n,y} - \xi_{n,x} \|_{\ell^1}}{\| \xi_{n,x} \|_{\ell^1}}\\
	&= 2 \frac{ \| \xi_{n,x} - \xi_{n,y} \|_{\ell^1} }{ \| \xi_{n,x} \|_{\ell^1} }	
\end{align*}		
This converges to $0$ uniformly as $n \to \infty$ on $\{ (x,y) \, | \, d_G(x,y) < R \}$ for all $R > 0$.

For the converse, assume such a $K$ and a sequence $f_n : G \to \Prob(K)$ exist.  Fix an $n > 0$.
As $f_n(x)(k) = 0$ whenever $\rho_{G,K}(x,k) \geq S_n$, each $f_n(x)$ is uniformly finitely supported.
By an approximation argument, we assume the existence of a positive integer $M$ such that for each $x$, $f_n(x)$ takes
only the values $0/M$, $1/M$, $\ldots$, $M/M$.
We define a function $\xi_{n,x} : K \to \N \cup \{ 0 \}$ by $\xi_{n,x}(k) = M f_n(x)(k)$
for $k \in K$.  By the properties of $f_{n,x}$ if $\rho_{G,K}(x,k) \geq S_n$ then $\xi_{n,x}(k) = 0$.
Also $\| \xi_{n,y} - \xi_{n,x} \|_{\ell^1}  = M \| f_n(y) - f_n(x) \|_{\ell^1}$ with $\| \xi_{n,x} \|_{\ell^1} = M$.
Thus for all $R > 0$,
\[ \frac{\| \xi_{n,y} - \xi_{n,x} \|_{\ell^1} }{\| \xi_{n,x} \|_{\ell^1}} = \| f_{n}(y) - f_{n}(x) \|_{\ell^1} \to 0\] 
uniformly on the set $\{ (x,y) \, | \, d_G( x, y ) < R \}$ as $n \to \infty$.
\end{proof}

In regards to the `amenable action on a compact Hausdorff space' characterization of property A, relative property A can also be
characterized in terms of its action on a compact Hausdorff space.
\begin{prop}\label{prop:OzawaRelPropAMany}
A countable group $G$ has relative property A with respect to $\mH$, if and only if the following hold.
\begin{enumerate}
	\item There exists a countable space $K$ admitting a cofinite $G$ action with stabilizers the conjugates of elements of $\mH$.
	\item There exist a compact Hausdorff $G$-space, $X$, and a sequence of weak$^*$-continuous functions, 
		$\xi_n : X \to \Prob(K)$, such that for all $g \in G$,  
		\[\lim_{n\to\infty} \sup_{x \in X} \| g \xi_n(x) - \xi_n( gx ) \|_{\ell^1} = 0.\]  
\end{enumerate}
\end{prop}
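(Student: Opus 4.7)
The plan is to prove both implications by routing through the Reiter-style characterization of Proposition~\ref{prop:ReitersConditionMany}: for ($\Leftarrow$) I extract a Reiter sequence by restricting $\xi_n$ to the orbit of a base point, and for ($\Rightarrow$) I extend the Reiter sequence to the Stone-Cech compactification $X = \beta G$.

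For the direction ($\Leftarrow$), fix a base point $x_0 \in X$ and define $f_n(g) := g \cdot \xi_n(g^{-1} x_0) \in \Prob(K)$. Using that $G$ acts by isometries on $\ell^1(K)$, a direct computation with $s = h^{-1}g$ gives
\[
\|f_n(g) - f_n(h)\|_{\ell^1} \;=\; \|s\,\xi_n(g^{-1}x_0) - \xi_n(s \cdot g^{-1}x_0)\|_{\ell^1} \;\leq\; \sup_{y \in X}\|s\,\xi_n(y) - \xi_n(sy)\|_{\ell^1}.
\]
Since $d_G(g,h) < R$ forces $s$ into the finite ball $B_G(e,R)$, the asymptotic equivariance yields uniform convergence to zero over such pairs. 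A truncation-and-renormalization argument, using density of finitely supported probability measures in $\Prob(K)$ together with the $G$-equivariance $\rho_{G,K}(gx, gk) = \rho_{G,K}(x,k)$, then replaces $f_n(g)$ by a finitely supported probability measure with support in a $\rho_{G,K}$-ball of $g$, completing the verification of Proposition~\ref{prop:ReitersConditionMany}.

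For the direction ($\Rightarrow$), take $X = \beta G$ with its canonical $G$-action. The Reiter functions $f_n : G \to \Prob(K)$ extend to weak-$*$ continuous maps $\xi_n : \beta G \to \Prob(K)$ by the universal property of the Stone-Cech compactification, once $\Prob(K)$ is endowed with the appropriate weak-$*$ compact topology. Weak-$*$ lower semicontinuity of the $\ell^1$-norm on differences of probability measures, combined with the density of $G$ in $\beta G$, reduces the asymptotic equivariance on $\beta G$ to the statement
\[
\sup_{x \in G}\,\|g f_n(x) - f_n(gx)\|_{\ell^1} \;\longrightarrow\; 0 \quad\text{as } n \to \infty
\]
for each fixed $g \in G$.

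The main obstacle is establishing this last estimate. Reiter's condition controls $\|f_n(x) - f_n(y)\|_{\ell^1}$ only when $d_G(x,y) < R$, whereas here we must compare the $G$-translate $g f_n(x)$ against the Reiter value $f_n(gx)$; the distance $d_G(x,gx) = \ell_G(x^{-1}gx)$ is not uniformly bounded in $x$ for non-central $g$. The $G$-equivariance of $\rho_{G,K}$ places both measures in $B_{\rho_{G,K}}(gx, S_n)$, so their difference is a signed measure of zero total mass supported on a $G$-translate of a fixed bounded set. Bridging the remaining gap is the heart of the argument; I expect it to be handled by replacing $f_n$ with a translation-covariantized version --- for instance a weak-$*$ cluster point of the $G$-orbit $\{s \mapsto s \cdot f_n(s^{-1} \cdot)\}$ inside $\Prob(K)^G$ --- which still satisfies Reiter's condition and is in addition asymptotically $G$-equivariant. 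This parallels the corresponding step in Ozawa's proof of the amenable-action characterization of absolute property A.
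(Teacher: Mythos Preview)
Your ($\Leftarrow$) direction is essentially the paper's argument: fixing a base point $x_0$ and setting $f_n(g) = g\,\xi_n(g^{-1}x_0)$ is exactly what the paper does after first reducing (via $g \mapsto gx_0$) to $X = \beta G$, and your truncation step corresponds to the paper's appeal to Lemma~3.8 of \cite{HR}.

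Your ($\Rightarrow$) direction, however, has a genuine gap, and the obstacle you identify is self-inflicted. You try to extend the Reiter functions $f_n$ themselves to $\beta G$ and then verify $\|g f_n(x) - f_n(gx)\| \to 0$; as you correctly observe, Reiter's condition gives no control here because $d_G(x,gx)$ is unbounded in $x$. Worse, the extension step already fails: the support of $f_n(x)$ lies in the $\rho_{G,K}$-ball about $x$, which moves with $x$, so $\{f_n(x)\}_{x\in G}$ does not sit in any weak-$*$-compact subset of $\Prob(K)$ and need not extend continuously to $\beta G$ with values in $\Prob(K)$.

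The missing idea, and the paper's one-line fix, is to \emph{twist} before extending: set $a_n(g) := g\,f_n(g^{-1})$. Then
\[
\|g\,a_n(x) - a_n(gx)\|_{\ell^1} \;=\; \|f_n(x^{-1}) - f_n(x^{-1}g^{-1})\|_{\ell^1},
\]
and now $d_G(x^{-1}, x^{-1}g^{-1}) = \ell_G(g^{-1})$ is independent of $x$, so Reiter's condition applies directly. Simultaneously, the support of $a_n(g) = g\,f_n(g^{-1})$ lies in $g \cdot \{k : \rho_{G,K}(g^{-1},k) < S_n\} = \{k : \rho_{G,K}(e,k) < S_n\}$ by the very $G$-equivariance of $\rho_{G,K}$ you invoked, a single finite set independent of $g$; hence $a_n$ does extend to $\beta G$. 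Note that this twist is precisely the inverse of the construction you used for ($\Leftarrow$), so no averaging or cluster-point machinery is needed.
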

\begin{proof}
We use Proposition \ref{prop:ReitersConditionMany} and argue as in the proof of Lemma 3.3 of \cite{HR}.
As in \cite{HR}, we may assume $X$ is $\BG$, the Stone-Cech compactification of $G$.
Let $K$ and $\xi_n : \BG \to \Prob(K)$ be as in the statement of the lemma.  For each $n$, 
define $b_n : G \to \Prob(K)$ as $b_n(g) = \xi_n(g)$, for $g \in G \subset \BG$.  As the image of $b_n$ sits inside a
weak$^*$-compact subset of $\Prob(K)$, by Lemma 3.8 of \cite{HR} we may assume that there is a finite
$J \subset K$ on which each $b_n(g)$ is supported.  Define $f_n : G \to \Prob(K)$ by
$f_n(g)= g b_n(g^{-1})$.  If $d_G(x,y) < R$, then we may write $y = x g$ for $d_G( e, g ) < R$.
\begin{align*}
	\| f_n(y) - f_n(x) \|_{\ell^1} &= \| y b_n( y^{-1} ) - x b_n( x^{-1} ) \|_{\ell^1}\\
		&= \| y \xi_n( y^{-1} ) - x \xi_n( x^{-1} ) \|_{\ell^1}\\
		&= \| x g \xi_n( g^{-1} x^{-1} ) - x \xi_n( x^{-1} ) \|_{\ell^1}\\
		&= \| g \xi_n( g^{-1} x^{-1} ) - \xi_n( x^{-1} ) \|_{\ell^1}\\
		&= \|  \xi_n( g^{-1} x^{-1} ) - g^{-1} \xi_n( x^{-1} ) \|_{\ell^1}.
\end{align*}
As $d_G$ is proper on $G$, we have that $\| f_n(y) - f_n(x) \|_{\ell^1} \to 0$ uniformly
on $\{ (x,y) \, | \, d_G( x, y ) < R \}$ as $n \to \infty$.
Moreover, $f_n(x)$ is supported only on $xF \subset K$. For $v \in xF$, $x^{-1} v \in F$ so
$d_G( e, g_{x^{-1}v} ) < L$ for some constant $L$, as $F$ is finite.
Set $S_n = L$.  Then if $\rho_{G,K}(x,v) \geq S_n$, $v \notin xF$ so $f_n(x)(v) = 0$.
By Proposition \ref{prop:ReitersConditionMany}, $G$ has relative property A with respect to $\mH$.

For the converse, assume $G$ has relative property A with respect to $\mH$.
Let $K$ and $f_n : G \to \Prob(K)$ be given by Proposition \ref{prop:ReitersConditionMany}.
Define $a_n : G \to \Prob(K)$ by $a_n(g) = g f_n(g^{-1})$.  Thus 
\begin{align*}
	\| g a_n(x) - a_n(gx) \|_{\ell^1} &= \| g x f_n(x^{-1}) - g x f_n( x^{-1} g^{-1} ) \|_{\ell^1}\\
		&= \| f_n( x^{-1} ) - f_n( x^{-1} g^{-1} ) \|_{\ell^1}.
\end{align*}
As $d_G( x^{-1}, x^{-1} g^{-1} ) = d_G( e, g^{-1} )$ for all $x \in G$, we have
\[ \lim_{n \to \infty} \sup_{x \in G} \| g a_n(x) - a_n(gx) \|_{\ell^1} = 0. \]
Further, if $\rho_{G,K}(e,k) > S_n$, $f_n(x^{-1})(x^{-1}k) = 0$.  This implies that for every $n$
there is a finite subset $F' \subset K$ such that $a_n(x)$ is supported in $F'$ for all
$x$.  As the image of $a_n$ sit in a weak$^*$-compact subset of $\Prob(K)$, these maps extend 
to $\xi_n : \BG \to \Prob(K)$ by the universality property of $\BG$.  That the $\xi_n$ satisfy
the properties above follow from the density of $G$ in $\BG$.
\end{proof}

We note that by the proof of Proposition 11 of \cite{Oz}, the assumption of `weak$^*$-continuous'
here can be relaxed to `Borel'.  

Recall that a finitely generated group is relatively hyperbolic with respect to a family of subgroups if the
coned-off graph is hyperbolic and satisfies the bounded coset penetration property \cite{Farb}.  By \cite{Oz},
relatively hyperbolic groups have relative property A with respect to their peripheral subgroups.
The compact space $X$ can be taken to be $\Delta \hat \Gamma$, the Gromov compactification of the 
associated coned-off Cayley graph  $\hat \Gamma$, and $K$ can be taken to be the vertex set of $\hat \Gamma$.

The following is a variant on the definition of a relatively amenable action, following \cite{BNNW}.
\begin{defn}\label{def:BNNWRelAmenabAction}
Suppose $X$ is a compact Hausdorff space admitting a $G$ action.  The action is amenable relative to the
subgroups $\mH$ if the following conditions are satisfied.
\begin{enumerate}
	\item There exists a countable set $K$ admitting a cofinite $G$ action with stabilizers the conjugates of elements of $\mH$.
	\item There exists a sequence of elements $f^n \in W_{00}(K,X)$ such that
		\begin{enumerate}
			\item $f^n_k \geq 0$ for all $n \in \N$ and $k \in K$.
			\item $\pi(f^n) = 1$ for every $n \in \N$.
			\item for each $g \in G$ we have $\| f^n - g f^n \|_{V} \to 0$.
		\end{enumerate}
\end{enumerate}
\end{defn}

\begin{prop}
Suppose $G$ acts on a compact Hausdorff space $X$.  Then $G$ satisfies condition (2) of Proposition \ref{prop:OzawaRelPropAMany} 
with respect to  $\mH$ if and only if this action is relatively amenable with respect to $\mH$.  In particular admitting
a relatively amenable action on a compact Hausdorff space is equivalent to relative property A.
\end{prop}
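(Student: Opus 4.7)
The plan is to exhibit a direct correspondence between sequences $\{f^n\} \subset W_{00}(K,X)$ satisfying Definition \ref{def:BNNWRelAmenabAction} and sequences of weak$^*$-continuous functions $\xi_n : X \to \Prob(K)$ satisfying condition (2) of Proposition \ref{prop:OzawaRelPropAMany}, via the formula $\xi_n(x)(k) = f^n_k(x)$, and to match the two invariance conditions. The final sentence of the proposition will then follow by combining this equivalence with Proposition \ref{prop:OzawaRelPropAMany} itself.

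For the implication ``relatively amenable action $\Rightarrow$ condition (2)'', start with $\{f^n\} \subset W_{00}(K,X)$ satisfying Definition \ref{def:BNNWRelAmenabAction} and define $\xi_n(x)(k) := f^n_k(x)$. Since each $f^n_k \in C(X)$ and $f^n$ has finite support, each $\xi_n$ is weak$^*$-continuous; the conditions $f^n_k \geq 0$ and $\pi(f^n) = 1$ say precisely that $\xi_n(x) \in \Prob(K)$ for every $x$. Using $(gf^n)_k(x) = f^n_{g^{-1}k}(g^{-1}x)$ together with the pushforward action $(g\mu)(k) = \mu(g^{-1}k)$ on $\Prob(K)$, a direct rewriting followed by the substitution $y = g^{-1}x$ yields
\[
\|f^n - g f^n\|_V \;=\; \sup_{x \in X} \|\xi_n(x) - g\,\xi_n(g^{-1}x)\|_{\ell^1} \;=\; \sup_{y \in X} \|g\,\xi_n(y) - \xi_n(gy)\|_{\ell^1},
\]
so the invariance condition in Definition \ref{def:BNNWRelAmenabAction} transports verbatim to the invariance condition in Proposition \ref{prop:OzawaRelPropAMany}.

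For the converse, given weak$^*$-continuous $\xi_n : X \to \Prob(K)$ satisfying the invariance condition, the literal formula $f^n_k(x) := \xi_n(x)(k)$ generally fails to produce an element of $W_{00}(K,X)$ because the measures $\xi_n(x)$ need not be uniformly finitely supported. The main technical step is to replace $\xi_n$ by a finitely supported approximation. Fix $\epsilon_n \to 0$. For each $n$, at each $x \in X$ choose a finite $F_x \subset K$ with $\xi_n(x)(F_x) > 1 - \epsilon_n$; weak$^*$-continuity guarantees that $\xi_n(y)(F_x) > 1 - \epsilon_n$ persists on an open neighborhood of $x$, and compactness of $X$ extracts a single finite $F_n \subset K$ with $\xi_n(x)(F_n) > 1 - \epsilon_n$ for \emph{every} $x \in X$. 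Setting $\tilde\xi_n(x)(k) = \xi_n(x)(k)/\xi_n(x)(F_n)$ on $F_n$ and $0$ elsewhere, a short $\ell^1$ computation gives $\|\tilde\xi_n(x) - \xi_n(x)\|_{\ell^1} < 2\epsilon_n$ uniformly in $x$. Now $f^n_k(x) := \tilde\xi_n(x)(k)$ lies in $W_{00}(K,X)$, satisfies $f^n_k \geq 0$ and $\pi(f^n) = 1$, and by the triangle inequality combined with the identity from the first direction,
\[
\|f^n - g f^n\|_V \;\leq\; 4\epsilon_n + \sup_{x \in X} \|g\,\xi_n(x) - \xi_n(gx)\|_{\ell^1},
\]
which tends to $0$ for every $g \in G$.

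The main obstacle is the uniform finite-support approximation: both the pointwise choice of $F_x$ and the passage to a global $F_n$ rely essentially on compactness of $X$ and weak$^*$-continuity of $\xi_n$; the rest of the argument is algebraic bookkeeping around the change-of-variable identity above. Once the equivalence of the two conditions is in hand, the last assertion of the proposition is immediate: Proposition \ref{prop:OzawaRelPropAMany} identifies relative property A with the existence of some compact Hausdorff $G$-space $X$ admitting a sequence $\xi_n$ of the prescribed form, which by the equivalence just proved is the same as the existence of some $X$ admitting a relatively amenable action in the sense of Definition \ref{def:BNNWRelAmenabAction}.
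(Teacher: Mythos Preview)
Your proof is correct and follows essentially the same route as the paper: set up the correspondence $\xi_n(x)(k)=f^n_k(x)$, verify the identity $\|f^n-gf^n\|_V=\sup_{y}\|g\xi_n(y)-\xi_n(gy)\|_{\ell^1}$, and handle the one nontrivial direction by a finite-support approximation followed by renormalization. The paper does the two implications in the opposite order and carries out the truncation via an exhaustion $\mathcal{F}_j\uparrow K$ rather than your compactness--open-cover argument, but this is only a cosmetic difference; in fact your use of weak$^*$-continuity and compactness to get $\xi_n(x)(F_n)>1-\epsilon_n$ uniformly, together with the explicit bound $\|f^n-gf^n\|_V\le 4\epsilon_n+\sup_x\|g\xi_n(x)-\xi_n(gx)\|_{\ell^1}$, is slightly more careful than the paper's treatment of the same step.
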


\begin{proof}
We first assume that $G$ has relative property A with respect to $\mH$.  This gives the sequence $\xi_n : X \to \Prob(K)$
of weak$^*$ continuous functions as above.  Define $S_n : K \to C(X)$ by $S_n(k)(x) = \xi_n(x)(k)$.  Then for each $x \in X$,
\[ \sum_{k \in K} S_n(k)(x) = \sum_{k \in K} \xi_n(x)(k) = 1. \]
Moreover for each $g \in G$,
\begin{align*}
 \| S_n - g S_n \|_V &= \sup_{x \in X} \sum_{k \in K} | S_n(k)(x) - (g S_n)(k)(x)|\\
 &= \sup_{x \in X} \sum_{k \in K} |\xi_n(x)(k) - (g \xi_n)(x)(k) |\\
 &= \sup_{x \in X} \sum_{k \in K} |\xi_n(x)(k) - (g \xi_n(g^{-1}x))(k) |\\
 &= \sup_{y \in X} \sum_{k \in K} | \xi_n(gy)(k) - (g \xi_n(y) )(k) |\\
 &= \sup_{y \in X} \|  \xi_n(gy) - g \xi_n(y) \|_{\ell^1} 
\end{align*}
Thus, this tends to zero as $n \to \infty$.  The $S_n$ so constructed need not be finitely supported, 
however they can be approximated by finitely supported functions.  Let $\mathcal{F}_{j}$ denote an 
increasing sequence of finite subsets of $K$ with $\cup \mathcal{F}_j = K$.  As for each $n$, 
$\sum_{k \in K} S_n(k) = 1_X$, for all sufficiently large $j$, $\sum_{k \in \mathcal{F}_j} S_n(k) > 0$.  
In particular the sum is bounded away from zero. For each $n$, let $j_n$ be one such sufficiently large value of $j$.
For each $j$, set \[ S_{n,j}(k) = \frac{1}{\sum_{k \in \mathcal{F}_j} S_n(k)} S_n(k) \]
for $k \in \mathcal{F}_j$, and $S_{n,j}(k) = 0$ for $k \notin \mathcal{F}_j$.  
Setting $f^n = S_{n,j_n}$ we find the action is relatively amenable.

For the other direction, given the sequence $f^n$ from the definition of a relatively amenable action,
define $\xi_n : X \to \Prob(K)$ by $\xi_n(x)(k) = f^n_k(x)$.  Reversing the above process yields
the result.
\end{proof}

\begin{defn}
Let $G$ be a countable group acting on a compact Hausdorff space $X$ by homeomorphisms.  A relative mean for the action,
with respect to the finite family of subgroups $\mH$, is an element $\mu \in W_{0}(K,X)^{**}$ such that
$\mu( \pi ) = 1$.   A relative mean $\mu$ is invariant if $\mu( g \phi ) = \mu( \phi )$ for every
$\phi \in W_{0}(K,X)^{*}$.
\end{defn}

\begin{lem}\label{lem:InvMeanRelPropA}
Let $G$ be a countable group acting on a compact Hausdorff space $X$ by homeomorphisms.  The action is relatively amenable
with respect to $\mH$ if and only if there is an invariant relative mean for the action with respect to $\mH$.
\end{lem}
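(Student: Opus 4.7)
The plan is a standard weak-$*$ compactness and convexity argument, in the spirit of the classical correspondence between F\o lner nets and invariant means.

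For the forward direction, suppose the action is relatively amenable, giving $f^n \in W_{00}(K,X)$ with $f^n_k \geq 0$, $\pi(f^n) = 1$, and $\| g f^n - f^n \|_V \to 0$ for each $g \in G$. Since $f^n_k \geq 0$ and $\sum_k f^n_k = 1_X$, each $f^n$ has $\| f^n \|_V = 1$, so under the canonical embedding $W_0(K,X) \hookrightarrow W_0(K,X)^{**}$ they form a bounded sequence in the double dual. By Banach--Alaoglu I extract a weak-$*$ cluster point $\mu$. Then $\mu(\pi) = \lim_n \pi(f^n) = 1$, so $\mu$ is a relative mean. For $G$-invariance, for any $\phi \in W_0(K,X)^*$ and $g \in G$,
\[ (g\phi - \phi)(f^n) = \phi(g^{-1} f^n - f^n), \]
which is bounded in absolute value by $\| \phi \| \, \| g^{-1} f^n - f^n \|_V \to 0$. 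Passing to the cluster point yields $\mu(g\phi) = \mu(\phi)$.

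For the reverse direction, I would use a Day-style convexity argument. Let $\mathcal{C} \subset W_{00}(K,X)$ be the convex set of $f$ with $f_k \geq 0$ for all $k$ and $\pi(f) = 1$. The first step is to show that every invariant relative mean $\mu$ lies in the weak-$*$ closure of $\mathcal{C}$ in $W_0(K,X)^{**}$. Given a net $\{ f_\alpha \} \subset \mathcal{C}$ with $f_\alpha \to \mu$ weak-$*$, $G$-equivariance of the canonical embedding implies $g f_\alpha \to g \mu = \mu$ weak-$*$ as well (since $\mu$ is $G$-fixed), so $g f_\alpha - f_\alpha \to 0$ in the weak topology of $W_0(K,X)$ for each $g \in G$. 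Fixing a finite subset $F \subset G$ and $\epsilon > 0$, I apply Mazur's theorem to the convex set
\[ \bigl\{ (g f - f)_{g \in F} : f \in \mathcal{C} \bigr\} \subset \bigoplus_{g \in F} W_0(K,X), \]
whose weak closure contains $0$, to obtain $f \in \mathcal{C}$ with $\| g f - f \|_V < \epsilon$ for all $g \in F$ simultaneously. Enumerating $G = \{ g_1, g_2, \ldots \}$ and choosing $f^n \in \mathcal{C}$ with $\| g_i f^n - f^n \|_V < 1/n$ for $1 \le i \le n$ produces the required F\o lner-type sequence.

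The main obstacle is the weak-$*$ density step: verifying that every invariant relative mean arises as a weak-$*$ limit of elements of $\mathcal{C}$. The natural route is via Goldstine's theorem together with a positivity argument exploiting the structure of $\pi$, since the functional $\pi \in W_0(K,X)^*$ selects a natural positive cone whose closed convex hull (after normalization by $\pi = 1$) should include all invariant means arising from the relatively amenable action. A secondary technical point is that Mazur's theorem yields elements of $W_0(K,X)$ rather than $W_{00}(K,X)$, so a final truncation step is needed to replace the $f^n$ by genuinely finitely supported functions preserving the defining conditions up to $\epsilon$; this mirrors the $\mathcal{F}_j$-approximation used at the end of the proof of the preceding proposition.
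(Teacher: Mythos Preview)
Your approach is essentially the paper's: Banach--Alaoglu for the forward direction, Goldstine plus Mazur for the converse. Two differences are worth noting. First, the paper does not try to approximate $\mu$ from the positive cone $\mathcal{C}$; it simply invokes Goldstine to obtain a bounded net $f^\lambda \in W_0(K,X)$, normalizes so that $\pi(f^\lambda)=1$, and runs the Mazur argument on that net. This completely sidesteps what you flag as the ``main obstacle'' (weak-$*$ density of $\mathcal{C}$): no positivity is needed at the Goldstine stage. Like your sketch, the paper's proof then does not explicitly recover the conditions $f^n_k \ge 0$ or membership in $W_{00}$, deferring these routine adjustments to \cite{BNNW}. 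Second, instead of applying Mazur on finite $F\subset G$ and then enumerating, the paper places $(f^\lambda - g f^\lambda)_{g\in G}$ in the countable product $\prod_{g\in G} W_0(K,X)$, a Fr\'echet space in the product-norm topology, and applies Mazur once there; since $G$ is countable, the two devices are equivalent.
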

\begin{proof}
This is almost verbatim from \cite{BNNW}. We include the proof for convenience. 

Suppose $G$ acts amenably on $X$ relative to $\mH$, and consider the sequence $f^n \in W_0(K,X)$ from Definition
\ref{def:BNNWRelAmenabAction}.  View the $f^n$ as elements of $W_0(K,X)^{**}$.  The unit ball in $W_0(K,X)^{**}$
is weak$^*$ compact, so there is a convergent subsequence $f_{n_k}$.  Let $\mu$ be the weak$^*$ limit of this
subsequence.  $\mu( \pi ) = 1$ since for each $n$ $\langle f^n , \pi \rangle = 1$.  Also 
\[| \langle f^{n_k} - g f^{n_k}, \phi \rangle | \leq \| f^{n_k} - g f^{n_k} \|_V \| \phi \|.\]  As the right-hand side
tends to zero, $\mu( g \phi ) = \mu( \phi )$.

For the converse, by Goldstine's theorem $\mu \in W_0(K,X)^{**}$ is the weak$^*$ limit of a bounded net of elements
$f^\lambda \in W_0(K,X)$.  Moreover, we can assume $\pi( f^\lambda ) = 1$.
As $\mu$ is invariant, $f^\lambda - g f^\lambda \to 0$ in the weak$^*$ topology.  As $f^\lambda - g f^\lambda$
are actually in $W_0(K,X)$ this convergence is in the weak topology on $W_0(K,X)$.

For each $\lambda$, consider $(f^\lambda - g f^\lambda )_{g \in G}$ as an element of $\prod_{g \in G} W_0(K,X)$.
In this space, this sequence of elements tends to zero in the product weak topology.  $\prod_{g \in G} W_0(K,X)$ 
is a Fr\'echet space in the product norm topology, so by Mazur's theorem there is a sequence $f^n$ of convex 
combinations of the $f^\lambda$ such that $(f^n - g f^n)_{g \in G}$ converges to zero in the Fr\'echet topology.  
Thus there exists a sequence $f^n$ of elements of $W_0(K,X)$ such that for every $g \in G$, $\| f^n - g f^n \| \to 0$.
\end{proof}

\begin{lem}
Suppose $G$ is a countable group with relative property A with respect to a finite family of subgroups, $\mH$.  
There exist a compact Hausdorff $G$-space, $X$, and a sequence of weak$^*$-continuous functions, 
		$\zeta_n : X \to \Prob(G/\mH)$, such that for all $g \in G$,  
		\[\lim_{n\to\infty} \sup_{x \in X} \| g \zeta_n(x) - \zeta_n( gx ) \|_{\ell^1} = 0.\]
In particular, $K$ in the definition of relative property A can be taken to be $G/\mH$.
\end{lem}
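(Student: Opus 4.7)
The plan is to reduce to Proposition \ref{prop:OzawaRelPropAMany}: apply that proposition to obtain a compact Hausdorff $G$-space $X$ together with weak$^*$-continuous maps $\xi_n \colon X \to \Prob(K)$ for the cofinite $G$-set $K$ supplied by the definition of relative property A, and then transfer these along a $G$-equivariant map $K \to G/\mH$ to produce the desired $\zeta_n$.

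To build the transfer map, I fix a set $\mathcal{R} \subset K$ of orbit representatives for the $G$-action on $K$, chosen so that $\operatorname{Stab}_G(r) \in \mH$ for every $r \in \mathcal{R}$. This normalization is always achievable: if $\operatorname{Stab}_G(r) = a H a^{-1}$ for some $H \in \mH$, then $a^{-1} r$ represents the same orbit and has stabilizer exactly $H$. Writing $H_{i(r)}$ for $\operatorname{Stab}_G(r) \in \mH$, define
\[ \phi \colon K \to G/\mH, \qquad \phi(g r) = g H_{i(r)} \in G/H_{i(r)} \subset G/\mH. \]
Well-definedness on each orbit follows from $\operatorname{Stab}_G(r) = H_{i(r)}$, and $G$-equivariance is immediate. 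Set $\zeta_n(x) = \phi_*(\xi_n(x))$, where $\phi_* \colon \Prob(K) \to \Prob(G/\mH)$ is the induced pushforward on probability measures; since $\phi_*$ preserves total mass, $\zeta_n(x)$ lies in $\Prob(G/\mH)$.

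What remains is to verify weak$^*$-continuity of the $\zeta_n$ and the required uniform convergence. Each restriction $\phi|_{Gr} \colon Gr \to G/H_{i(r)}$ is a $G$-equivariant bijection, so each orbit of $K$ contributes at most one point to any given fiber of $\phi$; since $K$ has only finitely many orbits, all fibers of $\phi$ are finite. Consequently $f \circ \phi \in c_0(K)$ whenever $f \in c_0(G/\mH)$, which gives weak$^*$-continuity of $\phi_*$ and hence of each $\zeta_n$. For the invariance estimate, $G$-equivariance of $\phi$ together with the contractivity of $\phi_*$ in the $\ell^1$-norm yields
\[ \| g \zeta_n(x) - \zeta_n(gx) \|_{\ell^1} = \| \phi_*\bigl( g \xi_n(x) - \xi_n(gx) \bigr) \|_{\ell^1} \leq \| g \xi_n(x) - \xi_n(gx) \|_{\ell^1}, \]
so the uniform convergence transfers from $\xi_n$ to $\zeta_n$. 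The only real technical point is constructing $\phi$ and observing that its fibers are finite; once this is in place, the remainder is routine coset bookkeeping and no substantial obstacle is anticipated.
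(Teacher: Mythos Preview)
Your proposal is correct and follows essentially the same approach as the paper: both construct a $G$-equivariant map $K \to G/\mH$ by sending orbit representatives with stabilizer $H_i$ to the coset $H_i$ and extending equivariantly, then define $\zeta_n$ as the pushforward of $\xi_n$ along this map and bound $\|g\zeta_n(x)-\zeta_n(gx)\|_{\ell^1}$ by $\|g\xi_n(x)-\xi_n(gx)\|_{\ell^1}$. Your version is slightly more explicit about the choice of representatives and supplies the finite-fiber argument for weak$^*$-continuity, which the paper leaves implicit, but there is no substantive difference in strategy.
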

\begin{proof}
The $G$ action on $G/\mH$ is cofinite with point stabilizers the conjugates of the elements of $\mH$.
Take a sequence of weak$^*$ continuous maps $\xi_n : \BG \to \Prob(K)$ as in Proposition \ref{prop:OzawaRelPropAMany}.
Construct a map $\pi : K \to G/\mH$ as follows.  For $u \in U$ with stabilizer $H_i$, set $\pi(u) = H_i$.
For other $k \in K$, there is a $u \in U$ and $g \in G$ with $k = gu$.  Set $\pi(k) = g \pi(u)$.
This is a well-defined, surjective $G$-map.

Define $\zeta_n : X \to \Prob(G/\mH)$ by
\[ \zeta_n(x)(y) = \sum_{\stackrel{k \in K}{\pi(k) = y}} \xi_n(k). \]
For $g \in G$ and $x \in X$,
\begin{align*}
(g \zeta_n(x))(y) - \zeta_n(gx)(y) &= \zeta_n(x)(g^{-1}y) - \zeta_n(gx)(y)\\
	&= \sum_{\stackrel{k \in K}{\pi(k) = g^{-1}y}} \xi_n(x)(k) - \sum_{\stackrel{k \in K}{\pi(k) = y}} \xi_n(gx)(k)\\
	&= \sum_{\stackrel{k \in K}{\pi(k) = y}} \xi_n(x)(g^{-1}k) - \sum_{\stackrel{k \in K}{\pi(k) = y}} \xi_n(gx)(k)\\
	&= \sum_{\stackrel{k \in K}{\pi(k) = y}} \left(  (g\xi_n(x))(k) - \xi_n(gx)(k) \right). \\
\| g \zeta_n(x) - \zeta_n(gx) \|_{\ell^1} &= \sum_{y \in G/\mH} |(g \zeta_n(x))(y) - \zeta_n(gx)(y)| \\
	&\leq \sum_{y \in G/\mH} \left| \sum_{\stackrel{k \in K}{\pi(k) = y}}\left( (g\xi_n(x))(k) - \xi_n(gx)(k) \right)\right| \\
	&\leq \sum_{y \in G/\mH} \sum_{\stackrel{k \in K}{\pi(k) = y}} | ( g\xi_n(x))(k) - \xi_n(gx)(k) | \\
	&= 	\sum_{k \in K} | ( g\xi_n(x))(k) - \xi_n(gx)(k) | \\
	&= \| g\xi_n(x) - \xi_n(gx) \|_{\ell^1}.
\end{align*}

Thus $\lim_{n \to \infty} \sup_{x \in X} \| g \zeta_n(x) - \zeta_n( gx ) \|_{\ell^1} = 0$.
\end{proof}
Due to this lemma, we will always be assuming $K = G/\mH$ unless stated otherwise.

\begin{prop}\label{prop:Extensions}
Suppose $H \normal G$.  If the quotient group $G/H$ has property A, then $G$ has relative property A with respect to $H$.
\end{prop}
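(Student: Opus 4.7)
The plan is to use $K = G/H$ as the cofinite $G$-set witnessing relative property A. Since $H$ is normal, every conjugate of $H$ is $H$ itself, and the $G$-action on $G/H$ has $H$ as the stabilizer of every point, so condition (1) of Definition \ref{defn:YuRelPropA} is immediate. The point of the proof is therefore to transport the finite sets witnessing property A of $G/H$ up to finite sets witnessing relative property A of $G$.

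First, I would recall (as noted in the paragraph preceding Definition \ref{defn:YuRelPropA}) that under the identification $K = G/H$, one has
\[
\rho_{G,K}(x, aH) \;=\; d_{G/H}(xH, aH),
\]
so $\rho_{G,K}$ is just the quotient metric read off from the second argument. Also, since $H$ is normal and the metric on $G$ is left-invariant and proper, the quotient metric $d_{G/H}$ is a well-defined left-invariant proper metric on $G/H$, and the quotient map $\pi \colon G \to G/H$ is $1$-Lipschitz, that is $d_{G/H}(\bar x, \bar y) \le d_G(x,y)$. These two facts are what allow the transfer.

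Next, fix $\epsilon > 0$ and $R > 0$. Since $G/H$ has property A in the sense of Yu (applied with the same $\epsilon$ and $R$), there exist an $S > 0$ and finite nonempty subsets $B_{\bar z} \subset (G/H) \times \N$, indexed by $\bar z \in G/H$, such that
\[
(k,j) \in B_{\bar z} \ \Longrightarrow \ d_{G/H}(\bar z, k) < S,
\qquad
d_{G/H}(\bar x, \bar y) < R \ \Longrightarrow \ \frac{|B_{\bar x} \Delta B_{\bar y}|}{|B_{\bar x}|} < \epsilon.
\]
Define the $G$-indexed family by pullback: $A_x := B_{\bar x}$ for $x \in G$, where $\bar x = xH$. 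Condition (2)(a) of Definition \ref{defn:YuRelPropA} is then automatic, since $(k,j) \in A_x$ forces $\rho_{G,K}(x,k) = d_{G/H}(\bar x, k) < S$. Condition (2)(b) follows because $d_G(x,y) < R$ forces $d_{G/H}(\bar x, \bar y) < R$ by the Lipschitz property of $\pi$, hence $|A_x \Delta A_y|/|A_x| = |B_{\bar x} \Delta B_{\bar y}|/|B_{\bar x}| < \epsilon$.

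There is essentially no obstacle beyond the bookkeeping just described; the only point worth double-checking is that $d_{G/H}$ is an honest proper metric of bounded geometry, which is where normality of $H$ and properness of $d_G$ are used, and that the identification $\rho_{G,K} = d_{G/H}$ already recorded in the paper applies verbatim because the orbit of the basepoint $H \in G/H$ is all of $G/H$.
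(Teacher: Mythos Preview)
Your proof is correct, but it proceeds along a different axis than the paper's argument. You work directly with the Yu-style Definition \ref{defn:YuRelPropA}: take the finite subsets $B_{\bar z}\subset (G/H)\times\N$ witnessing property A for the quotient and pull them back along the $1$-Lipschitz projection $\pi:G\to G/H$, using the identity $\rho_{G,K}(x,aH)=d_{G/H}(\bar x,aH)$ recorded just before the definition. The paper instead invokes the amenable-action characterization of Proposition \ref{prop:OzawaRelPropAMany}: property A for $Q=G/H$ gives weak$^*$-continuous $\xi_n:\beta Q\to\Prob(Q)$, and since the $G$-action on $Q$ (hence on $\beta Q$) factors through $Q$, the asymptotic invariance condition for $Q$ automatically yields the same condition for $G$. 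Your route is more elementary in that it never leaves the finite-subset formulation and avoids the Stone--\v Cech compactification altogether; the paper's route is shorter once Proposition \ref{prop:OzawaRelPropAMany} is in hand, and it makes transparent why normality matters (the $G$-action on $\beta Q$ literally coincides with the $Q$-action). Your closing remark about checking that $d_{G/H}$ is proper with bounded geometry is well placed: this is what ensures Yu's property A makes sense for $G/H$ with the quotient metric, and it follows from left-invariance together with finiteness of balls in $G$.
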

\begin{proof}
We endow $H$ with the restricted length from $G$, and $Q = G/H$ with the quotient length,
$\ell_Q(qH ) = \min \{ \ell_G( qh ) \, | \, h \in H \}$.

Suppose $Q$ has property A.  Then $Q$ acts topologically amenably on its Stone-Cech compactification $\beta Q$.
Let $\xi_n : \beta Q \to \Prob(Q)$ be a sequence of weak$^*$ continuous functions such that for all $qH \in Q$
\[ \lim_{n \to \infty} \sup_{x \in \beta Q} \| qH \xi_n(x) - \xi_n( qH x ) \|_{\ell^1} = 0. \]
The $G$ action on $Q$ by isometries extends to a $G$ action on $\beta Q$.  For any $g \in G$ and
any $qH \in Q$, $g qH = gH qH$, with $gH \in Q$.  In particular, for any $x \in \beta Q$
$\| g \xi_n(x) - \xi_n( gx ) \|_{\ell^1} = \| gH \xi_n(x) - \xi_n( gH x ) \|_{\ell^1}$.
Then for all $g \in G$,
\[ \lim_{n \to \infty} \sup_{x \in \beta Q} \| g \xi_n(x) - \xi_n( g x ) \|_{\ell^1} = 0. \]
Therefore $G$ has relative property A with respect to $H$.
\end{proof}

\begin{rmk*}
The converse of this proposition is not true.  Consider the example of $Q$ a finitely generated group without property A, \cite{Oz2},
and $G$ a finite rank free group projecting onto $Q$, with kernel $H$.  Corollary \ref{cor:PropAGivesRelPropA} below
shows that if $G$ has property A then
$G$ has relative property A with respect to $H$, which contradicts the converse.  This shows that the notion of relative
property A is fundamentally different than the quotient having property A.
\end{rmk*}

\begin{prop}
Suppose $\mH$ is a finite family of finite index subgroups of the countable group $G$.  Then $G$ has relative property $A$ with
respect to $\mH$.
\end{prop}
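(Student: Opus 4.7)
The plan is to invoke the Reiter-type characterization of relative property A given in Proposition \ref{prop:ReitersConditionMany} (together with the lemma just established that we may take $K = G/\mH$). Because $\mH$ is a finite family and each $H \in \mH$ has finite index in $G$, the disjoint union $K = G/\mH = \coprod_{H \in \mH} G/H$ is itself a \emph{finite} $G$-set. The $G$-action on $K$ is cofinite (the orbits are precisely the $G/H$) with point stabilizers the conjugates of the elements of $\mH$, so condition (1) of Proposition \ref{prop:ReitersConditionMany} is immediate.

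For condition (2), I would simply take the constant sequence
\[ f_n : G \to \Prob(K), \qquad f_n(x) = \frac{1}{|K|} \mathbf{1}_K \]
for every $n$ and every $x \in G$. Since $K$ is finite, each $f_n(x)$ is finitely supported on $K$ (trivially). The uniform convergence condition is also automatic: for all $x,y \in G$ we have $\|f_n(y) - f_n(x)\|_{\ell^1} = 0$, so the convergence to $0$ is trivial on every set $\{(x,y) \mid d_G(x,y) < R\}$.

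The only point that truly requires an argument is the support bound involving $\rho_{G,K}$. For each $w \in K$ the element $g_w \in G$ (the minimal-length representative translating $r_w$ to $w$) is fixed, so $\ell_G(g_w)$ is a well-defined natural number. Because $K$ is finite, we may set
\[ S = 1 + \max_{w \in K} \ell_G(g_w) < \infty. \]
Then for every $x \in G$ and every $k \in K$, $\rho_{G,K}(x,k) = \ell_G(g_{x^{-1}k}) \leq \max_{w \in K} \ell_G(g_w) < S$, so the support hypothesis of Proposition \ref{prop:ReitersConditionMany} is met with the single constant $S_n = S$.

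There is no substantive obstacle here; the proposition is essentially a sanity check that the definition subsumes the finite-index case. The only mild subtlety is that one must check the support bound is finite uniformly in $x$, which is precisely where the finiteness of $K$ (hence of the set of translators $\{g_w : w \in K\}$) is used. With these observations, Proposition \ref{prop:ReitersConditionMany} yields relative property A of $G$ with respect to $\mH$.
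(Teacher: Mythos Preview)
Your proof is correct and follows essentially the same idea as the paper: take the uniform probability measure $p$ on the finite set $K = G/\mH$ as a constant sequence. The only difference is that the paper invokes Proposition~\ref{prop:OzawaRelPropAMany} (with $X = \beta G$ and $\xi_n \equiv p$, using that $g \cdot p = p$ since $p$ is the uniform measure on a finite $G$-set), thereby avoiding any mention of $\rho_{G,K}$, whereas you invoke Proposition~\ref{prop:ReitersConditionMany} and therefore must verify the support bound---which you do correctly by noting that $K$ finite forces $\max_{w \in K} \ell_G(g_w) < \infty$.
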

\begin{proof}
Let $K = G/\mH$, and let $p$ be the uniform probability measure on the finite set $K$, and
let $\xi_n : \beta G \to \Prob(K)$ be the sequence of maps defined by $\xi_n(x) = p$
for all $n$ and $x$.  It is obvious that $\xi_n$ is weak$^*$ continuous and
\[ \lim_{n \to \infty} \sup_{x \in \beta G} \| g \xi_n(x) - \xi_n( gx ) \|_{\ell^1} = 0 \]
for all $g \in G$.
\end{proof}

\section{A cohomological characterization}
For each $i \in \I$, fix a vertex $k_i$ in $K$ such that the stabilizer
of $k_i$ in $G$ is $H_i$.  Then for $g \in \I G$, set $k_g = k_i$ for
$g \in G_i$.  Note that when $K = G/\mH$, $k_i$ is the coset $H_i$, and if $g \in G_i$
then $k_g$ is the coset $H_i$.

\begin{defn}
Let $G$ be a countable discrete group and let $H$ be a subgroup of $G$.  Assume that
$G$ acts by homeomorphisms on a compact Hausdorff topological space $X$.  Define
\[ J_r : St_1(\I G) \to N_{00}(K,X) \]
by $J_r( g_0, g_1 ) = \delta_{g_1 k_{g_1}} - \delta_{g_0 k_{g_0}}$.
\end{defn}
The function $J_r$ so defined is a bounded cocycle, thus represents a class $[J_r]$ in
$H^1_b(G; N_0(K,X)^{**} )$.  In the case that $\mH$ consists of a single subgroup which
is trivial, $[J_r]$ reduces to the Johnson class $[J]$ of \cite{BNNW}.  The class $[J_r]$
is called the relative Johnson class.

\begin{thm}\label{thm:RelJohnsonClass}
The class $[J_r] \in H^1_b(G; N_0(K,X)^{**})$ is trivial if and only if the action of
$G$ on $X$ is relatively amenable with respect to $\mH$.
\end{thm}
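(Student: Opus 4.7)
The plan is to adapt the proof of the absolute version from \cite{BNNW} to this relative setting. The essential content is that killing $[J_r]$ is the algebraic reflex of the existence of a $G$-invariant element of $W_0(K,X)^{**}$ with $\pi$-value $1$; Lemma \ref{lem:InvMeanRelPropA} then converts such an invariant relative mean into relative amenability of the action.

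First I would translate $[J_r]=0$ into concrete data using the standard resolution $St_\ast(\I G)$. A $\C G$-equivariant bounded map $\phi:St_0(\I G)\to N_0(K,X)^{**}$ is determined by $m_i:=\phi(1_i)\in N_0(K,X)^{**}$ for $i\in\I$ via $\phi(g)=g\cdot m_{i(g)}$, where $i(g)$ is the unique index with $g\in G_{i(g)}$; boundedness is automatic because $\I$ is finite. The equation $d\phi=J_r$ then reads
\[
g_1 m_{i(g_1)} - g_0 m_{i(g_0)} \;=\; \delta_{g_1 k_{i(g_1)}} - \delta_{g_0 k_{i(g_0)}}
\qquad\text{for all } g_0, g_1 \in \I G.
\]
Plugging in $g_0=1_{i_0}$, $g_1=1_{i_1}$ shows $m_i-\delta_{k_i}$ is independent of $i$; call this common element $\nu$. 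Varying $g_1$ with $g_0$ fixed then gives $g\cdot\nu=\nu$ for every $g\in G$. Since the short exact sequence $0\to N_0(K,X)\to W_0(K,X)\xrightarrow{\pi}\R\to 0$ splits (its range is one-dimensional), it remains exact on double duals, so $\nu\in W_0(K,X)^{**}$ with $\pi^{\ast\ast}(\nu)=-1$. Then $\mu:=-\nu$ is a $G$-invariant element of $W_0(K,X)^{**}$ with $\pi(\mu)=1$, i.e., an invariant relative mean, and Lemma \ref{lem:InvMeanRelPropA} yields relative amenability of the action.

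For the converse, starting from an invariant relative mean $\mu$, I would define $m_i:=\delta_{k_i}-\mu$; since $\pi^{\ast\ast}(m_i)=0$ each $m_i$ lies in $N_0(K,X)^{**}$, and $G$-equivariant extension produces a bounded $\phi$ with $\phi(1_i)=m_i$. Computing $d\phi(g_0,g_1)=g_1m_{i(g_1)}-g_0m_{i(g_0)}$ and using $G$-invariance of $\mu$ to replace $g_1\mu$ and $g_0\mu$ each by $\mu$, the $\mu$-contributions cancel, returning exactly $J_r(g_0,g_1)$, so $[J_r]=0$. The only real subtlety is bookkeeping along the chain $N_{00}\subset N_0\subset W_0$ and their double duals, in particular interpreting $m_i-\delta_{k_i}$ in $W_0(K,X)^{**}$ and identifying $\ker\pi^{\ast\ast}$ with $N_0(K,X)^{**}$; both are routine given that $\pi$ has one-dimensional range.
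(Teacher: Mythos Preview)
Your argument is correct. The paper takes a different, more conceptual route: rather than manipulating cochains directly, it invokes the long exact sequence in bounded cohomology associated to the short exact sequence of coefficients
\[
0 \to N_0(K,X)^{**} \to W_0(K,X)^{**} \xrightarrow{\pi^{**}} \C \to 0,
\]
identifies $[J_r]$ as the image of $[1]\in H^0_b(G;\C)$ under the connecting map $\delta$, and then reads off from exactness that $[J_r]=0$ if and only if $[1]$ lifts to a $G$-invariant element of $W_0(K,X)^{**}$, i.e., an invariant relative mean. Your approach instead unwinds this connecting homomorphism explicitly at the cochain level: the element $\nu=m_i-\delta_{k_i}$ you extract is precisely the lift of the cobounding $0$-cochain to $W_0(K,X)^{**}$, and its $G$-invariance is what the long exact sequence encodes abstractly. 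The paper's argument is shorter and avoids the index bookkeeping over $\I G$; your argument has the virtue of being self-contained and making the invariant mean visible without appealing to the machinery of the long exact sequence. Both converge on Lemma~\ref{lem:InvMeanRelPropA} to finish.
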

\begin{proof}
The short exact sequence \[ 0 \to N_0(K,X) \to W_0(K,X) \stackrel{\pi}{\to} \C \to 0 \]
gives the short exact sequence
\[ 0 \to N_0(K,X)^{**} \to W_0(K,X)^{**} \to \C \to 0. \]
This yields the following long-exact sequence.
\[ \cdots \to H^0_b(G;\C) \to H^1_b(G; N_0(K,X)^{**}) \to H^1_b(G; W_0(K,X)^{**} ) \to \cdots \]
The class $[J_r]$ is the image under the connecting homomorphism 
$\delta: H^0_b(G;\C) \to H^1_b(G; N_0(K,X)^{**})$ of the class $[1] \in H^0_b(G;\C)$ represented
by the constant function with value $1$ on $G$, $[J_r] = \delta [1]$.
By the exactness of the long exact sequence, $[J_r] = 0$ if and only if $[1] \in \im \pi^{**}$
where $\pi^{**} : H^0_b(G; W_0(K,X)^{**} ) \to H^0_b(G;\C)$ is induced by 
$\pi : W_0(K,X) \to \C$ as above.  As $H^0_b(G; W_0(K,X)^{**}) = ( W_0(K,X)^{**} )^G$ and
$H^0_b(G; \C) = \C$, $[J_r] = 0$ if and only if there exists an element $\mu \in W_0(K,X)^{**}$
with $\mu = g \mu$ and $\mu( \pi ) = 1$.
The equivalence now follows from Lemma \ref{lem:InvMeanRelPropA}.
\end{proof}

\begin{prop}\label{prop:RelJohnsonInRelCoh}
The image of $[J_r]$ under the restriction $H^1_b(G; N_0(K,X)^{**}) \to H^1_b(\mH; N_0(K,X)^{**})$
is trivial.  In particular $[J_r]$ lies in the image of the map
$H^1_b(G, \mH; N_0(K,X)^{**}) \to H^1_b(G; N_0(K,X)^{**})$.
\end{prop}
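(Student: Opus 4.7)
The plan is to exploit the fact, recorded in the proof of Theorem \ref{thm:RelJohnsonClass}, that $[J_r]$ is the image $\delta_G[1]$ of the class of the constant function $1$ under the connecting homomorphism $\delta_G : H^0_b(G;\C) \to H^1_b(G;N_0(K,X)^{**})$ associated to the short exact sequence of coefficient modules
\[ 0 \to N_0(K,X)^{**} \to W_0(K,X)^{**} \xrightarrow{\pi^{**}} \C \to 0. \]
Since this short exact sequence is natural and restriction is a morphism of $\delta$-functors, one has $\mathrm{res}_{H_i}\!\bigl(\delta_G[1]\bigr) = \delta_{H_i}\!\bigl(\mathrm{res}_{H_i}[1]\bigr)$ for every $H_i\in\mH$. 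So it suffices to prove that $\delta_{H_i}\!\bigl([1]_{H_i}\bigr)=0$ in $H^1_b(H_i; N_0(K,X)^{**})$.

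By exactness of the long-exact sequence for $H_i$ applied to the short exact sequence above, this vanishing is equivalent to showing that $[1]_{H_i}\in H^0_b(H_i;\C)=\C$ lies in the image of $\pi^{**}_{*}:H^0_b(H_i;W_0(K,X)^{**})\to H^0_b(H_i;\C)$, that is, that there exists an $H_i$-invariant element $\mu_i\in W_0(K,X)^{**}$ with $\pi^{**}(\mu_i)=1$. The key observation is that $k_i\in K$ is by construction fixed by $H_i$, so the element $\delta_{k_i}\in W_{00}(K,X)\subset W_0(K,X)^{**}$ is $H_i$-invariant, and $\pi(\delta_{k_i})=1$. Thus $\mu_i=\delta_{k_i}$ does the job, and $\mathrm{res}_{H_i}([J_r])=0$ for every $i\in\I$. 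Since $H^1_b(\mH; N_0(K,X)^{**})=\prod_{i\in\I} H^1_b(H_i;N_0(K,X)^{**})$, this proves the first assertion.

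The second assertion is then an immediate consequence of the Bieri–Eckmann type long-exact sequence recalled in Section 3,
\[ \cdots \to H^1_b(G,\mH;N_0(K,X)^{**}) \to H^1_b(G;N_0(K,X)^{**}) \to H^1_b(\mH;N_0(K,X)^{**}) \to \cdots, \]
since a class whose restriction vanishes lies in the image of the preceding map by exactness.

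The only genuinely delicate point is bookkeeping: one must verify the naturality square relating $\delta_G$ and $\delta_{H_i}$ at the level of resolutions (using, e.g., the projective resolution $St_*(\I G)$, which remains a resolution over $\C H_i$), and check that $\delta_{k_i}$ really represents the expected invariant vector after passing to the double dual. Both are routine; the conceptual content is simply the observation that $H_i$ fixes $k_i$, which produces a canonical $H_i$-invariant relative mean.
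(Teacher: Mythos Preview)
Your argument is correct, but the paper takes a more direct route. Rather than passing through the connecting homomorphism and invoking naturality, the paper simply evaluates the cocycle $J_r$ on a pair $(h_0,h_1)$ with $h_0,h_1\in H_i$: since $h_0,h_1$ both fix $k_i$, one has $J_r(h_0,h_1)=\delta_{h_1 k_i}-\delta_{h_0 k_i}=\delta_{k_i}-\delta_{k_i}=0$. Thus the \emph{cocycle} itself restricts to zero, not merely its cohomology class. The paper in fact remarks on this immediately after the proof: $J_r$ is already a relative cocycle at the chain level.

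Both arguments hinge on the same observation---that $H_i$ fixes $k_i$---but they deploy it differently. Your approach packages this as the existence of an $H_i$-invariant relative mean $\delta_{k_i}$, which is conceptually pleasing and makes clear the connection with Lemma~\ref{lem:InvMeanRelPropA}. The paper's direct computation, on the other hand, is shorter, avoids the naturality bookkeeping you flag at the end, and yields the chain-level vanishing, which is genuinely stronger and is used implicitly in what follows when treating $J_r$ as a relative cocycle.
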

\begin{proof}
Suppose $H \in \mH$. 
For $h_0, h_1 \in H$, $J_r(h_0, h_1) = \delta_{h_1 k_0} - \delta_{h_0 k_0}$.
As the stabilizer of $k_0$ in $G$ is $H$, this difference is $0$.
\end{proof}

Actually, more is shown in this proposition.  The restriction of $J_r$ to the subgroups is
identically zero.  Thus $J_r$ is a relative cocycle, not merely an absolute cocycle.

\begin{lem}
Let $G$ be a countable group, $\mH$ a family of subgroups of $G$.  Assume that
$G$ acts by homeomorphisms on a compact Hausdorff topological space $X$.
If the map $H^0_b(\mH; N_0(K,X)^{**}) \to H^1_b(G,\mH; N_0(K,X)^{**} )$ is surjective then the action of $G$ on $X$ is
relatively amenable with respect to $\mH$.
\end{lem}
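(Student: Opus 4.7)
The plan is to combine the long exact sequence in relative bounded cohomology reviewed in Section~3 with Theorem~\ref{thm:RelJohnsonClass} and Proposition~\ref{prop:RelJohnsonInRelCoh}. Let $V = N_0(K,X)^{**}$. The Bieri--Eckmann style long exact sequence for $(G, \mH)$ with coefficients in $V$ gives, in its low-degree portion, the exact sequence
\[ H^0_b(G;V) \to H^0_b(\mH;V) \xrightarrow{\alpha} H^1_b(G,\mH;V) \xrightarrow{\beta} H^1_b(G;V) \to H^1_b(\mH;V). \]

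First I would exploit the hypothesis: if $\alpha$ is surjective then by exactness at $H^1_b(G,\mH;V)$ the kernel of $\beta$ is all of $H^1_b(G,\mH;V)$, so $\beta$ is identically the zero map. Next I would invoke Proposition~\ref{prop:RelJohnsonInRelCoh}, which asserts that the relative Johnson class $[J_r] \in H^1_b(G; V)$ lies in the image of $\beta$. Since $\beta \equiv 0$, this forces $[J_r] = 0$ in $H^1_b(G; N_0(K,X)^{**})$.

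Finally I would apply Theorem~\ref{thm:RelJohnsonClass}, which states that vanishing of $[J_r]$ is equivalent to the action of $G$ on $X$ being relatively amenable with respect to $\mH$. This closes the argument. There is no real obstacle to overcome here beyond a two-step diagram chase; the conceptual content has already been packaged into Proposition~\ref{prop:RelJohnsonInRelCoh} (which places $[J_r]$ in the relative group) and Theorem~\ref{thm:RelJohnsonClass} (which translates vanishing of $[J_r]$ into the existence of an invariant relative mean, and hence via Lemma~\ref{lem:InvMeanRelPropA} into relative amenability of the action).
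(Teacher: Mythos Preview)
Your proposal is correct and follows essentially the same route as the paper's proof: both use Proposition~\ref{prop:RelJohnsonInRelCoh} to place $[J_r]$ in the image of the map $H^1_b(G,\mH;V)\to H^1_b(G;V)$, observe that surjectivity of the connecting map forces this image to be zero by exactness, and then apply Theorem~\ref{thm:RelJohnsonClass}. The only difference is that you write out the long exact sequence and the exactness argument explicitly, whereas the paper compresses this into a single sentence.
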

\begin{proof}
By Proposition \ref{prop:RelJohnsonInRelCoh}, $[J_r]$ is in the image of the map
$H^1_b(G, \mH; N_0(K,X)^{**}) \to H^1_b(G; N_0(K,X)^{**})$.  If $H^0_b(\mH; N_0(K,X)^{**}) \to H^1_b(G,\mH; N_0(K,X)^{**} )$
is surjective then $[J_r] = 0$.  The action is relatively amenable by Theorem \ref{thm:RelJohnsonClass}.
\end{proof}

Assume $\E$ is an $\ell^1$-geometric $G$-$C(X)$ module, and let $\tau \in \ell^\infty(K, \E^*)$.
Pick a vector $v \in \E$ and define a linear functional $\sigma_{\tau, v} : W_{00}(K,X) \to \C$ by
\[ \sigma_{\tau, v}(f) = \langle \sum_{k \in K} f_k \tau_k, v \rangle \]

\begin{defn}
Let $\E$ be an $\ell^1$-geometric $G$-$C(X)$ module, and let $\mu \in W_0(K,X)^{**}$ be a relative
invariant mean for the action.  Define a map $\mu_\E : \ell^\infty( K, \E^*) \to \E^*$ by
\[ \langle \mu_{\E}(\tau), v \rangle = \langle \mu, \sigma_{\tau, v} \rangle \]
for every $\tau \in \ell^\infty( K, \E^*)$ and $v \in \E$.
\end{defn}

\begin{lem}
Let $\E$ be an $\ell^1$-geometric $G$-$C(X)$ module, and let $\mu \in W_0(K,X)^{**}$ be an
invariant mean for the action.
\begin{enumerate}
	\item $\mu_\E$ is $G$-equivariant.
	\item If $\tau \in \ell^\infty(K, \E^* )$ is constant, then $\mu_{\E}( \tau ) = \tau_{H}$.  (As $\tau$ is
			constant, any $H \in \mH$ will give the same result. )
\end{enumerate}
\end{lem}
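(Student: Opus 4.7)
The plan is to unwind the definition of $\mu_\E$ and let the invariance of $\mu$ carry the burden for (1), while the normalization $\mu(\pi)=1$ together with the unitality of the $C(X)$-action on $\E$ does the work for (2). The whole argument is bookkeeping plus two short calculations; there is no essential analytic difficulty.

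For (1), I would first record the relevant $G$-actions. On $f \in W_0(K,X)$ the action is $(gf)_k(x) = f_{g^{-1}k}(g^{-1}x)$; on the dual $W_0(K,X)^*$ it is $(g\phi)(f) = \phi(g^{-1}f)$; on $\ell^\infty(K,\E^*)$ it is the diagonal action $(g\tau)_k = g\cdot\tau_{g^{-1}k}$; and on $\E^*$ the action is the dual $(g\xi)(v) = \xi(g^{-1}v)$. Starting from $\sigma_{g\tau,v}(f) = \sum_k (g\tau)_k(f_k v) = \sum_k \tau_{g^{-1}k}(g^{-1}(f_k v))$, I use $G$-equivariance of the $C(X)$-action on $\E$ to rewrite $g^{-1}(f_k v) = (g^{-1}f_k)(g^{-1}v)$ and then reindex by $\ell = g^{-1}k$. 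The identity $(g^{-1}f)_\ell = g^{-1}f_{g\ell}$ (read off from the $G$-action on $W_0(K,X)$) collapses this to
\[ \sigma_{g\tau, v}(f) \;=\; \sigma_{\tau,\, g^{-1}v}(g^{-1}f) \;=\; \bigl(g\cdot\sigma_{\tau,\, g^{-1}v}\bigr)(f). \]
Pairing with $\mu$ and invoking $\mu(g\phi)=\mu(\phi)$ gives $\langle \mu_\E(g\tau),v\rangle = \langle \mu_\E(\tau), g^{-1}v\rangle = \langle g\mu_\E(\tau), v\rangle$ for every $v \in \E$, which is exactly $G$-equivariance of $\mu_\E$.

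For (2), suppose $\tau_k = \eta$ for all $k \in K$. For $f \in W_{00}(K,X)$, the defining identity $\sum_k f_k = \pi(f)\,1_X$ together with unitality of the $C(X)$-module structure on $\E$ yields
\[ \sigma_{\tau,v}(f) \;=\; \sum_k \eta(f_k v) \;=\; \eta\!\left(\sum_k f_k \cdot v\right) \;=\; \eta(\pi(f)\,v) \;=\; \pi(f)\,\eta(v). \]
By continuity this equality extends from $W_{00}(K,X)$ to $W_0(K,X)$, so $\sigma_{\tau,v} = \eta(v)\,\pi$ as an element of $W_0(K,X)^*$. Applying $\mu$ and using $\mu(\pi)=1$ gives $\langle \mu_\E(\tau),v\rangle = \eta(v)$ for every $v$, whence $\mu_\E(\tau) = \eta$; and since $\tau$ is constant, $\eta = \tau_H$ for any $H \in \mH$. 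The only thing to be careful about is keeping the several actions consistent and invoking $G$-equivariance of the $C(X)$-action on $\E$ in the right spot so that the reindexing cleanly produces $g^{-1}f$; once the conventions are fixed both parts reduce to the displays above.
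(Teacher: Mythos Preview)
Your argument is correct: both parts reduce to the two short computations you display, and the only nontrivial inputs are the invariance $\mu(g\phi)=\mu(\phi)$ for (1) and the normalization $\mu(\pi)=1$ together with unitality of the $C(X)$-action for (2). The paper does not actually give a proof here---it simply writes ``This follows immediately as in \cite{BNNW}''---so what you have done is spell out the verification that the cited paper performs; your reindexing and use of $G$-equivariance of the $C(X)$-action are exactly the expected steps.
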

\begin{proof}
This follows immediately as in \cite{BNNW}.
\end{proof}

\begin{thm}
Let $G$ be a countable group, $\mH$ a family of subgroups of $G$.  Assume that
$G$ acts by homeomorphisms on a compact Hausdorff topological space $X$, that the action of $G$ on $X$ is relatively 
amenable with respect to $\mH$, and that $\E$ is an $\ell^1$-geometric $G$-$C(X)$ module.  Then the map 
$H^0_b(\mH; \E^{*}) \to H^1_b(G,\mH; \E^{*} )$ is surjective.
\end{thm}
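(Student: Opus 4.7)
The plan is to reinterpret the map through the identifications $H^0_b(\mH;\E^*) \cong \bHom_G(\C G/\mH,\E^*)$ and $H^1_b(G,\mH;\E^*) \cong \bHom_G(\Delta,\E^*)$, the latter being $H^p_b(G,\mH;V) \cong \bExt^{p-1}_{\CG}(\Delta,V)$ at $p=1$. Under these, the map in the theorem becomes restriction along the inclusion $\Delta \hookrightarrow \C G/\mH$, so the statement reduces to the following extension claim: every bounded $G$-equivariant $\phi : \Delta \to \E^*$ extends to a bounded $G$-equivariant $\tilde\phi : \C G/\mH \to \E^*$.

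The idea is to produce such an extension by averaging against the relative invariant mean. For each $k \in K$ the assignment $h_k(\xi) = \xi - \varepsilon(\xi) k$ defines a bounded linear retraction $\C G/\mH \to \Delta$, so $\phi \circ h_k$ is a bounded linear extension of $\phi$, although not $G$-equivariant. By Lemma \ref{lem:InvMeanRelPropA}, relative amenability of the action supplies a relative invariant mean $\mu \in W_0(K,X)^{**}$ and hence the $G$-equivariant map $\mu_\E : \ell^\infty(K,\E^*) \to \E^*$ of the preceding lemma, which fixes constant functions. I would then set
\[ \tilde\phi(\xi) \;=\; \mu_\E\bigl( k \mapsto \phi(\xi - \varepsilon(\xi) k) \bigr). \]
The estimate $\|\xi - \varepsilon(\xi) k\|_{\ell^1} \le 2\|\xi\|_{\ell^1}$, uniform in $k$, places the argument in $\ell^\infty(K,\E^*)$ and bounds $\|\tilde\phi\|$ by $2\|\mu_\E\|\|\phi\|$; for $\xi \in \Delta$ the function $k \mapsto \phi(\xi)$ is constant and $\mu_\E$ returns its value, so $\tilde\phi$ genuinely extends $\phi$.

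The main step is the $G$-equivariance of $\tilde\phi$. The augmentation is $G$-invariant, so $\varepsilon(g\xi) = \varepsilon(\xi)$, and the $G$-equivariance of $\phi$ lets one rewrite the translate $g \cdot \bigl( k \mapsto \phi(\xi - \varepsilon(\xi) k) \bigr)$ in the $G$-module $\ell^\infty(K,\E^*)$ as $k \mapsto \phi(g\xi - \varepsilon(g\xi) k)$; the $G$-equivariance of $\mu_\E$ then yields $g\tilde\phi(\xi) = \tilde\phi(g\xi)$. Conceptually, the failure of each individual retraction $h_k$ to be $G$-equivariant is measured entirely by the $G$-action on the basepoint $k \in K$, and averaging by a mean that is invariant for this action precisely cancels that failure. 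This is the point at which relative amenability is used in an essential way; without an invariant mean on $K$ there is no apparent way to patch the non-equivariant extensions $\phi \circ h_k$ together equivariantly.
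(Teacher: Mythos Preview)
Your argument is correct and is essentially the same as the paper's: both identify the map with restriction $\bHom_{\CG}(\C G/\mH,\E^*)\to\bHom_{\CG}(\Delta,\E^*)$ and build the extension $\tilde\phi(\xi)=\mu_\E\bigl(k\mapsto\phi(\xi-\varepsilon(\xi)k)\bigr)$, which on basis elements is exactly the paper's $(s\phi)(k_0)=\mu_\E\bigl(k\mapsto\phi(k_0-k)\bigr)$. Your framing in terms of averaging the non-equivariant retractions $h_k$ is a pleasant conceptual gloss, but the construction, the equivariance check via the diagonal $G$-action on $\ell^\infty(K,\E^*)$, and the verification that $\tilde\phi|_\Delta=\phi$ via constancy of $k\mapsto\phi(\xi)$ all coincide with the paper's proof.
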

\begin{proof}
$H^0_b(\mH; \E^*) \isom \bHom_{\CG}( \C G/\mH, \E^* )$, $H^1_b(G, \mH; \E^* ) \isom \bHom_{\CG}( \Delta, \E^* )$,
and the natural map $H^0_b(\mH; \E^*) \to H^1_b(G, \mH; \E^*)$ is induced by the restriction
$\bHom( \C G/\mH, \E^* ) \to \bHom( \Delta, \E^* )$.

For $\phi \in \bHom_{\CG}(\Delta, \E^* )$, define $\hat \phi : G/\mH \to \ell^\infty(G/\mH, \E^*)$ by
$(\hat \phi(k))(k') = \phi( k - k' )$.

For $g \in G$ and $k, k' \in G/\mH$, 
\begin{align*}
(\hat \phi(gk))(k') &= \phi( gk - k' )\\
	&= g \cdot \phi( k - g^{-1}k' )\\
	&= g \cdot ( \hat \phi(k) )(g^{-1}k' )\\
	&= ( g \hat \phi(k) )(k')
\end{align*}
In particular $\hat \phi(gk) = g \hat \phi(k)$ so $\hat \phi$ is $G$ equivariant.

Define a map $s : \bHom_{\CG}(\Delta, \E^*) \to \bHom_{\CG}(\C G/\mH, \E^*)$ via
\[ (s\phi)(k) = \mu_\E( \hat \phi(k) ). \]
Since $\hat \phi$ and $\mu_\E$ are $G$-equivariant we do have $s\phi \in \bHom_{\CG}(\C G/\mH, \E^* )$.

We consider $s\phi$ when restricted to $\Delta \subset \C G/\mH$.
\begin{align*}
(s\phi)(k-k') &= (s\phi)(k) - (s\phi)(k')\\
	&= \mu_\E( \hat\phi(k) ) - \mu_\E(\hat\phi(k'))\\
	&= \mu_\E( \hat\phi(k) - \hat\phi(k') )
\end{align*}
Here  \[(\hat\phi(k)-\hat\phi(k'))(w) = \phi(k - w) - \phi(k' - w ) = \phi( k - k' ). \]
In particular, $(\hat\phi(k) - \hat\phi(k'))(w)$ is independent of $w$.  Thus
$(s\phi)(k - k' ) = \phi(k -k' )$.  

This shows for every $\phi \in \bHom_{\CG}(\Delta, \E^*)$, there is an $s\phi \in \bHom_{\CG}(\C G/\mH, \E^*)$
with $s\phi$ restricting to $\phi$.  In particular, the map $H^0_b(\mH; \E^*) \to H^1_b(G, \mH; \E^*)$
is surjective.
\end{proof}

As $N_0(K,X)^*$ is an $\ell^1$-geometric $G$-$C(X)$ module, we have established the following.

\begin{cor}\label{cor:cohomchar}
Let $G$ be a countable group, $\mH$ a family of subgroups of $G$.  
The following are equivalent.
\begin{enumerate}
	\item The action of $G$ has relative property A with respect to $\mH$.
	\item For every $\ell^1$-geometric $G$-$C(X)$ module $\E$, the map $H^0_b(\mH;\E^*) \to H^1_b(G,\mH;\E^*)$ is surjective.
\end{enumerate}
\end{cor}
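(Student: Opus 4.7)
The plan is to derive the corollary by simply packaging the preceding theorem and lemma, each of which supplies one direction of the equivalence. No new analytic machinery is needed; the only genuine content has already been stored in those two results together with the earlier characterization of relative property A via relatively amenable actions on compact Hausdorff spaces.

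For the direction \emph{(1) implies (2)}, I would first invoke the earlier proposition identifying relative property A with the existence of a relatively amenable action of $G$ on a compact Hausdorff $G$-space with respect to $\mH$. Any such $X$ will do; a canonical choice is $X = \BG$ with its natural $G$-action. The preceding theorem then applies verbatim to this action, yielding that for every $\ell^1$-geometric $G$-$C(X)$ module $\E$ the restriction-type map $H^0_b(\mH;\E^*) \to H^1_b(G,\mH;\E^*)$ is surjective. This is essentially just quoting the theorem.

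For the converse \emph{(2) implies (1)}, the plan is to apply the hypothesis to the specific test module $\E = N_0(K,X)^*$, where $X$ is any fixed compact Hausdorff $G$-space (say $X = \BG$). The earlier lemma shows $N_0(K,X)$ is an $\ell^\infty$-geometric $G$-$C(X)$ module; passing to the Banach space dual produces an $\ell^1$-geometric $G$-$C(X)$ module, as noted in the remark immediately preceding the corollary statement. Condition (2) now delivers surjectivity of
\[ H^0_b(\mH; N_0(K,X)^{**}) \to H^1_b(G,\mH; N_0(K,X)^{**}). \]
The preceding lemma, which is phrased precisely for the coefficient module $N_0(K,X)^{**}$, then forces the action of $G$ on $X$ to be relatively amenable with respect to $\mH$. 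Invoking one more time the proposition identifying relatively amenable actions on compact Hausdorff spaces with relative property A, we conclude that $G$ has relative property A with respect to $\mH$, as desired.

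I do not expect a substantive obstacle. The only bookkeeping point worth emphasizing is the $\ell^\infty$-geometric to $\ell^1$-geometric passage under dualization, so that $N_0(K,X)^*$ qualifies as a legitimate test module for (2); this has already been recorded just above the corollary. Otherwise, the proof is a two-line assembly of the theorem, the lemma, and the relatively-amenable-action characterization of relative property A.
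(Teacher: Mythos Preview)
Your proposal is correct and matches the paper's approach exactly: the paper's entire proof is the single sentence ``As $N_0(K,X)^*$ is an $\ell^1$-geometric $G$-$C(X)$ module, we have established the following,'' which packages the preceding theorem (for the forward direction) and the preceding lemma applied to the test module $\E = N_0(K,X)^*$ (for the converse), with the equivalence of relative property A and relatively amenable actions supplying the bridge. Your identification of the $\ell^\infty$-to-$\ell^1$ dualization step as the one bookkeeping point is precisely the content of that one-line remark.
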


\begin{cor}\label{cor:RelativePropAGivesPropA}
Let $G$ be a countable group, $\mH$ a family of subgroups of $G$.  Suppose
that $G$ has relative property A with respect to $\mH$.
Then $G$ has property A if and only if each subgroup in $\mH$ has property A.
\end{cor}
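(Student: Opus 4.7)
The plan is to handle the two directions of the biconditional separately, using three tools from the excerpt: the cohomological characterization of relative property A (Corollary \ref{cor:cohomchar}), the Bieri-Eckmann type long-exact sequence for relative bounded cohomology from Section 3, and the Brodzki-Niblo-Nowak-Wright characterization of property A (Theorem \ref{thm:BNNW}).

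The forward direction is standard and will be dispatched first: property A passes to subgroups, since each $H \in \mH$ with the restricted length is an isometric subspace of $G$, and property A is inherited by subspaces of uniformly discrete bounded geometry metric spaces. Thus if $G$ has property A, each $H \in \mH$ does.

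For the backward direction, I will assume each $H \in \mH$ has property A and fix an arbitrary compact Hausdorff $G$-space $X$ together with an $\ell^1$-geometric $G$-$C(X)$ module $\E$. The first observation is that the $\ell^1$-geometric condition depends only on the $C(X)$-action, so $\E$ restricts to an $\ell^1$-geometric $H$-$C(X)$ module for every $H \in \mH$. Applying Theorem \ref{thm:BNNW} to each subgroup individually then gives $H^p_b(H;\E^*) = 0$ for every $p \geq 1$, and thus $H^p_b(\mH;\E^*) = 0$ for $p \geq 1$. Meanwhile, relative property A together with Corollary \ref{cor:cohomchar} makes the map $H^0_b(\mH;\E^*) \to H^1_b(G,\mH;\E^*)$ surjective. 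Substituting both facts into the four-term segment
\[
H^0_b(\mH;\E^*) \to H^1_b(G,\mH;\E^*) \to H^1_b(G;\E^*) \to H^1_b(\mH;\E^*) = 0
\]
of the long-exact sequence, surjectivity on the left and vanishing on the right force $H^1_b(G;\E^*) = 0$.

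To conclude, I will specialize to $X = \beta G$ with its canonical $G$-action. The module $N_0(G,\beta G)$ is $\ell^\infty$-geometric, so its dual $\E := N_0(G,\beta G)^*$ is $\ell^1$-geometric; the previous step then yields $H^1_b(G; N_0(G,\beta G)^{**}) = 0$. Consequently the Johnson class $[J]$ is trivial, and by Theorem \ref{thm:BNNW} this is equivalent to $G$ having property A. The main subtlety worth flagging is that the argument only produces degree-one vanishing of $H^*_b(G;\E^*)$, rather than vanishing in all degrees; this would appear to be insufficient for condition (3) of Theorem \ref{thm:BNNW}, but condition (2) of that theorem is a statement about a single class in $H^1_b$, so degree one is exactly what is needed. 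In particular, the plan sidesteps the need for any independent higher-degree vanishing of the relative groups $H^p_b(G,\mH;\E^*)$, which Corollary \ref{cor:cohomchar} does not on its own provide.
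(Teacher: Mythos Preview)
Your argument is correct and follows the same route as the paper's proof: the long exact sequence in relative bounded cohomology, surjectivity of the connecting map from Corollary \ref{cor:cohomchar}, vanishing of $H^1_b(\mH;\E^*)$ from Theorem \ref{thm:BNNW} applied to each subgroup, and then Theorem \ref{thm:BNNW} once more for $G$. Your explicit specialization to $X=\beta G$ and invocation of condition (2) (the vanishing of the Johnson class) rather than condition (3) is a genuine clarification over the paper, since the long exact sequence argument only yields degree-one vanishing.

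One small caution about wording: the phrase ``fix an arbitrary compact Hausdorff $G$-space $X$'' claims more than is available. Theorem \ref{thm:BNNW} for a subgroup $H$ gives $H^p_b(H;\E^*)=0$ only when the $H$-action on $X$ is topologically amenable, and property A of $H$ does not guarantee this for a general $X$ (for $X$ a point the $\ell^1$-geometric condition is vacuous and the vanishing would force $H$ to be amenable). Likewise, Corollary \ref{cor:cohomchar} yields surjectivity only for those $X$ on which $G$ acts relatively amenably. Both conditions are satisfied for $X=\beta G$: the second by Proposition \ref{prop:OzawaRelPropAMany}, and the first because the left-$H$-equivariant retraction $G\to H$ given by coset representatives extends to an $H$-map $\beta G\to\beta H$, along which the amenable $H$-action on $\beta H$ pulls back. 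Since you ultimately only need and use $X=\beta G$, this is a presentational issue rather than a gap in the logic; the paper's own proof is equally terse about the choice of $X$ and about why $H^1_b(\mH;\E^*)=0$.
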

\begin{proof}
If $G$ has property A, it is well known that each subgroup of $G$ also has property A.
For the converse, suppose each $H_i \in \mH$ has property A.  For each $\ell^1$-geometric $G$-$C(X)$ module
$\E$ consider the long-exact sequence
\[ \cdots \to H^0_b(\mH; \E^*) \stackrel{\delta}{\to} H^1_b(G,\mH; \E^* ) 
	\stackrel{d}{\to} H^1_b(G; \E^*) \stackrel{r}{\to} H^1_b( \mH; \E^* ) \to \cdots. \]
As $G$ has relative property A with respect to $\mH$, $\delta$ is surjective, thus $d$ is the zero map and $r$ is injective.
Since each $H_i \in \mH$ has property A, $H^1_b(\mH; \E^*) = 0$, thus $H^1_b(G;\E^*)$ is trivial.  
The result follows from Theorem \ref{thm:BNNW}.
\end{proof}

\begin{cor}\label{cor:PropAGivesRelPropA}
Let $G$ be a countable group.  If $G$ has property A, then $G$ has relative
property A with respect to any finite family of subgroups.
\end{cor}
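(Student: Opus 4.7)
The plan is to deduce this directly from the cohomological characterization in Corollary \ref{cor:cohomchar}, combined with Theorem \ref{thm:BNNW} and the long-exact sequence in relative bounded cohomology from Section 3. The key observation is that property A of $G$ forces vanishing of $H^1_b(G;\E^*)$ for all $\ell^1$-geometric coefficient modules, and this vanishing is exactly what converts the relevant portion of the long-exact sequence into a surjection.

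First, I would verify condition (1) of Definition \ref{defn:YuRelPropA} by taking $K = G/\mH$, which is a countable cofinite $G$-set whose point stabilizers are precisely the conjugates of the subgroups in $\mH$. Then, given any $\ell^1$-geometric $G$-$C(X)$ module $\E$, the hypothesis that $G$ has property A combined with Theorem \ref{thm:BNNW} yields $H^1_b(G;\E^*) = 0$. Now I would invoke the Bieri--Eckmann type long-exact sequence from Section 3, in the stretch
\[ \cdots \to H^0_b(\mH;\E^*) \to H^1_b(G,\mH;\E^*) \to H^1_b(G;\E^*) \to \cdots \]
Since the rightmost term vanishes, the connecting map $H^0_b(\mH;\E^*) \to H^1_b(G,\mH;\E^*)$ is surjective.

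Because this surjectivity holds for every $\ell^1$-geometric $G$-$C(X)$ module $\E$, Corollary \ref{cor:cohomchar} immediately yields that $G$ has relative property A with respect to $\mH$. There is essentially no obstacle here, as the corollary is a formal consequence of two results already established in the paper; the only minor point to verify is that the choice $K = G/\mH$ is compatible with the formulation of the long-exact sequence used in Corollary \ref{cor:cohomchar}, which was already noted to be the standard choice.
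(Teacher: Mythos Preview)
Your proposal is correct and follows essentially the same route as the paper's proof: use Theorem~\ref{thm:BNNW} to get $H^1_b(G;\E^*)=0$ from property~A, feed this into the long-exact sequence to obtain surjectivity of $H^0_b(\mH;\E^*)\to H^1_b(G,\mH;\E^*)$, and conclude via Corollary~\ref{cor:cohomchar}. The only difference is that you spell out the choice $K=G/\mH$ for condition~(1), which the paper leaves implicit.
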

\begin{proof}
Suppose $\E$ is an $\ell^1$-geometric $G$-$C(X)$ module, and consider the long-exact sequence
\[ \cdots \to H^0_b(\mH; \E^*) \stackrel{\delta}{\to} H^1_b(G,\mH; \E^* ) 
	\stackrel{d}{\to} H^1_b(G; \E^*) \stackrel{r}{\to} H^1_b( \mH; \E^* ) \to \cdots. \]
If $G$ has property A, then $H^1_b(G; \E^* ) = 0$, whence $H^0_b(\mH; \E^*) \to H^1_b(G,\mH;\E^*)$
is surjective.  As this holds for all $\E$, the result follows from Corollary \ref{cor:cohomchar}.
\end{proof}

From Corollary \ref{cor:RelativePropAGivesPropA} and Proposition \ref{prop:Extensions} we immediately
obtain the following well-known theorem.
\begin{cor}
Suppose $G$ is an extension of the countable group $H$ by the countable group $K$.  Then $G$ has 
property A if and only if each $H$ and $K$ have property A.
\end{cor}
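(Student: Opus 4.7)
The plan is to derive this well-known theorem by formally combining Proposition \ref{prop:Extensions} with Corollary \ref{cor:RelativePropAGivesPropA}, exactly as the authors indicate. Write the extension as $1 \to N \to G \to Q \to 1$, so that $\{N,Q\} = \{H,K\}$, with $N$ normal in $G$ and $Q \cong G/N$ the quotient.

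For the ($\Leftarrow$) direction, assume that both $N$ and $Q$ have property A. Since $Q \cong G/N$ has property A, Proposition \ref{prop:Extensions} yields that $G$ has relative property A with respect to the single-subgroup family $\{N\}$. Corollary \ref{cor:RelativePropAGivesPropA}, applied with $\mH = \{N\}$ and using that $N$ itself has property A, then gives that $G$ has property A.

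For the ($\Rightarrow$) direction, $N$ inherits property A as a subgroup of $G$, while $Q \cong G/N$ inherits property A by the classical preservation of property A (equivalently, by \cite{GK,Oz2,KW}, exactness) under taking quotients in the category of countable discrete groups.

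There is no substantive obstacle here: the corollary is a formal consequence of the two cited results together with standard hereditary properties of property A. The actual work has already been carried out in Proposition \ref{prop:Extensions} (which produces the relative property A from topological amenability of the $Q$-action on $\beta Q$ pulled back through $G \to Q$) and in Corollary \ref{cor:RelativePropAGivesPropA} (which descends from the relative notion to property A itself via the long exact sequence in bounded cohomology). The only bookkeeping point is confirming the convention for what ``extension of $H$ by $K$'' means; either choice of which of $H, K$ is normal leads to the same argument, since Proposition \ref{prop:Extensions} is applied to whichever is normal.
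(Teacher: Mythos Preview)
Your $(\Leftarrow)$ direction is correct and is exactly what the paper intends: Proposition~\ref{prop:Extensions} gives relative property A of $G$ with respect to $\{N\}$ from property A of $Q\cong G/N$, and Corollary~\ref{cor:RelativePropAGivesPropA} then upgrades this to property A of $G$ using property A of $N$.

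Your $(\Rightarrow)$ direction, however, contains a genuine error. Property A (equivalently, exactness) is \emph{not} preserved under passing to quotients of countable discrete groups. The paper itself supplies a counterexample in the remark immediately following Proposition~\ref{prop:Extensions}: a finite-rank free group $G$ has property A but surjects onto a group $Q$ without property A. Thus your assertion that ``$Q \cong G/N$ inherits property A by the classical preservation of property A \ldots\ under taking quotients'' is false, and no such result appears in \cite{GK,Oz2,KW}. The two cited inputs, Proposition~\ref{prop:Extensions} and Corollary~\ref{cor:RelativePropAGivesPropA}, combine to give only the implication ``$N$ and $G/N$ have property A $\Rightarrow$ $G$ has property A'' (together with the trivial passage of property A to the subgroup $N$); they say nothing about the quotient when one starts from property A of $G$. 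In other words, the ``only if'' half of the corollary, read literally, does not follow from the results invoked and is in fact false for the quotient; the well-known theorem being recovered here is really the $(\Leftarrow)$ direction.
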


\begin{thm}\label{thm:PropAComplexRelPropA}
Suppose the finitely generated group $G$ acts cocompactly on a uniformly discrete metric space
with bounded geometry, $(X, d_X)$.  Pick a family of representatives $\mathcal{R}$ of orbits of $G$ in
$X$.  Let $\mH$ be the family of subgroups which each stabilize an element of $\mathcal{R}$.
If $(X,d_X)$ has property A, then $G$ has relative property A with respect to $\mH$.
\end{thm}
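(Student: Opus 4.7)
My plan is to use the Reiter/BCGNW-type characterization of relative property A from Proposition \ref{prop:GBCNW_propADefn} and to directly pull back the property A data on $X$ to $G$-indexed data via an orbit map.

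First I would take $K = X$. By hypothesis, $G$ acts cofinitely on $X$ with stabilizers the conjugates of the elements of $\mH$ (this is exactly how $\mH$ was defined), so the first condition in Proposition \ref{prop:GBCNW_propADefn} is satisfied. Next, since $G$ is finitely generated and acts on $X$ by isometries, picking a basepoint $v_0 \in \mathcal{R}$ and taking $C = \max_{s \in S} d_X(v_0, s v_0)$ for a finite generating set $S$ gives a coarse Lipschitz estimate
\[ d_X(x v_0, y v_0) = d_X(v_0, x^{-1} y v_0) \leq C \, d_G(x, y) \qquad \text{for all } x, y \in G. \]
Because $(X, d_X)$ has property A, the BCGNW characterization for spaces provides finitely supported functions $\eta_{n, v} : X \to \N \cup \{ 0 \}$, indexed by $v \in X$, such that $\eta_{n, v}(w) \neq 0$ forces $d_X(v, w) < S'_n$, and such that for every $R' > 0$,
\[ \frac{ \| \eta_{n, v} - \eta_{n, w} \|_{\ell^1} }{ \| \eta_{n, v} \|_{\ell^1} } \to 0 \]
uniformly on $\{ (v, w) \mid d_X(v, w) < R' \}$ as $n \to \infty$.

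I would then define $\xi_{n, x} := \eta_{n, x v_0}$ for $x \in G$. The uniform convergence condition of Proposition \ref{prop:GBCNW_propADefn} is immediate: if $d_G(x, y) < R$, then $d_X(x v_0, y v_0) < C R$, so the ratio $\| \xi_{n, y} - \xi_{n, x} \|_{\ell^1} / \| \xi_{n, x} \|_{\ell^1}$ inherits uniform decay from the space-level property A. For the support condition, suppose $\xi_{n, x}(k) \neq 0$, so $d_X(x v_0, k) < S'_n$, equivalently $d_X(v_0, x^{-1} k) < S'_n$. Because $X$ is uniformly discrete with bounded geometry, the ball $B_X(v_0, S'_n)$ is finite; writing each $y$ in this ball as $y = g_y r_y$ with $g_y$ minimal, I can set
\[ S_n := \max \bigl\{ \ell_G(g_y) \;\bigl|\; y \in B_X(v_0, S'_n) \bigr\}, \]
which is finite. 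Since $\rho_{G, K}$ is $G$-invariant in its first argument in the sense that $\rho_{G, K}(x, k) = \ell_G(g_{x^{-1} k}) = \rho_{G, K}(e, x^{-1} k)$, I obtain $\rho_{G, K}(x, k) \leq S_n$, as required.

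The steps involving the coarse Lipschitz bound for the orbit map and the convergence estimate are routine. The main obstacle, and the only substantive point, is the translation of a bound $d_X(x v_0, k) < S'_n$ into a uniform bound $\rho_{G, K}(x, k) < S_n$, since $\rho_{G, K}$ is defined in a somewhat combinatorial way through the chosen orbit representatives. This is where the twin hypotheses of bounded geometry (making the relevant balls in $X$ finite) and cocompactness (ensuring $\mathcal{R}$, hence $\mH$, is finite and that every $x^{-1}k$ lies in finitely many orbit types near $v_0$) are essential; once they are in place the supremum $S_n$ is genuinely finite and Proposition \ref{prop:GBCNW_propADefn} applies to conclude that $G$ has relative property A with respect to $\mH$.
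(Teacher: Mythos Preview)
Your argument is correct. You invoke Proposition~\ref{prop:GBCNW_propADefn} and pull back the BCGNW--type property~A data along the orbit map $x\mapsto xv_0$; the only nontrivial step, as you identify, is converting the metric bound $d_X(v_0,x^{-1}k)<S'_n$ into a bound on $\rho_{G,K}(x,k)$, and you do this cleanly using finiteness of the ball $B_X(v_0,S'_n)$. (One cosmetic point: you conclude $\rho_{G,K}(x,k)\le S_n$ while the proposition asks for strict inequality; replace $S_n$ by $S_n+1$.)

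The paper takes a different route. It works with the Ozawa-type characterization in Proposition~\ref{prop:OzawaRelPropAMany}: starting from Reiter-type functions $\xi_n:X\to\Prob(X)$, it defines $\mu_n(g)=g\,\xi_n(g^{-1}x_0)$ on $G$, checks approximate equivariance, and then extends to $\beta G$. The twist by $g$ is the essential trick there: it forces every $\mu_n(g)$ to be supported in the fixed finite set $B_{S_n}(x_0)$, which is what allows the extension to the Stone--\v{C}ech boundary. Your untwisted pullback $\xi_{n,x}=\eta_{n,xv_0}$ has support moving with $x$, so it would not directly feed into Proposition~\ref{prop:OzawaRelPropAMany}; but since Proposition~\ref{prop:GBCNW_propADefn} phrases the support condition via $\rho_{G,K}(x,k)$ rather than via a fixed window, no twist is needed and no compactification enters. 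Your approach is therefore somewhat more elementary, while the paper's connects the result more visibly to the amenable-action-on-a-compact-space picture that drives the cohomological machinery in Section~5.
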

\begin{proof}
If $X$ has property A, then there exist a sequence of functions $\xi_n : X \to \Prob(X)$
and a sequence of constants $S_n > 0$ such that the following are satisfied.
\begin{enumerate}
	\item For each $n$ and $x$, the support of $\xi_n(x)$ is contained in the ball $B_{S_n}(x)$.
	\item For each $R>0$, $\| \xi_n(x) - \xi_n(y) \|_{\ell^1} \to 0$ uniformly on the set
		$\{ (x,y) \, | \, d_X(x,y) < R \}$.
\end{enumerate}

Fix a basepoint $x_0 \in X$, and consider the functions $\mu_n : G \to \Prob(X)$ defined by
\[ \mu_n(g) := g \xi_n(g^{-1}x_0 ). \]

\begin{align*}
	\| g \mu_n(w) - \mu_n(gw) \|_{\ell^1} &= \| g w \xi_n( w^{-1} x_0 ) - gw \xi_n( w^{-1} g^{-1} x_0 ) \|_{\ell^1} \\
		&= \| \xi_n( w^{-1} x_0 ) - \xi_n( w^{-1} g^{-1} x_0 ) \|_{\ell^1}
\end{align*}

There is a constant $C = C(G,X)$ such that $d_X( x_0, gx_0 ) \leq C \ell_G( g )$ for all $g \in G$.  Thus for a fixed
$g \in G$, $d_X( w^{-1} x_0, w^{-1} g^{-1} x_0 ) = d_X( g x_0, x_0 ) \leq C \ell_G(g)$ is bounded independent of $w \in G$.
By the second condition above we have
\[ \lim_{n \to \infty} \sup_{w \in G} \| g \mu_n(w) - \mu_n( gw ) \| = 0. \]

Examine $k \in X$ which lies in the support of $\mu_n(g)$.
$\mu_n(g)(k) \neq 0$ if and only if $g \xi_n (g^{-1} x_0 ) (k) = \xi_n( g^{-1} x_0 ) (g^{-1}k) \neq 0$.
This implies $d_X( g^{-1} x_0, g^{-1} k ) = d_X(x_0, k) < S_n$.  By uniform boundedness we have a finite set $F \subset X$
such that each $\mu_n(g)$ is supported in $F$.  The collection of all probability measures supported on $F$ is weak$^*$-compact
so $\mu_n$ extends to a weak$^*$ continuous function $\mu_n : \beta G \to \Prob(X)$ which satisfy
\[ \lim_{n \to \infty} \sup_{x \in \beta G} \| g \mu_n(x) - \mu_n( gx ) \| = 0. \]
\end{proof}

This gives an extension of a theorem of Bell, \cite{B}, and is similar in spirit to a result of Guentner-Tessera-Yu, \cite[Corollary 3.2.4]{GTY}.
\begin{cor}\label{cor:ActionPropA}
Suppose the finitely generated group $G$ acts cocompactly on uniformly discrete, bounded geometry metric space $X$ with property A.  
If there is a point $x_0 \in X$ whose stabilizer in $G$ has property A, then $G$ has property A.
\end{cor}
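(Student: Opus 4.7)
The strategy is to reduce to a transitive action by restricting to a single orbit, then chain together Theorem \ref{thm:PropAComplexRelPropA} and Corollary \ref{cor:RelativePropAGivesPropA}. Concretely, I would let $Y = G \cdot x_0 \subseteq X$, equipped with the restriction of $d_X$. This is a $G$-invariant subspace on which the $G$-action is transitive (so a fortiori cocompact), with a single orbit representative $x_0$ and corresponding stabilizer family $\mH = \{\mathrm{Stab}_G(x_0)\}$.

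The next step is to transfer the metric hypotheses from $X$ to $Y$. Uniform discreteness and bounded geometry of $X$ are clearly inherited by any subspace, and property A for uniformly discrete bounded-geometry metric spaces is coarsely hereditary, so $Y$ has property A as well. Applying Theorem \ref{thm:PropAComplexRelPropA} to the $G$-action on $Y$ (with the choices above for $\mathcal{R}$ and $\mH$), we conclude that $G$ has relative property A with respect to $\{\mathrm{Stab}_G(x_0)\}$.

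Finally, since $\mathrm{Stab}_G(x_0)$ has property A by hypothesis, Corollary \ref{cor:RelativePropAGivesPropA} (applied to the one-element family $\mH = \{\mathrm{Stab}_G(x_0)\}$) yields that $G$ itself has property A.

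The only nontrivial step is the inheritance of property A by the orbit $Y \subseteq X$; everything else is essentially bookkeeping. This should be handled by citing the standard fact that property A passes to subspaces of uniformly discrete bounded-geometry metric spaces, which is immediate from the Reiter-type characterization used throughout this paper: any family of approximating functions for $X$ restricts (after renormalization) to one for $Y$ without altering the relevant uniformity estimates.
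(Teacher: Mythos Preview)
Your proposal is correct and follows essentially the same route as the paper: restrict to the orbit $Y=G\cdot x_0$, verify that $Y$ has property A, apply Theorem \ref{thm:PropAComplexRelPropA} to get relative property A with respect to $\{\mathrm{Stab}_G(x_0)\}$, and then invoke Corollary \ref{cor:RelativePropAGivesPropA}. The only cosmetic difference is in how property A for $Y$ is justified: the paper observes that cocompactness makes $Y$ coarsely equivalent to $X$ (the orbit is coarsely dense), whereas you argue directly that property A passes to subspaces; both are standard and either suffices.
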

\begin{proof}
The orbit $G x_0$ of $x_0$ in $X$ is coarsely equivalent to $X$, so $G x_0$ has property A.
By Theorem \ref{thm:PropAComplexRelPropA}, $G$ has relative property A with respect to the stabilizer.
Corollary \ref{cor:RelativePropAGivesPropA} gives property A for $G$.
\end{proof}

\begin{cor}
Suppose the finitely generated group $G$ is the fundamental group of a developable finite dimensional complex of groups
$\mathcal{Y}$, whose development is a locally finite complex with property A.  Then $G$ has relative property A with
respect to the vertex groups.  In particular, if each vertex group has property A, then $G$ has property A. 
\end{cor}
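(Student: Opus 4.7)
The plan is to realize this corollary as a direct application of Theorem \ref{thm:PropAComplexRelPropA} followed by Corollary \ref{cor:RelativePropAGivesPropA}. Let $\tilde{\mathcal{Y}}$ denote the development of the complex of groups $\mathcal{Y}$, and let $X$ be its $0$-skeleton equipped with the induced combinatorial metric $d_X$ obtained by restricting the edge-path metric on $\tilde{\mathcal{Y}}$. Since $\tilde{\mathcal{Y}}$ is locally finite, $X$ is uniformly discrete with bounded geometry, and since $X \hookrightarrow \tilde{\mathcal{Y}}$ is a coarse equivalence (the development is finite dimensional and locally finite), property A transfers from $\tilde{\mathcal{Y}}$ to $X$.

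Next I would identify the $G$-action on $X$ with the vertex action coming from the developability of $\mathcal{Y}$. By the standard theory of developable complexes of groups (Bridson--Haefliger), $G$ acts on $\tilde{\mathcal{Y}}$ by simplicial automorphisms with quotient the underlying scwol of $\mathcal{Y}$, and the stabilizer of a lift of a vertex $\sigma$ is conjugate to the vertex group $G_\sigma$. Since $\mathcal{Y}$ is finite dimensional and $G$ is finitely generated, the underlying complex has finitely many vertices, so the action of $G$ on $X$ is cofinite. Picking a set of representatives $\mathcal{R}$ of $G$-orbits in $X$ then yields $\mH$ equal to the family of vertex groups of $\mathcal{Y}$ (up to conjugacy).

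With these identifications Theorem \ref{thm:PropAComplexRelPropA} applies directly to the cocompact action of $G$ on the property A space $(X, d_X)$, giving relative property A of $G$ with respect to $\mH$, which is the first assertion of the corollary. The second assertion follows immediately from Corollary \ref{cor:RelativePropAGivesPropA}: if each $H \in \mH$ has property A, then so does $G$.

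The main obstacle I anticipate is a bookkeeping one rather than a conceptual one, namely verifying carefully that the $G$-action on $X$ really is cofinite and that the stabilizers are precisely the vertex groups. These are standard facts for developable complexes of groups but require unwinding the definition and invoking the description of $G = \pi_1(\mathcal{Y})$ via the fundamental group of the scwol; the finite generation of $G$ together with finite dimensionality of $\mathcal{Y}$ is what ensures that finitely many orbit representatives suffice. Once this is pinned down, the conclusion is essentially a two-step deduction from results already proved in the paper.
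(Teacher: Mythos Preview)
Your approach is exactly what the paper intends: the corollary is stated without proof and is meant to follow directly from Theorem~\ref{thm:PropAComplexRelPropA} applied to the $0$-skeleton of the development, together with Corollary~\ref{cor:RelativePropAGivesPropA}. One small point: your justification that ``$\mathcal{Y}$ is finite dimensional and $G$ is finitely generated'' forces the underlying complex to have finitely many vertices is not correct as stated---finite dimensionality by itself does not bound the number of cells, and finite generation of $G$ does not obviously help either. What is really being used (and is implicit in the paper as well) is that the underlying complex of $\mathcal{Y}$ is compact, so that the quotient of the development by $G$ is finite; this is the standard setting for complexes of groups and is what makes the action cocompact and $\mH$ a finite family. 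Once that is noted, your deduction is complete and matches the paper.
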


This also gives an extension of a Theorem of \cite{BCGNW}, where it is shown that a finite dimensional
CAT($0$) cube complex has property A.  In particular we have the following.

\begin{cor}
Suppose the finitely generated group $G$ acts cocompactly on a finite dimensional CAT($0$) cube complex.
If each vertex stabilizer has property A, then $G$ has property A.
\end{cor}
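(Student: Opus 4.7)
The plan is to combine the already-established machinery in a straightforward way: use the cited fact that a finite dimensional CAT($0$) cube complex has property A to apply Theorem \ref{thm:PropAComplexRelPropA}, and then appeal to Corollary \ref{cor:RelativePropAGivesPropA} to pass from relative property A to property A.

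First I would fix a finite dimensional CAT($0$) cube complex $Y$ on which $G$ acts cocompactly, and replace $Y$ by its vertex set $X = Y^{(0)}$ equipped with the restricted edge-path (combinatorial) metric $d_X$ coming from the $1$-skeleton of $Y$. With this metric $X$ is uniformly discrete (any two distinct vertices have distance at least $1$), and cocompactness of the $G$-action together with finite dimensionality gives bounded geometry on $X$, after noting that a cocompact isometric group action on a locally finite complex forces balls of fixed radius to have uniformly bounded cardinality. The set of representatives $\mathcal{R}$ of $G$-orbits on $X$ is finite by cocompactness, and by hypothesis each stabilizer $H \in \mH$ of a representative has property A.

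Next I would invoke the result of Campbell--Niblo and Brodzki--Campbell--Guentner--Niblo--Wright (cited as \cite{C,BCGNW}) to conclude that $(X, d_X)$ has property A as a metric space, since property A for $Y$ restricts to its $0$-skeleton (the vertex set is coarsely equivalent to the whole cube complex). With $(X, d_X)$ now verified to satisfy the hypotheses of Theorem \ref{thm:PropAComplexRelPropA}, that theorem yields relative property A for $G$ with respect to $\mH$. Finally, Corollary \ref{cor:RelativePropAGivesPropA} applied to this relative property A, together with the hypothesis that each $H \in \mH$ has property A, gives property A for $G$.

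The main technical point, and the one place where a little care is needed, is the verification that the vertex set of the cocompactly acted-upon cube complex really does fit the hypotheses of Theorem \ref{thm:PropAComplexRelPropA}, i.e.\ uniform discreteness, bounded geometry, and property A of the metric space. Uniform discreteness is automatic from the combinatorial metric; bounded geometry follows from cocompactness plus local finiteness (which one should either assume or extract from finite dimensionality together with cocompactness of the action); and property A transfers from the cube complex to its vertex set by coarse equivalence. Once these points are settled, the remainder of the proof is a purely formal citation of the preceding results.
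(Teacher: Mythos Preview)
Your proposal is correct and follows exactly the route the paper intends: the corollary is stated without explicit proof, immediately after the remark that \cite{BCGNW} establishes property A for finite dimensional CAT($0$) cube complexes, so the argument is precisely to feed this into Theorem \ref{thm:PropAComplexRelPropA} and then Corollary \ref{cor:RelativePropAGivesPropA}. Your care about passing to the vertex set and verifying uniform discreteness and bounded geometry is appropriate; note that local finiteness should indeed be taken as part of the hypothesis (as in the introduction's statement of this corollary), since finite dimensionality alone does not guarantee it.
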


By the work of Campbell, \cite{C}, affine buildings have property A.  Thus we have the following which can be considered a
generalization of Kasparov-Skandalis, \cite{KS}.
\begin{cor}
Suppose the finitely generated group $G$ acts cocompactly on an affine building.  If the vertex stabilizers
have property A, then $G$ has property A.
\end{cor}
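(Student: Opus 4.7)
The plan is to deduce this corollary immediately from Corollary~\ref{cor:ActionPropA} applied to the action of $G$ on the vertex set of the building. The argument has essentially three steps, and none of them is a real obstacle once the prior results are in hand.

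First, I would let $X$ denote the $0$-skeleton of the affine building $B$, equipped with the metric induced from $B$ (or equivalently the combinatorial edge-path metric on the $1$-skeleton). Since $G$ acts cocompactly on $B$ and $B$ has only finitely many $G$-orbits of cells, $B$ is in particular locally finite, so $X$ is a uniformly discrete metric space of bounded geometry. Moreover, the inclusion $X \hookrightarrow B$ is a quasi-isometry, because every cell of $B$ has uniformly bounded diameter and is within bounded distance of a vertex. By Campbell's theorem \cite{C}, $B$ has property~A, and since property~A is a coarse invariant of uniformly discrete bounded-geometry spaces, $X$ inherits property~A from $B$.

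Second, I would note that $G$ acts on $B$ by simplicial automorphisms, hence permutes the set of vertices, so the restricted action of $G$ on $X$ is by isometries and is still cocompact. For any fixed vertex $v_0 \in X$, the stabilizer $G_{v_0}$ is, by hypothesis, a group with property~A.

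Finally, I would invoke Corollary~\ref{cor:ActionPropA}, whose hypotheses are now satisfied: $G$ is a finitely generated group acting cocompactly on the uniformly discrete bounded-geometry metric space $X$, which has property~A, and some point stabilizer (namely $G_{v_0}$) has property~A. The corollary immediately yields that $G$ has property~A, completing the argument. The only verification that requires care is the quasi-isometry between $B$ and its vertex set, which follows at once from local finiteness together with the uniform boundedness of cell diameters in an affine building.
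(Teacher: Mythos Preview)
Your argument is correct and is exactly the route the paper takes: the corollary is stated immediately after invoking Campbell's theorem \cite{C} that affine buildings have property~A, and is meant to follow directly from Corollary~\ref{cor:ActionPropA} applied to the vertex set. You have simply made explicit the (routine) passage to the $0$-skeleton and the coarse invariance of property~A, which the paper leaves implicit.
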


\section{Relative Amenability}
As Yu's property A is a generalization of amenability, the work of the proceeding sections specializes
to the notion of  relative amenability.  It is well-known that a countable discrete group $G$ is amenable
if and only if there exists a sequence of probability measures $\mu_n \in \Prob(G)$ such that for all
$g \in G$, $\| g \mu_n - \mu_n \|_{\ell^1} \to 0$.  That is, if $G$ acts amenably on a point.  This motivates
the definition of relative amenability from that of Definition \ref{def:BNNWRelAmenabAction}.

\begin{defn}\label{defn:RelAmenability}
A countable group $G$ is relatively amenable with respect to $\mH$ if $G$ acts amenable relative to $\mH$ on
a point.
\end{defn}

When $X$ is reduced to a point, much of the earlier notation simplifies.  Of particular interest are
$W_{0}(K, X) \isom \ell^1(K)$ and $N_{0}(K,X) \isom \ell^1_0(K)$, where $\ell^1_0(K)$ denotes the kernel
of the augmentation $\epsilon: \ell^1(K) \to \C$ given by $\epsilon(f) = \sum_{k \in K} f(k)$.

We remark that this definition, due to the above sections, is equivalent to the existence of a $\mu \in (\ell^1(K))^{**}$
with $g\mu = \mu$ and $\mu( \pi ) = 1$, where $\pi : \ell^1(K) \to \C$ is the augmentation map.  That is, rather
than the existence of a $G$-invariant mean on $\ell^\infty(G)$, relative amenability is the existence of 
a $G$-invariant mean on $\ell^\infty(K)$.

The construction of the relative Johnson class $[J_r] \in H^1_b\left(G; \left(\ell^1_0(K)\right)^{**} \right)$
is as before.  We have the following immediately from Theorem \ref{thm:RelJohnsonClass}.
\begin{prop}
The class $[J_r] \in H^1_b\left(G; \left(\ell^1_0(K)\right)^{**} \right)$ is trivial if and only if $G$ is relatively
amenable with respect to $\mH$.
\end{prop}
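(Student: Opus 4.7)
The plan is to obtain this proposition as a direct specialization of Theorem \ref{thm:RelJohnsonClass} to the case in which the compact Hausdorff $G$-space $X$ is taken to be a single point. First, I would unpack what the general construction gives when $X = \{*\}$. In this case $C(X) \cong \C$, the $V$-norm on functions $K \to C(X)$ reduces to the $\ell^1$-norm on $K$, and thus $W_{00}(K,X)$ becomes the space of finitely supported functions $K \to \C$ whose values sum to a constant multiple of $1_X$. Taking the $V$-norm closure one recovers $W_{0}(K,X) \cong \ell^1(K)$, and under this identification the augmentation $\pi$ becomes the standard augmentation $\epsilon \colon \ell^1(K) \to \C$ summing coefficients. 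Consequently $N_{0}(K,X) = \ker \pi$ is identified with $\ell^1_0(K)$, so that
\[
H^1_b\bigl(G; N_0(K,X)^{**}\bigr) \;\isom\; H^1_b\bigl(G; (\ell^1_0(K))^{**}\bigr),
\]
and the relative Johnson class $[J_r]$ constructed in the text coincides, under these identifications, with the class $[J_r] \in H^1_b(G;(\ell^1_0(K))^{**})$ in the statement.

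Next, I would translate the notion of a relatively amenable action on $X = \{*\}$ into the definition of relative amenability for $G$. Under the identifications above, a sequence $f^n \in W_{00}(K,X)$ with $f^n_k \geq 0$, $\pi(f^n) = 1$, and $\|f^n - gf^n\|_V \to 0$ for each $g \in G$ is exactly a sequence $f^n \in \ell^1(K)$ of finitely supported probability measures on $K$ with $\|f^n - gf^n\|_{\ell^1} \to 0$ for each $g \in G$. By Definition \ref{defn:RelAmenability}, the existence of such a sequence is precisely the assertion that $G$ is relatively amenable with respect to $\mH$ (acting on the one-point space).

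With these two identifications in place, the proposition follows at once: Theorem \ref{thm:RelJohnsonClass} asserts that $[J_r] = 0$ in $H^1_b(G; N_0(K,X)^{**})$ if and only if the $G$-action on $X$ is relatively amenable with respect to $\mH$, and applying this to $X = \{*\}$ gives the claimed equivalence. There is no real obstacle beyond verifying that the passage $X \mapsto \{*\}$ is a legitimate case of the earlier theorem and that all the identifications are $G$-equivariant; both checks are immediate from the constructions in Section 2 and the one-point simplifications noted at the top of Section 6 (namely $W_0(K,X) \cong \ell^1(K)$ and $N_0(K,X) \cong \ell^1_0(K)$).
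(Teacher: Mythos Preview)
Your proposal is correct and follows exactly the paper's approach: the paper states this proposition holds ``immediately from Theorem \ref{thm:RelJohnsonClass}'', and your argument is precisely the verification that specializing $X$ to a point collapses $W_0(K,X)$ to $\ell^1(K)$, $N_0(K,X)$ to $\ell^1_0(K)$, and the relatively amenable action to relative amenability in the sense of Definition \ref{defn:RelAmenability}.
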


We note that when $X$ is a point, the notion of an $\ell^1$-geometric $G$-$C(X)$ module reduces
to just that of a $G$-Banach space.  This is due to the lack of a pair of nontrivial disjointly
supported elements of $C(X) \isom \C$.  
We obtain the following cohomological characterization of relative amenability.
\begin{thm}\label{thm:RelAmenabCohomChar}
Let $G$ be a countable group and $\mH$ a finite family of subgroups.  The following are equivalent.
\begin{enumerate}
	\item $G$ is relatively amenable with respect to $\mH$.
	\item For every $G$-Banach space $E$, the map $H^0_b( \mH; E^* ) \to H^1_b( G, \mH; E^* )$ is surjective.
\end{enumerate}
\end{thm}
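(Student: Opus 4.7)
The plan is to derive Theorem \ref{thm:RelAmenabCohomChar} as a direct specialization of Corollary \ref{cor:cohomchar}, by taking the compact Hausdorff space $X$ to be a single point. I would carry this out in two short steps, followed by a brief dictionary check.

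First, I would recall Definition \ref{defn:RelAmenability}: $G$ is relatively amenable with respect to $\mH$ exactly when $G$ acts amenably (in the relative sense of Definition \ref{def:BNNWRelAmenabAction}) on a point. By the proposition preceding Definition \ref{defn:RelAmenability}, admitting a relatively amenable action on a compact Hausdorff space is equivalent to relative property A, so condition (1) of the theorem is equivalent to condition (1) of Corollary \ref{cor:cohomchar} applied to the trivial $G$-space $X = \{*\}$.

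Second, I would translate the $\ell^1$-geometric hypothesis in Corollary \ref{cor:cohomchar} into the Banach-space hypothesis of the present theorem. When $X$ is a point, $C(X) \isom \C$, so a contractive unital representation of $C(X)$ on a Banach space $E$ is nothing more than the scalar action; hence every $G$-Banach space $E$ is canonically a $G$-$C(X)$ module. Moreover, the only pairs $f_1, f_2 \in \C$ with disjoint support in the single point $X$ are pairs with at least one of them zero, so the only disjointly supported pairs in $E$ are pairs containing zero. The $\ell^1$-geometric identity $\|\phi^1 + \phi^2\|_{\E} = \|\phi^1\|_{\E} + \|\phi^2\|_{\E}$ then holds trivially, and in fact every $G$-Banach space qualifies as an $\ell^1$-geometric $G$-$C(X)$ module over a point, with the correspondence being a bijection.

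Combining these two observations, condition (2) of Corollary \ref{cor:cohomchar} in the case $X = \{*\}$ reads: for every $G$-Banach space $E$, the map $H^0_b(\mH; E^*) \to H^1_b(G, \mH; E^*)$ is surjective, which is exactly condition (2) of the present theorem. The equivalence (1)$\Leftrightarrow$(2) then follows immediately from Corollary \ref{cor:cohomchar}. I do not anticipate any genuine obstacle here; the only step that requires any care is the bijection between $G$-Banach spaces and $\ell^1$-geometric $G$-$C(\text{pt})$ modules, and this is essentially a tautology once one notices that disjoint support forces triviality over a one-point space.
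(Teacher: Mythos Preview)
Your overall strategy---specialize the Section 5 machinery to the one-point space $X=\{*\}$---is exactly what the paper does, and your Step 2 is correct and matches the paragraph preceding the theorem almost verbatim.

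The gap is in Step 1. Condition (1) of Corollary~\ref{cor:cohomchar} is ``$G$ has relative property A with respect to $\mH$,'' a statement about $G$ and $\mH$ alone with no $X$ in it; there is nothing to ``apply to $X=\{*\}$.'' Relative property A asks only that \emph{some} compact Hausdorff $G$-space carry a relatively amenable action, whereas relative amenability demands this for $X=\{*\}$ specifically. These are not equivalent: just after Corollary~\ref{cor:RelAmenabAndAmenabGivesAmenab} the paper observes that $\mathbb{F}_2$ has relative property A with respect to $\{\langle a\rangle,\langle b\rangle\}$ but is not relatively amenable with respect to them. Concretely, your route through Corollary~\ref{cor:cohomchar} does yield $(1)\Rightarrow(2)$ (relative amenability $\Rightarrow$ relative property A $\Rightarrow$ surjectivity for all $X$ and all $\E$ $\Rightarrow$ surjectivity for $X=\{*\}$), but it cannot yield $(2)\Rightarrow(1)$: from surjectivity only over $X=\{*\}$ you are not entitled to invoke the corollary, and even if you were you would recover only relative property A, not relative amenability.

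The fix is to specialize not Corollary~\ref{cor:cohomchar} itself but its two inputs, each of which \emph{is} formulated for a fixed $X$. For $(1)\Rightarrow(2)$, apply the theorem immediately preceding Corollary~\ref{cor:cohomchar} with $X=\{*\}$. For $(2)\Rightarrow(1)$, apply the lemma following Proposition~\ref{prop:RelJohnsonInRelCoh} (equivalently Theorem~\ref{thm:RelJohnsonClass}) with $X=\{*\}$: taking the $G$-Banach space $E=N_0(K,\{*\})^*=\ell^1_0(K)^*$, condition (2) gives surjectivity for $E^*=(\ell^1_0(K))^{**}$, hence $[J_r]=0$, hence relative amenability. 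This is what the paper's one-line ``We obtain the following'' is abbreviating.
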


This has the following corollary.  It follows as above, noting Johnson's characterization of
amenability.
\begin{cor}\label{cor:RelAmenabAndAmenabGivesAmenab}
Suppose the countable group $G$ is relatively amenable with respect to the family of subgroups $\mH$.
If each $H \in \mH$ is amenable, then $G$ is amenable.
\end{cor}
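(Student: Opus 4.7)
The plan is to mimic the proof of Corollary \ref{cor:RelativePropAGivesPropA}, but replacing Theorem \ref{thm:BNNW} with Johnson's classical cohomological characterization of amenability: a countable discrete group $G$ is amenable if and only if $H^1_b(G;E^*)=0$ for every $G$-Banach space $E$ (equivalently, every dual Banach $G$-module). This is what the hint in the excerpt (``following Johnson's characterization of amenability'') is pointing to.

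First I would fix an arbitrary $G$-Banach space $E$ and write down the Bieri--Eckmann style long-exact sequence in relative bounded cohomology from Section 3:
\[ \cdots \to H^0_b(\mH;E^*) \stackrel{\delta}{\to} H^1_b(G,\mH;E^*) \stackrel{d}{\to} H^1_b(G;E^*) \stackrel{r}{\to} H^1_b(\mH;E^*) \to \cdots . \]
Since $G$ is relatively amenable with respect to $\mH$, Theorem \ref{thm:RelAmenabCohomChar} tells us that $\delta$ is surjective. Exactness then forces $d=0$, so $r$ is injective.

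Next I would use the hypothesis that each $H\in\mH$ is amenable. By Johnson's theorem applied to each $H\in\mH$ (viewing $E^*$ as an $H$-Banach module by restriction of the $G$-action), $H^1_b(H;E^*)=0$ for every $H\in\mH$. Since $H^1_b(\mH;E^*)=\prod_{H\in\mH} H^1_b(H;E^*)$, we obtain $H^1_b(\mH;E^*)=0$. Combined with injectivity of $r$, this gives $H^1_b(G;E^*)=0$. As $E$ was an arbitrary $G$-Banach space, Johnson's characterization in the reverse direction yields that $G$ is amenable.

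I do not anticipate a serious obstacle: the argument is essentially a transcription of the property A version. The only mild point to verify is that Theorem \ref{thm:RelAmenabCohomChar}, which is stated for $G$-Banach spaces (the specialization of $\ell^1$-geometric $G$-$C(X)$-modules when $X$ is a point), is the correct input to combine with Johnson's theorem which also quantifies over all $G$-Banach spaces; the two ranges of coefficients match exactly, so the diagram chase closes cleanly.
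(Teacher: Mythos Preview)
Your proposal is correct and is exactly the argument the paper intends: it states only that the corollary ``follows as above, noting Johnson's characterization of amenability,'' where ``above'' refers to Corollary \ref{cor:RelativePropAGivesPropA}. You have faithfully spelled out that long-exact-sequence chase with Theorem \ref{thm:RelAmenabCohomChar} replacing Corollary \ref{cor:cohomchar} and Johnson's theorem replacing Theorem \ref{thm:BNNW}.
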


As an example, let $\mathbb{F}_2$ be the free group on two generators, $a$ and $b$, and let $A = <a>$ and $B = <b>$.
Since $A$ and $B$ are both amenable, if $\mathbb{F}_2$ were relatively amenable with respect to $\mH = \{ A, B \}$,
then $\mathbb{F}_2$ would be forced by Corollary \ref{cor:RelAmenabAndAmenabGivesAmenab} to be amenable.
As it is not, $\mathbb{F}_2$ is not relatively amenable with respect to $\mH$, even though it does have
relative property A with respect to $\mH$.

The following is stronger than the analogue for relative property A.
\begin{prop}
Suppose $H \normal G$.  The quotient $G/H$ is amenable if and only if $G$ is relatively amenable with respect to $H$.
\end{prop}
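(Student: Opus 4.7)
The plan is to reduce the proposition to the observation that, for a normal subgroup, the $G$-action on $G/H$ factors through the quotient group $G/H$, so invariant means under the two actions coincide.

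First I would recall the characterization of relative amenability noted in the paragraph following Definition \ref{defn:RelAmenability}: $G$ is relatively amenable with respect to $\{H\}$ if and only if there exists a $G$-invariant mean $\mu$ on $\ell^\infty(K)$, with $K = G/H$; equivalently, an element $\mu \in (\ell^1(K))^{**}$ satisfying $g\mu = \mu$ and $\mu(\pi)=1$. On the other hand, $G/H$ is amenable if and only if there is a $(G/H)$-invariant mean on $\ell^\infty(G/H)$ (Day--Johnson).

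Next I would observe that because $H \normal G$, the left $G$-action on the coset space $G/H$ is by the formula $g \cdot (aH) = (gH)(aH)$ in the group $G/H$, and hence factors through the canonical surjection $q : G \twoheadrightarrow G/H$ to the left-regular action of $G/H$ on itself. Consequently, a mean $\mu$ on $\ell^\infty(G/H)$ satisfies $g\mu = \mu$ for every $g \in G$ if and only if it satisfies $(gH)\mu = \mu$ for every coset $gH \in G/H$. Thus the set of $G$-invariant means equals the set of $(G/H)$-invariant means.

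Combining these two points yields both implications at once: an invariant mean witnessing amenability of $G/H$ also witnesses relative amenability of $G$ with respect to $H$, and conversely. There is essentially no obstacle; the only subtlety is to correctly identify, via the normality of $H$, the $G$-action on $\ell^\infty(G/H)$ with the action of the quotient group, and to invoke the mean-theoretic reformulation of relative amenability. This is precisely why the analogous statement for relative property A (cf.\ Proposition \ref{prop:Extensions} and the subsequent remark) is one-sided while the amenable version is a genuine equivalence.
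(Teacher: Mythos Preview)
Your proposal is correct and uses the same key observation as the paper: since $H \normal G$, the left $G$-action on $G/H$ factors through the quotient, so $G$-invariance and $(G/H)$-invariance coincide. The only cosmetic difference is that the paper phrases both directions via Reiter-type sequences $\mu_n \in \Prob(G/H)$ (the form of Definition~\ref{defn:RelAmenability}) rather than via invariant means; the translation between the two is exactly the equivalence recalled after Definition~\ref{defn:RelAmenability}, so the arguments are essentially identical.
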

\begin{proof}
If $G/H$ is amenable, there is a sequence of probability measures $\mu_n \in \Prob(G/H)$ with
$\lim_{n \to \infty} \| qH \mu_n - \mu_n \|_{\ell^1} = 0$ for all $qH \in G/H$.  As the $G$ action
on $G/H$ has the property that $g qH = gH qH$ we have that 
$\| g \mu_n - \mu_n \|_{\ell^1} = \| gH \mu_n - \mu_n \|_{\ell^1}$.  In particular, for each $g \in G$
$\lim_{n\to\infty}\| g \mu_n - \mu_n \|_{\ell^1} = \lim_{n\to\infty} \| gH \mu_n - \mu_n \|_{\ell^1} = 0$.

For the converse, suppose $G$ is relatively amenable with respect to $H$.  Then there is a sequence
of probability measures $\xi_n \in \Prob(G/H)$, such that for all $g \in G$,
$\| g \xi_n - \xi_n \|_{\ell^1} \to 0$.  As $g qH = gH qH$ for all $qH \in G/H$, thus
for all $qH \in G/H$, $\| qH \xi_n - \xi_n \|_{\ell^1} \to 0$.  
\end{proof}

For instance, $\mathbb{F}_2 \oplus \Z^n$ is relatively amenable with respect to the $\mathbb{F}_2$,
but not with respect to the $\Z^n$.

\begin{prop}
Suppose $\mH$ is a finite family of finite index subgroups of the countable group $G$.
Then $G$ is relatively amenable with respect to $\mH$.
\end{prop}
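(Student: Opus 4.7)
The plan is to imitate the argument used for the analogous proposition about relative property A earlier in the section, exploiting the fact that finite index forces the coset set $G/\mH$ to be finite, so a single $G$-invariant probability measure already does the job.

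First I would set $K = G/\mH = \coprod_{H \in \mH} G/H$. Since $\mH$ is a finite family and each $H \in \mH$ has finite index in $G$, each $G/H$ is finite, and $K$ itself is therefore a finite set. The $G$-action on $K$ permutes the (finitely many) points in each orbit $G/H$, so $G$ acts on $K$ by permutations of a finite set.

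Next, let $p \in \Prob(K)$ denote the uniform probability measure on $K$, i.e.\ $p(k) = 1/|K|$ for every $k \in K$. Because $G$ acts by permutations on the finite set $K$, we have $g \cdot p = p$ for every $g \in G$. Taking the constant sequence $\mu_n = p$ in $\Prob(K)$, viewed as elements of $\ell^1(K) \cong W_{0}(K,\mathrm{pt})$, we see that each $\mu_n$ is finitely supported, satisfies $\pi(\mu_n)=1$, and has $\|\mu_n - g\mu_n\|_{\ell^1} = 0$ for all $g \in G$. This verifies the conditions of Definition \ref{def:BNNWRelAmenabAction} applied to a point (equivalently Definition \ref{defn:RelAmenability}), so $G$ is relatively amenable with respect to $\mH$.

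There is no real obstacle here: once one observes that the orbit set $K$ is finite, the uniform measure is automatically $G$-invariant, and the relative Reiter condition is satisfied trivially by a constant sequence. The only thing to be careful about is to remember that the measures must be probability measures on $K = G/\mH$ (not on $G$), which is precisely what makes the finite-index hypothesis usable.
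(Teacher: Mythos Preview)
Your proof is correct and follows essentially the same approach as the paper: set $K = G/\mH$, observe that $K$ is finite, take $p$ to be the uniform probability measure on $K$, and use the constant sequence $\mu_n = p$, which is $G$-invariant since $G$ acts by permutations on the finite set $K$. You simply supply a bit more detail than the paper's terse version.
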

\begin{proof}
Let $K = G/\mH$, and let $p$ be the uniform probability measure on the finite set $K$.
Let $\mu_n = p$ be the constant sequence.  As $p$ is invariant under the $G$ action
on $K$, $g \mu_n = \mu_n$.
\end{proof}

We recall the notion of a metric space being amenable, as in \cite{BW}.
For $(X, d_X)$ a metric space and $U \subset X$ let
$\partial_r U = \{ x \in X \, | \, d_X(x,U) < r \,\,\rm{ and }\,\, d_X(x, X \setminus U )< r \}$.
\begin{defn}
A uniformly discrete metric space with bounded geometry $(X, d_X)$ is amenable if for any 
$r, \delta > 0$ there is a finite $U \subset X$ so that 
\[ \frac{|\partial_r U|}{| U |} < \delta. \]
\end{defn}

Recall the definition of uniformly finite homology of Block-Weinberger.  Denote by
$C^{uf}_k(X)$ the formal sums $\sum_{z \in X^{k+1} } a_z [z]$, with $a_z \in \C$ satisfying
the following conditions.
\begin{enumerate}
	\item There is a $K > 0$ such that for all $z \in X^{k+1}$, $a_z < K$.
	\item There is an $R > 0$ such that if $z = (x_0, \ldots, x_k)$ with $d_X( x_i, x_j ) \geq R$
			then $a_z = 0$.
\end{enumerate}
Endowed with the boundary map, $\partial$,  induced by $(x_0, \ldots, x_k) \mapsto \sum_{j = 0}^{k} (-1)^j (x_0, \ldots, \hat{x_j}, \ldots, x_k)$,
we obtain a chain complex $C^{uf}_*(X)$.  Denote the homology of this complex by $H^{uf}_*(X)$.
A main result of \cite{BW} is the following.
\begin{thm}[Block-Weinberger]
Let $X$ be a uniformly discrete bounded geometry metric space.  The following are equivalent.
\begin{enumerate}
	\item $X$ is non-amenable.
	\item $H^{uf}_0(X) = 0$.
	\item If $c = \sum_{x \in X} a_x [x] \in C^{uf}_0(X)$ with each $a_x > 0$, then $[c] \neq 0$ in $H^{uf}_0(X)$.
\end{enumerate}
\end{thm}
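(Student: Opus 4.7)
The statement is the foundational Block--Weinberger theorem characterizing amenability of a uniformly discrete bounded geometry metric space in terms of the vanishing of its zeroth uniformly finite homology. My approach is to prove $(1)\Leftrightarrow(2)$ via a Hahn--Banach duality argument applied to the boundary map $\partial:C^{uf}_1(X)\to C^{uf}_0(X)$, and then to see how $(3)$ fits in.

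For $(2)\Rightarrow(1)$, $H^{uf}_0(X)=0$ yields in particular a chain $b\in C^{uf}_1(X)$ with $\partial b = \sum_{x\in X}[x]$; its coefficients are uniformly bounded by some $K$ and supported on pairs at distance $<R$. Assume for contradiction that $X$ is amenable, and pick a F{\o}lner set $U\subset X$ with $|\partial_R U|/|U|$ much smaller than $1/K$ times the bounded geometry constant. Pair both sides of $\partial b = \sum_x[x]$ with the characteristic function $\chi_U$: the left-hand side gives $|U|$, while on the right the only edges surviving the pairing straddle $\partial_R U$, so by bounded geometry their total $|b|$-weight is at most a constant multiple of $K\,|\partial_R U|$, contradicting the F{\o}lner inequality. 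For $(1)\Rightarrow(2)$, suppose some $c\in C^{uf}_0(X)$ is not in $\operatorname{im}\partial$; restrict to a fixed Banach-space slice of $C^{uf}_0(X)$ (coefficients bounded by some $M$, propagation $\leq R$) and apply Hahn--Banach to produce a continuous linear functional $\phi$ vanishing on $\partial C^{uf}_1(X)$ but not on $c$. Such a $\phi$ corresponds to a bounded function on $X$ that is ``almost constant at scale $R$''; averaging over concentric balls and normalizing yields a sequence of probability measures satisfying Reiter's condition, contradicting non-amenability.

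Condition $(3)$ requires comment: as literally written it asserts $[c]\neq 0$ for every strictly positive $c$, which is incompatible with $(2)$ whenever $X$ is nonempty, since $(2)$ forces every class---in particular $[\sum_x[x]]$---to vanish. With this discrepancy, the three conditions as stated cannot all be equivalent on any nonempty space, so there is no proof of the statement exactly as worded outside of the degenerate empty case. Under the most natural correction, namely $[c]=0$ for every strictly positive $c$, the equivalence with $(2)$ is immediate in one direction, and in the other follows by writing any $c\in C^{uf}_0(X)$ as a difference of two strictly positive chains after absorbing a sufficiently large scalar multiple of $\sum_x[x]$ into each summand.

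\textbf{Main obstacle.} The technical heart is the step in $(1)\Rightarrow(2)$ extracting an honest invariant mean from a Hahn--Banach separating functional: $C^{uf}_*(X)$ is a direct limit of Banach spaces indexed by propagation, not a single Banach space, so the duality has to be run at each fixed scale $R$ and the resulting almost-invariant functionals diagonalized (using bounded geometry) into a genuine Reiter sequence on $X$. The second genuine obstacle is the literal wording of $(3)$, which cannot be reconciled with $(2)$ as stated; any actual proof must either emend $(3)$ to ``$[c]=0$'' or confine the theorem to the empty case, and I would flag this in the write-up rather than silently alter the statement.
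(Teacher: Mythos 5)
First, note that the paper offers no proof of this statement: it is quoted from Block--Weinberger \cite{BW} (their Theorem 3.1), so there is no internal argument to compare yours against; I can only measure your proposal against the source and against the paper's own use of the result. On that score, your central observation is correct and important: as printed, item (3) is the \emph{unnegated} Block--Weinberger condition. In \cite{BW} the equivalence is between ``$X$ is amenable,'' ``$H^{uf}_0(X)\neq 0$,'' and ``every $c=\sum_x a_x[x]$ with $a_x\geq C>0$ has $[c]\neq 0$''; the authors here negated the first two items but not the third, and for nonempty $X$ the chain $\sum_x[x]$ shows that (2) and (3) cannot both hold. Your proposed repair (replace the conclusion by $[c]=0$) is the right one, and your reduction of the repaired (3) to (2), by absorbing a large multiple of $\sum_x[x]$ into an arbitrary chain to write it as a difference of positive chains, is fine. (Note that in the original unnegated form the uniform lower bound $a_x\geq C>0$ is essential---on $\Z$ a summable strictly positive chain is a boundary---whereas with your corrected conclusion mere positivity suffices.) Your $(2)\Rightarrow(1)$ argument, pairing $\partial b=\sum_x[x]$ against $\chi_U$ for a F{\o}lner set $U$ and bounding the surviving edge mass by a constant times $K\,|\partial_R U|$ via bounded geometry, is the standard proof and is correct.

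The genuine gap is in $(1)\Rightarrow(2)$. A Hahn--Banach functional $\phi$ on $C^{uf}_0(X)\isom\ell^\infty(X)$ is an element of $(\ell^\infty(X))^*\isom(\ell^1(X))^{**}$, i.e.\ a finitely additive measure on $X$; it does \emph{not} ``correspond to a bounded function on $X$'' (that would be a functional on $\ell^1(X)$), and ``averaging over concentric balls'' is not a mechanism that produces a Reiter sequence from it. Two things are missing. First, positivity: the functional separating a single non-bounding $c$ from $\partial C^{uf}_1(X)$ need not be positive and so does not directly define a mean; Block--Weinberger handle this by separating the open convex cone of uniformly positive chains from the subspace of boundaries---exactly the argument this paper reproduces in its proof of Theorem \ref{thm:RelAmenActionOnSpace}---which is why the homological input one actually uses is the (corrected) condition (3) rather than merely ``some class is nonzero.'' Second, extracting F{\o}lner sets from such a mean requires the Namioka--Day mechanism: approximate the mean weak-$*$ by elements of $\ell^1(X)$, upgrade to norm-approximate invariance under all partial translations of propagation at most $R$ via Mazur's lemma, and then take level sets of the resulting $\ell^1$ functions. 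Alternatively---and this is how ``non-amenable $\Rightarrow H^{uf}_0(X)=0$'' is usually proved---one avoids duality altogether: non-amenability yields, via Hall's marriage theorem, a map $f:X\to X$ at bounded distance from the identity with $|f^{-1}(x)|\geq 2$ for all $x$, from which any uniformly bounded $0$-chain is exhibited explicitly as a boundary by summing along the backward orbits of $f$. As written, your sketch of this direction does not yet constitute a proof.
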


\begin{thm}\label{thm:RelAmenActionOnSpace}
Suppose the countable discrete group $G$ acts cocompactly  on a uniformly discrete metric
space with bounded geometry $(X, d_X)$.  Let $\mathcal{R}$ be a family of representatives of
orbits of $G$ in $X$, and let $\mH$ be the family of subgroups which each stabilize an element
of $\mathcal{R}$.  If $X$ is an amenable metric space, then $G$ is amenable with respect to $\mH$.
\end{thm}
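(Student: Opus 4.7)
The plan is to exhibit a sequence $\mu_n \in \Prob(K)$ with $\|g\mu_n - \mu_n\|_{\ell^1} \to 0$ for each $g \in G$; by Definition~\ref{defn:RelAmenability} and the discussion following it, this is equivalent to $G$ being relatively amenable with respect to $\mH$.

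First I would identify $K = G/\mH$ with $X$ as a $G$-set. Each $G$-orbit $Gr$ for $r \in \mathcal{R}$ is $G$-equivariantly bijective with $G/H_r$ via $gr \leftrightarrow gH_r$, so $X \cong G/\mH = K$ as $G$-sets. Under this identification $\Prob(K) = \Prob(X)$, and the Reiter-type inequality $\|g\mu_n - \mu_n\|_{\ell^1} \to 0$ for $\mu_n = \chi_{U_n}/|U_n|$ becomes $|gU_n \triangle U_n|/|U_n| \to 0$ for a judiciously chosen sequence of finite $U_n \subset X$.

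Second, I would invoke Block--Weinberger amenability of $X$ to produce Folner sets: for every $r, \delta > 0$ a finite $U \subset X$ with $|\partial_r U|/|U| < \delta$. Mirroring the displacement argument in the proof of Theorem~\ref{thm:PropAComplexRelPropA}, for each fixed $g \in G$ I would show that if $y \in gU \triangle U$ then $y$ lies in a thick boundary $\partial_{R_g}U$, where $R_g$ is a displacement bound for $g$ obtained from cocompactness of the action together with the Lipschitz bound $d_X(x_0, gx_0) \leq C \ell_G(g)$ for a fixed basepoint $x_0 \in \mathcal{R}$. This yields $|gU \triangle U| \leq |\partial_{R_g}U|$. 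Enumerating $G = \{g_1, g_2, \ldots\}$ and diagonalizing, one chooses $U_n$ BW-Folner with $|\partial_{r_n} U_n|/|U_n| < 1/n$ for $r_n := 1 + \max\{R_{g_i} : 1 \leq i \leq n\}$; then $\mu_n = \chi_{U_n}/|U_n|$ satisfies $\|g\mu_n - \mu_n\|_{\ell^1} \to 0$ for every $g \in G$, as required.

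The main obstacle is the control of the displacement bound $R_g$: pointwise displacement $d(x, gx)$ need not be globally bounded on $X$ for a cocompact isometric action, as reflections on $\Z$ have unbounded displacement. One must therefore use cocompactness to reduce displacement computations to the finite set $\mathcal{R}$ and its conjugates under $G$, and choose Folner sets which are compatible with the isotropy structure near each orbit representative, so that points of large displacement are absorbed by the Folner boundary condition. This careful selection of Folner sets adapted to the $G$-action, which parallels the basepoint argument of Theorem~\ref{thm:PropAComplexRelPropA}, is the technical heart of the proof.
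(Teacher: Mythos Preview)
Your approach is genuinely different from the paper's. You work directly with F\o lner sets in $X$ and aim for a Reiter-type estimate $\|g\mu_n-\mu_n\|_{\ell^1}\to 0$ with $\mu_n=|U_n|^{-1}\chi_{U_n}$. The paper instead passes through Block--Weinberger's characterization $H^{uf}_0(X)\neq 0$: it applies Hahn--Banach to separate the open cone $\mathcal{A}=\{\phi\in\ell^\infty(X):\inf_x\phi(x)>0\}$ from $\partial C^{uf}_1(X)$, obtaining a norm-one functional $m\in(\ell^\infty(X))^*\cong(\ell^1(X))^{**}$ with $m(\phi_0)=1$ and $m|_{\partial C^{uf}_1(X)}=0$, and then argues $G$-invariance of $m$ by claiming that $g\phi-\phi\in\partial C^{uf}_1(X)$ for every $\phi\in\ell^\infty(X)$ via the chain $\psi=\sum_x a_x[x,gx]$. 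Your route is more elementary and avoids the functional-analytic machinery; the paper's route gives the invariant mean in one stroke rather than through a diagonal sequence.

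The obstacle you isolate---uniform control of the displacement $d_X(x,gx)$---is exactly where both arguments converge, and your caution is well placed. The paper simply asserts $d_X(x,gx)\le C\,\ell_G(g)$ for all $x$; your inclusion $gU\triangle U\subset\partial_{R_g}U$ needs the same bound. But this bound fails for general cocompact isometric actions: the infinite dihedral group acting on $\Z$ is cocompact with stabilizer $\Z/2$, yet the reflection $s:n\mapsto -n$ has $d(n,sn)=2|n|$. In that example one can rescue your argument by taking symmetric intervals $U_n=\{-n,\dots,n\}$, which are literally $s$-invariant, confirming your instinct that F\o lner sets adapted to the action may exist; but neither your outline nor the paper's proof supplies a general mechanism for producing them. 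So the gap you flag is real, your diagnosis of it is sharper than the paper's treatment, and the resolution is not contained in either argument as written.
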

\begin{proof}
If $X$ is amenable, then $H^{uf}_0(X) \neq 0$.  We will make the identification $C^{uf}_0(X) \isom \ell^{\infty}(X)$. Let $\mathcal{A} =
\{ \phi \in \ell^\infty(X) \, | \, \exists K>0 \,\, \rm{ with } \,\ \phi(x) \geq K \, \forall x \in X \}$.
As in the proof of \cite[Theorem 3.1]{BW}, $\mathcal{A} \cap \partial C^{uf}_1(X)$ is empty and $\mathcal{A}$ is an
open convex subset of $\ell^\infty(X)$.  By the Hahn-Banach theorem there exists an $m \in (\ell^\infty(X))^* \isom (\ell^1(X))^{**}$ of norm one,
so that $m(\phi) \geq 0$ for all $\phi \in \mathcal{A}$, $m( \partial C^{uf}_1(X) ) = 0$, and $m( \phi_0 ) = 1$ where 
$\phi_0 = \sum_{x \in X} [x]$.  

For $\phi = \sum_{x \in X} a_x [x] \in \ell^\infty(G)$ and $g \in G$, define $\psi = \sum_{x \in X} a_x [x, gx]$.
As there is a constant $C$ such that $d_X( x, gx ) \leq C \ell_G(g)$, we have $\psi \in C^{uf}_1(X)$.
Moreover $\partial [x, gx] = [gx] - [x]$ so $\partial \psi = g \phi - \phi$. Thus for all $\phi \in \ell^\infty(X)$
and all $g \in G$, $g \phi - \phi \in \partial C^{uf}_1(X)$.  Therefore $m( g\phi - \phi ) = 0$.
As $g\phi$ and $\phi$ are both in $\ell^\infty(X)$, we have $(g m)( \phi) = m(\phi)$.
That is, $m \in (\ell^1(X))^{**}$ such that for all $g \in G$, $g m = m$ and $m( \phi_0 ) = 1$.
As $\phi_0 \in \ell^\infty(X)$ corresponds to $\pi \in (\ell^1(X))^*$ under the identification 
$(\ell^1(X))^* \isom \ell^\infty(X)$, we have the result.
\end{proof}

The following corollary is now clear.
\begin{cor}
Suppose the countable group $G$ acts cocompactly on a uniformly discrete metric space $X$ with bounded geometry.
If the space is amenable and there is a point $x_0 \in X$ with amenable stabilizer, then $G$ is amenable.
\end{cor}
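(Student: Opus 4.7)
The plan is to reduce, as in the proof of Corollary \ref{cor:ActionPropA}, to the single orbit of $x_0$ so that the family $\mH$ appearing in Theorem \ref{thm:RelAmenActionOnSpace} collapses to the single subgroup $\mathrm{Stab}_G(x_0)$, at which point Corollary \ref{cor:RelAmenabAndAmenabGivesAmenab} finishes the argument.

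First, I would replace $X$ by the orbit $Y := G x_0 \subset X$ with the restricted metric $d_X$. Since $G$ acts cocompactly on $X$, $Y$ is a net in $X$, so the inclusion $Y \hookrightarrow X$ is a coarse equivalence of uniformly discrete bounded geometry metric spaces. Amenability in the sense of Block-Weinberger is a coarse invariant among such spaces (it depends only on the uniformly finite homology $H^{uf}_0$, which is a coarse invariant); thus from amenability of $X$ we deduce amenability of $Y$.

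Next, $G$ acts transitively, and in particular cocompactly, on $Y$ with $\mathcal{R} = \{ x_0 \}$ as a family of orbit representatives. Theorem \ref{thm:RelAmenActionOnSpace} applied to $Y$ and this choice of $\mathcal{R}$ yields that $G$ is relatively amenable with respect to $\mH = \{ \mathrm{Stab}_G(x_0) \}$. Since $\mathrm{Stab}_G(x_0)$ is amenable by hypothesis, Corollary \ref{cor:RelAmenabAndAmenabGivesAmenab} gives that $G$ is amenable.

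The only non-cosmetic step is the coarse invariance of amenability for uniformly discrete bounded geometry metric spaces, which is standard and which I would cite from \cite{BW}; everything else is an immediate application of the already-established results in the paper.
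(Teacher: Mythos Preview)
Your proposal is correct and is exactly the argument the paper intends: it says only ``This follows as in Corollary \ref{cor:ActionPropA}, noting that amenability of metric spaces is a coarse invariant,'' and your write-up simply spells out those steps (pass to the orbit $Gx_0$, use coarse invariance of amenability, apply Theorem \ref{thm:RelAmenActionOnSpace}, then Corollary \ref{cor:RelAmenabAndAmenabGivesAmenab}).
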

\begin{proof}
This follows as in Corollary \ref{cor:ActionPropA}, noting that amenability of metric spaces
is a coarse invariant.
\end{proof}

We recall the definition of a co-amenable subgroup, as defined by Monod and Popa in \cite{MoP}.  
\begin{defn}
A subgroup $H$ of a group $G$ is called co-amenable in $G$ if every continuous affine $G$-action on a compact convex
subset of a locally convex space with an $H$-fixed point has a $G$ fixed point.
\end{defn}
This notion of co-amenability is equivalent to the existence of a $G$-invariant mean on $\ell^{\infty}(G/H)$.
It follows that co-amenability is equivalent to relative amenability in the case of a single subgroup.  As such, the
notion of relative amenability serves to generalize co-amenability.

We have the following generalization of Proposition 3 of \cite{MoP}, which extends Theorem \ref{thm:RelAmenabCohomChar}
above.
\begin{prop}\label{prop:higherDegrees}
Let $G$ be a countable group and $\mH$ a finite family of subgroups.  $G$ is relatively amenable with respect to $\mH$ if and only if
for every $G$-Banach space $E$ and each $n \geq 0$, the map $H^n_b( \mH; E^* ) \to H^{n+1}_b( G, \mH; E^* )$ is surjective.
\end{prop}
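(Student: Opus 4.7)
The implication $(2)\Rightarrow(1)$ is immediate: specializing to $n=0$ recovers the hypothesis of Theorem \ref{thm:RelAmenabCohomChar}, which gives that $G$ is relatively amenable with respect to $\mH$. For $(1)\Rightarrow(2)$, the plan is to show that relative amenability produces a $G$-equivariant bounded splitting of the short exact sequence of bounded $\CG$-modules
\[ 0 \to E^* \to \bHom(\C G/\mH, E^*) \to \bHom(\Delta, E^*) \to 0. \]
Applying $H^*(G;-)$ to this sequence is precisely the mechanism producing the relative bounded cohomology long exact sequence recalled earlier, so a $G$-equivariant bounded splitting descends to compatible splittings in every degree; under the identifications $H^n(G; \bHom(\C G/\mH, E^*)) \isom H_b^n(\mH; E^*)$ (bounded Shapiro) and $H^n(G; \bHom(\Delta, E^*)) \isom H_b^{n+1}(G, \mH; E^*)$ this produces the required surjection $H_b^n(\mH; E^*) \to H_b^{n+1}(G, \mH; E^*)$ for every $n \geq 0$.

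To construct the splitting I would upgrade the cocycle-level section from the proof of the theorem preceding Corollary \ref{cor:cohomchar}. Since $X$ is a point, every $G$-Banach space $E$ is vacuously $\ell^1$-geometric, and by Lemma \ref{lem:InvMeanRelPropA} relative amenability supplies an invariant relative mean $\mu$ together with the associated operator $\mu_E : \ell^\infty(K, E^*) \to E^*$. For any bounded (not necessarily $G$-equivariant) linear map $\phi : \Delta \to E^*$, I define $\hat\phi(k)(k') = \phi(k-k')$ and set $(s\phi)(k) = \mu_E(\hat\phi(k))$, extended linearly to $\C G/\mH$. The estimate $\|\phi(k-k')\| \leq 2\|\phi\|$ and boundedness of $\mu_E$ give $\|s\phi\| \leq 2\|\mu_E\|\,\|\phi\|$, so $s$ is bounded.

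Two properties still need to be checked, and the crucial point is that neither uses $G$-equivariance of $\phi$. First, $s$ is itself $G$-equivariant: both $(g\cdot \hat\phi(g^{-1}k))(k')$ and $(\widehat{g\phi}(k))(k')$ simplify to $g\cdot\phi(g^{-1}k - g^{-1}k')$, so the $G$-equivariance of $\mu_E$ yields $(g(s\phi))(k) = \mu_E(g\cdot\hat\phi(g^{-1}k)) = \mu_E(\widehat{g\phi}(k)) = (s(g\phi))(k)$. Second, $s\phi$ restricts to $\phi$ on $\Delta$: the element $\hat\phi(k) - \hat\phi(k')$ is the constant function $w \mapsto \phi(k-k')$, and $\mu_E$ sends a constant $\ell^\infty$ function to its value, giving $(s\phi)(k-k') = \phi(k-k')$. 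Together these exhibit $s$ as the required $G$-equivariant bounded splitting.

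The principal obstacle is precisely this final upgrade. The $n=0$ proof was content to produce a section on the spaces of $\CG$-equivariant maps, whereas here the same formulas must be interpreted on the full ambient bounded Hom-spaces to yield a splitting of the short exact sequence of bounded $G$-modules itself; verifying this without invoking equivariance of the input $\phi$ is the only delicate check. Once that is in place, the higher-degree conclusions are formal consequences of the splitting and the relative bounded cohomology long exact sequence.
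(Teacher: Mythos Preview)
Your argument is correct but proceeds differently from the paper. The paper reduces the degree-$n$ statement to degree $1$ by a dimension-shifting trick from Monod's book: one finds a Banach $G$-module $M$ with $H^{n+1}_b(G; E^*) \cong H^1_b(G; M^*)$ and $H^{n+1}_b(\mH; E^*) \cong H^1_b(\mH; M^*)$, and then the already-established $n=0$ case, applied to $M^*$, yields the injectivity of $H^{n+1}_b(G;E^*)\to H^{n+1}_b(\mH;E^*)$, which by the long exact sequence is equivalent to the desired surjectivity. You instead observe that the averaging formula $(s\phi)(k)=\mu_E(\hat\phi(k))$ makes sense for \emph{all} bounded $\phi$, not just $G$-equivariant ones, and that the resulting $s$ is itself a bounded $G$-map; this splits the short exact sequence $0\to E^*\to \bHom(\C G/\mH,E^*)\to\bHom(\Delta,E^*)\to 0$ of bounded $G$-modules, so the induced map in $H^n_b(G;-)$ is split surjective in every degree simultaneously. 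Your route is more self-contained---it avoids the appeal to Monod's relative-injectivity machinery---and in fact establishes the slightly stronger statement that the coefficient sequence splits $G$-equivariantly. The paper's route is shorter once the dimension shift is taken as a black box, and makes transparent that the higher-degree statement is formally the degree-$1$ statement with a different coefficient module.
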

\begin{proof}
If $H^n_b( \mH; E^* ) \to H^{n+1}_b(G, \mH; E^* )$ is surjective for $n = 0$, then relative amenability follows from
Theorem \ref{thm:RelAmenabCohomChar}.  

The long exact sequence in relative bounded cohomology shows that the map $H^n_b( \mH; E^* ) \to H^{n+1}_b( G, \mH; E^* )$ is surjective
if and only if the map $H^{n+1}_b( G; E^* ) \to H^{n+1}_b( \mH; E^* )$ is injective.  By Proposition 10.3.2 and Lemma 10.3.6 of \cite{MoBook}, 
there is a Banach $G$-module $M$ such that $H^{n+1}_b(G; E^*) \isom H^{1}_b(G;M^*)$.  That a relatively injective $G$-module
can realize a relatively injective $H$-module for any subgroup $H < G$, also gives $H^{n+1}_b( \mH; E^* ) \isom H^1_b(\mH; M^* )$.
The surjectivity of $H^0_b( \mH; M^* ) \to H^{1}_b( G, \mH; M^* )$ completes the result.
\end{proof}

Bekka's notion of the amenability of unitary representations is also related to our notion of relative amenability \cite{Be}.
\begin{defn}
A unitary representation $\pi : G \to \mathcal{B}( \mathfrak{H})$ on a Hilbert space $\mathfrak{H}$ is amenable 
if there exists a state $\phi : \mathcal{B}( \mathfrak{H} ) \to \C$ such that for all $T \in \mathfrak{H}$ and all
$g \in G$, $\phi( \pi(g) T \pi(g)^* ) = \phi( T )$.
\end{defn}
The following is proved as in \cite{Be}.
\begin{prop}
Let $G$ be a countable group and $\mH$ a finite family of subgroups.  $G$ is relatively amenable with respect to $\mH$ if and only if
the quasi-regular representation $\pi : G \to \ell^2( G/\mH )$ is amenable.
\end{prop}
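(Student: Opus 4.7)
The plan is to adapt Bekka's original argument for co-amenability of a single subgroup to the multi-subgroup setting, using the fact that relative amenability with respect to $\mH$ is equivalent to the existence of a $G$-invariant mean on $\ell^\infty(G/\mH)$. Both directions hinge on the natural embedding $\ell^\infty(G/\mH) \hookrightarrow \mathcal{B}(\ell^2(G/\mH))$ as multiplication (diagonal) operators, together with the compression of a bounded operator to its diagonal.

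For the forward direction, I would begin with a $G$-invariant mean $m \in \ell^\infty(G/\mH)^*$ coming from relative amenability. Given $T \in \mathcal{B}(\ell^2(G/\mH))$, define the diagonal function $d_T \in \ell^\infty(G/\mH)$ by $d_T(x) = \langle T \delta_x, \delta_x \rangle$, where $\{\delta_x\}_{x \in G/\mH}$ is the canonical basis. Then set $\phi(T) = m(d_T)$. The map $T \mapsto d_T$ is a unital completely positive contraction, so $\phi$ is a state. The key computation is
\[ \langle \pi(g) T \pi(g)^* \delta_x, \delta_x \rangle = \langle T \pi(g)^*\delta_x, \pi(g)^*\delta_x \rangle = \langle T \delta_{g^{-1}x}, \delta_{g^{-1}x} \rangle, \]
which shows $d_{\pi(g)T\pi(g)^*} = g \cdot d_T$. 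Invariance of $m$ then gives $\phi(\pi(g) T \pi(g)^*) = \phi(T)$, so $\pi$ is amenable.

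For the converse, suppose $\phi$ is an amenable state on $\mathcal{B}(\ell^2(G/\mH))$. Restrict $\phi$ to the abelian subalgebra of multiplication operators $\{M_f : f \in \ell^\infty(G/\mH)\}$, yielding a state $m$ on $\ell^\infty(G/\mH)$ defined by $m(f) = \phi(M_f)$. Since $\pi(g) M_f \pi(g)^* = M_{g \cdot f}$ (where $(g \cdot f)(x) = f(g^{-1}x)$), the invariance of $\phi$ under conjugation by $\pi(g)$ specializes to $m(g \cdot f) = m(f)$, exhibiting $m$ as a $G$-invariant mean on $\ell^\infty(G/\mH)$, i.e. relative amenability with respect to $\mH$.

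The only mildly subtle point—and the place where care is needed—is verifying the identity $\pi(g) M_f \pi(g)^* = M_{g \cdot f}$ and the diagonal-equivariance formula under the exact conventions for the $G$-action on $G/\mH$ used throughout the paper, so that the invariance of $m$ transfers cleanly between the two statements. Once the conventions are fixed, both directions are essentially bookkeeping, and no genuine obstacle arises beyond what already appears in Bekka's single-subgroup argument, since the $G$-set $G/\mH$ plays exactly the role played there by $G/H$.
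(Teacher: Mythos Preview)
Your proposal is correct and takes exactly the approach the paper intends: the paper's proof consists solely of the remark that it ``is proved as in \cite{Be}'', and what you have written is precisely the standard Bekka argument transplanted from $G/H$ to the disjoint union $G/\mH$. The diagonal-compression state in one direction and the restriction to multiplication operators in the other are the two halves of Bekka's proof, and your verification of the equivariance identities $d_{\pi(g)T\pi(g)^*} = g\cdot d_T$ and $\pi(g)M_f\pi(g)^* = M_{g\cdot f}$ is exactly what is needed; no additional idea is required.
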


\end{document}